  \documentclass[11pt,letterpaper]{amsart}

  \usepackage[margin=3cm]{geometry}
  \usepackage{amsmath,amssymb,amsthm,amsfonts,bbding,stmaryrd,dsfont,wasysym,pifont,xspace,xcolor}
  \usepackage{parskip,fancyhdr,url}
  \usepackage{comment}
  \usepackage{fourier}
  \usepackage[
     breaklinks,colorlinks,
     citecolor=blue,linkcolor=red,urlcolor=teal
  ]{hyperref}

  \usepackage{tikz}
  \usetikzlibrary{arrows,calc,decorations.pathmorphing,backgrounds,positioning,fit}

  \usepackage{etoolbox}

  \usepackage{epstopdf,yfonts}
  \usepackage{graphicx,enumerate,epsfig} 
  \usepackage{mathrsfs}

  \newtheorem{theorem}{Theorem}[section]
  \newtheorem{proposition}[theorem]{Proposition}
  \newtheorem{corollary}[theorem]{Corollary}
  \newtheorem{lemma}[theorem]{Lemma}

  \newtheorem{introthm}{Theorem}

  \newtheorem{introcor}[introthm]{Corollary}

  \theoremstyle{definition}
  \newtheorem{definition}[theorem]{Definition}
  
  \newtheorem*{claim*}{Claim}
  
  \newtheorem{example}[theorem]{Example}
  
  \newtheorem*{question*}{Question}
  \newtheorem*{answer*}{Answer}
  \newtheorem*{application*}{Application}

  \theoremstyle{remark}
  \newtheorem{remark}[theorem]{Remark}
  \newtheorem*{remark*}{Remark}

  \newcommand{\s}{\ensuremath{S}\xspace} 
  \newcommand{\T}{\ensuremath{\mathcal T}(\s)\xspace} 
  \newcommand{\TC}{\ensuremath{\overline{\T}}\xspace} 
   
  \newcommand{\mcg}{\ensuremath{\mathcal{MCG}(\s)}\xspace}  
  \newcommand{\cc}{\ensuremath{\mathcal C}(\s)\xspace} 
  \newcommand{\curve}{\ensuremath{\mathcal S}(S)\xspace} 
   \newcommand{\MF}{\ensuremath{\mathcal{M}\!\!\mathcal{F}}\xspace} 
  \newcommand{\ML}{\ensuremath{\mathcal{M}\!\!\mathcal{L}}(S)\xspace} 
  \newcommand{\PMF}{\ensuremath{\mathcal{P}\!\!\!\! \mathcal{M}\!\! \mathcal{F}}\xspace}  
  \newcommand{\PML}{\ensuremath{\mathcal{P}\!\!\!\! \mathcal{M}\!\! \mathcal{L}}(S)\xspace}  
  
  \newcommand{\Teich}{{Teichm\"uller }} 
  \newcommand{\cat}{\ensuremath{\text{CAT}(0)}\xspace}
  
  \newcommand{\RR}{\ensuremath{\mathbb{R}}\xspace}
  \newcommand{\ZZ}{\ensuremath{\mathbb{Z}}\xspace}
  \newcommand{\HH}{\ensuremath{\mathbb{H}}\xspace}

  \DeclareMathOperator{\I}{i}

  \DeclareMathOperator{\Ext}{Ext}
  \DeclareMathOperator{\isom}{Isom}

  \newcommand{\param}{{\mathchoice{\mkern1mu\mbox{\raise2.2pt\hbox{$
  \centerdot$}}
  \mkern1mu}{\mkern1mu\mbox{\raise2.2pt\hbox{$\centerdot$}}\mkern1mu}{
  \mkern1.5mu\centerdot\mkern1.5mu}{\mkern1.5mu\centerdot\mkern1.5mu}}}

  \newcommand{\set}[2]{\ensuremath{\big\{ \,\, {#1} \,\,:\,\, {#2}
  \,\, \big\}}\xspace}
   
  \newcommand{\dL}{\ensuremath{d_{\text{Th}}}\xspace}
  \newcommand{\dT}{\ensuremath{d_T}\xspace}

  \fancypagestyle{plain}

\begin{document}


  \title    {$\star$ Stars at infinity in the Thurston boundary $\star$} 
  \author   {Meenakshy Jyothis}
  \address{University of Oklahoma \\
    Department of Mathematics\\
    601 Elm Avenue Room 423\\Norman, OK 73019}
\email{mjyothis@ou.edu}
  \author   {Jing Tao}
  \address{University of Oklahoma \\
    Department of Mathematics\\
    601 Elm Avenue Room 423\\Norman, OK 73019}
\email{jing@ou.edu}
 

  \begin{abstract} 
     
    We study the stars at infinity in the Thurston boundary of \Teich space for the Thurston and \Teich metrics. For the Thurston metric, we prove that for every finite-type surface, the star of a projective measured lamination is exactly its zero set, extending a theorem of Liu-Shi from closed surfaces. Moreover, the based star agrees with the star. For the \Teich metric, we show that both the based star and the star of a projective measured lamination coincide with its \emph{two-step} zero set. This set can be strictly larger than the zero set, disproving a conjecture of Karlsson.

  \end{abstract}
  
  \maketitle
  

\section{Introduction}

Let $S=S_{g,p}$ be a connected orientable surface of finite type with complexity $3g-3+p \ge 1$, and let \T denote its \Teich space. Thurston \cite{FLP79, Thu88} constructed a mapping-class-group $\mcg$--equivariant compactification $\TC = \T \sqcup \PML$, whose boundary is the space \PML of projective measured laminations. This compactification plays a central role in the geometry and dynamics of \mcg.

In \cite{Kar05}, Karlsson introduced the notion of \emph{stars at infinity} in a compactified metric space. Informally, given a boundary point $\xi$, its star $\star(\xi)$ consists of the boundary points that cannot be separated from $\xi$ by asymptotic half-spaces. There is also a based star $\star_b(\xi)$, which a priori is only a subset of $\star(\xi)$ and may depend on the choice of basepoint $b$. Half-spaces and stars are useful tools for studying dynamical properties of isometries. A boundary point $\xi$ is called \emph{hyperbolic} if $\star(\xi) = \{\xi \}$, and an isometry is \emph{strictly hyperbolic} if its limit set in the boundary consists of two distinct hyperbolic points. This notion encapsulates isometries that act with ``north-south'' dynamics; see for instance \cite[Proposition 9]{Kar05}.

In the same paper, Karlsson studied stars in the Thurston boundary when \T is equipped with the \Teich metric. For a non-zero element $\xi \in \ML$, denote by $[\xi]$ its projective class, and define the zero set of $[\xi] \in \PML$ by \[ Z \big( [\xi] \big) = \big\{ [\eta] \in \PML: i (\eta,\xi)=0 \big\},\]
where $i(\param,\param)$ denotes the geometric intersection pairing on (unprojectivized) measured laminations. Karlsson conjectured that $\star^T \big( [\xi] \big) = Z \big( [\xi] \big)$, for every $[\xi] \in \PML$. He proved this when $\xi$ is uniquely ergodic. Duchin and Fisher \cite{DF21} subsequently proved the inclusion $Z \big( [\xi] \big) \subset \star^T \big( [\xi] \big)$. In particular, when $S$ has complexity at least two, these results identify the uniquely ergodic projective measured laminations as the hyperbolic points for the \Teich metric.

Our first main theorem shows that the reverse inclusion of Karlsson’s conjecture fails in general. We say $\xi, \eta \in \ML$ \emph{do not jointly fill} $S$ if \[ \exists \lambda \in \ML-\{0\} \quad \text{such that} \quad i(\xi,\lambda)=i(\eta,\lambda)=0.\] Define the \emph{two-step zero set} of $[\xi] \in \PML$ by
\[ Z^{(2)}\big( [\xi] \big) = \Big \{ [\eta] \in \PML: \text{ $\xi$ and $\eta$ do not jointly fill $S$} \Big\}. \]

  \begin{introthm} \label{introthm:false}
    Let $\T$ be equipped with the \Teich metric. For  all $b \in \T$ and all $[\xi] \in \PML$, \[ \star_b^T \big( [\xi] \big) = \star^T \big([\xi] \big) = Z^{(2)}\big( [\xi] \big).\] 
  \end{introthm}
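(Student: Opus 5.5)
Recall Karlsson's definitions. A half-space is $H(W,C)=\{x: d(x,W)\le d(x,b)+C\}$ for $W\subset \T$ and $C\in\RR$. For $\xi$ in the boundary, $\star_b(\xi)$ is the intersection, over all neighborhoods $V$ of $\xi$ in $\TC$ and all $C$, of the closures in $\TC$ of $H(V\cap\T, C)$. The unbased $\star(\xi)$ is defined similarly, using half-spaces with varying basepoints. Since $\star_b([\xi])\subset\star([\xi])$ always holds, the statement follows from two inclusions:
\[
Z^{(2)}([\xi])\subset \star_b^T([\xi]) \quad\text{for every } b,
\qquad
\star^T([\xi])\subset Z^{(2)}([\xi]).
\]

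\textbf{Lower bound.} Let $[\eta]\in Z^{(2)}([\xi])$. Choose $\lambda\ne 0$ with $i(\xi,\lambda)=i(\eta,\lambda)=0$; we may take $\lambda$ to be a simple closed curve, or a multicurve, in the boundary of a subsurface carrying the common zero set. The aim is to exhibit, for every neighborhood $V$ of $[\xi]$ and every $C$, points $y\in V\cap\T$ and $x_n\to[\eta]$ with
\[
d_T(x_n,y)\le d_T(x_n,b)+C .
\]
The natural construction is to pinch $\lambda$. By the Duchin--Fisher inclusion $Z\subset\star^T$, we get $[\lambda]\in\star_b^T([\xi])$ and $[\eta]\in\star_b^T([\lambda])$ at the level of individual half-space estimates. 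The concatenation would then be realized through Minsky's product regions theorem. In the thin part where $\lambda$ is short, the \Teich metric is, up to additive error, the sup metric on the product of the horoball factor for $\lambda$ and the \Teich space of $S\setminus\lambda$.

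Concretely, take $y$ along a path where $\lambda$ is very short and the complementary factor converges projectively toward $[\xi]$. Take $x_n$ with $\lambda$ equally short and the complementary factor converging toward $[\eta]$. Then $d_T(x_n,y)$ is governed by the $\lambda$-factor, and the extremal length formula (Kerckhoff)
\[
d_T(x,y)=\tfrac12\log\sup_\alpha \frac{\Ext_y(\alpha)}{\Ext_x(\alpha)}
\]
can be used to compare this with $d_T(x_n,b)$, which is at least as large once the $\lambda$-pinching is accounted for. This step needs care when $\xi$ or $\eta$ has components that are only partially supported, and when $\lambda$ is not a curve. I would reduce to the curve case using the fact that the common zero set always contains a curve or a component that is a filling lamination on a subsurface, whose boundary curves work.

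\textbf{Upper bound.} Suppose $\xi$ and $\eta$ jointly fill. Then $i(\xi,\alpha)+i(\eta,\alpha)$ is bounded below by a positive constant on the unit sphere of $\ML$. I would build a separating function from extremal lengths, using Kerckhoff's formula and the Gardiner--Masur/Miyachi description of horofunctions. For $y$ near $[\xi]$, the horofunction is approximated by
\[
\tfrac12\log \sup_\alpha \frac{i(\xi',\alpha)^2}{\Ext_x(\alpha)}
\]
for some $\xi'$ close to $\xi$, up to normalization. It suffices to show that for $x_n\to[\eta]$ this quantity is at most $d_T(x_n,b)-C_n$ with $C_n\to\infty$. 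Here $x_n\to[\eta]$ means that extremal lengths on $x_n$ are asymptotically proportional to $i(\eta,\cdot)^2$, up to the distortion of Minsky's estimates, on curves intersecting $\eta$. Joint filling then makes any $\alpha$ nearly achieving the sup for $\xi'$ intersect $\eta$ essentially, so that $\alpha$ is long at $x_n$, which produces the gap. This yields a neighborhood $V$ and a constant $C$ whose half-space closure misses a neighborhood of $[\eta]$. The same estimate has to hold uniformly in the basepoint for the unbased star, which should follow because changing the basepoint shifts distances by a bounded amount.

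\textbf{Main obstacle.} The hardest step is the upper bound. The Gardiner--Masur boundary differs from the Thurston boundary, and \Teich geodesics need not converge in \PML, so convergence $x_n\to[\eta]$ gives weak control on extremal lengths of curves disjoint from $\eta$. I would first handle arbitrary sequences via Minsky's product regions theorem and subsurface projections: decompose $\eta$ into its minimal components, and use joint filling to find, in each component domain, curves crossing $\eta$ whose extremal length grows.
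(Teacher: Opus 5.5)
\textbf{Upper bound.} The direction $\star^T([\xi])\subset Z^{(2)}([\xi])$ is where your proposal has a real gap. The horofunction route does not go through as written, for four reasons.
\begin{enumerate}
\item Thurston convergence $y\to[\xi]$ does not determine the Gardiner--Masur limit, so the approximation by $\frac12\log\sup_\alpha i(\xi',\alpha)^2/\Ext_x(\alpha)$ is unjustified.
\item Horofunction approximations are uniform only on compact sets, so they cannot be evaluated at points $x_n\to[\eta]$.
\item The inequality you aim for bounds $d_T(x_n,y)$ from \emph{above}, whereas separation requires $d_T(x_n,y)-d_T(x_n,b)\to+\infty$.
\item Joint filling does not say that a curve crossing $\xi'$ must cross $\eta$. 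It only says that a lamination disjoint from one of them crosses the other.
\end{enumerate}
The missing idea is to test Kerckhoff's formula on the lamination that realizes the distance to the basepoint. Take arbitrary $x_n\to[\eta]$ and $y_n\to[\xi]$. Choose $\lambda_n$ with $\Ext_b(\lambda_n)=1$ and $\Ext_{x_n}(\lambda_n)=e^{-2d_T(b,x_n)}\to0$, and pass to a subsequence with $\lambda_n\to\lambda\neq0$. Since $\Ext\ge\ell^2/(2\pi|\chi(S)|)$ and normalized length functions converge uniformly on compacta, bounded extremal length along $x_n\to[\eta]$ forces $i(\eta,\lambda)=0$. Joint filling then gives $i(\xi,\lambda)>0$, so the same reasoning gives $\Ext_{y_n}(\lambda_n)\to\infty$. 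Hence $d_T(x_n,y_n)-d_T(x_n,b)\ge\frac12\log\Ext_{y_n}(\lambda_n)\to\infty$. The sequential criterion together with closedness of $Z^{(2)}([\xi])$ then finishes. Only ``crossing forces length'' is used, and Thurston convergence does provide that. So your worry about weak control of curves disjoint from $\eta$, and the Minsky and subsurface-projection machinery, are unnecessary.

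\textbf{Lower bound.} The direction $Z^{(2)}([\xi])\subset\star_b^T([\xi])$ also has gaps.
\begin{enumerate}
\item Your reduction to a curve $\lambda$ fails when $\xi,\eta$ are filling with the same support, for instance two ergodic measures on a non-uniquely ergodic filling lamination. Then only measures on that support are disjoint from both, and there is no boundary curve to use. The paper handles this case by approximating $[\xi],[\eta]$ by weighted multicurves $A_n,B_n$ with common support (Lenzhen--Masur), taking $\gamma_n$ to be a common component, and concluding with a diagonal (semicontinuity) argument for based stars. That same approximation-plus-diagonal step is what reduces general laminations to multicurves, and your sketch omits it.
\item Invoking Duchin--Fisher to get $[\lambda]\in\star([\xi])$ and $[\eta]\in\star([\lambda])$ does no work, because star relations are not transitive. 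For the Thurston metric, with $\xi,\eta$ intersecting curves disjoint from a curve $\lambda$, both relations hold while $[\eta]\notin\star^{\text{Th}}([\xi])$.
\item Your product-region heuristic is inverted. In a working construction the pinched factor contributes almost nothing to the distance between the two points, since they are equally pinched. It is $d_T(b,\cdot)$ that the pinched factor dominates. A fixed amount of pinching contributes only boundedly, so it cannot absorb complementary distances tending to infinity.
\end{enumerate}
What is needed is a two-scale choice. For example, set $\ell(\gamma)=e^{-n^2}$ at both points and twist by about $e^n/\ell(\alpha_i)$ along the components of $A$ and $B$. Then:
\begin{itemize}
\item the collar width $O(n^2)$ is negligible against twisted lengths of order $e^n$, so $x_n\to[A]$ and $z_n\to[B]$;
\item Dehn-twist dilatation bounds give $d_T(x_n,z_n)=O(n)$;
\item $\Ext_{z_n}(\gamma)\lesssim e^{-n^2}$ gives $d_T(b,z_n)\ge n^2/2-O(1)$.
\end{itemize}
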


  Thus the \Teich star can be strictly larger than the zero set.  For example, if $\alpha$ and $\beta$ are intersecting simple closed curves but are disjoint from a common curve $\gamma$, then $[\beta] \in \star_b^T \big( [\alpha]\big)$. We briefly sketch the proof in this case. The goal is to construct sequences in $\T$ by pinching $\gamma$ at a superexponential rate while applying exponentially growing powers of Dehn twists about $\alpha$ and $\beta$. The twisting determines the limiting projective lamination, whereas the pinching dominates the \Teich distance from $b$. This produces sequences $x_n, z_n \in \T$ converging to $[\alpha]$ and $[\beta]$, respectively, such that $x_n$ and $z_n$ eventually lie in the same half-space. This construction extends to multicurves and then by approximation to all laminations, yielding the direction $Z^{(2)}\big( [\xi] \big) \subset \star_b^T \big([\xi] \big)$. For the reverse inclusion, Kerckhoff's extremal-length formula shows that jointly filling laminations can be separated by asymptotic half-spaces.

  Another principal subject of this paper is the corresponding problem for the \emph{Thurston metric} on \T.  Walsh showed that the horofunction compactification of $\T$ equipped with the Thurston metric is naturally identified with the Thurston compactification \cite{Wal14}, making \PML the natural metric boundary for the Thurston metric.
 
  The relation between stars and zero sets for the Thurston metric was previously studied by Liu and Shi \cite{LS24}. Using Bonahon’s theory of geodesic currents, they proved that, for a closed surface of genus at least two, $\star^{\text{Th}} \big( [\xi] \big) = Z \big( [\xi] \big)$ for all $[\xi] \in \PML$, thus verifying the Thurston metric-analogue of Karlsson's conjecture. 
   
  Their proof uses geodesic currents. Because continuity of intersection pairing on currents plays a crucial role, their proof a priori only works for closed surfaces; indeed Liu and Shi left the analogous statement for punctured surfaces open.
  
  In this paper we give a different proof of Liu and Shi's main theorem, valid for all finite-type surfaces. We also prove the stronger equality with the based star $\star_b^{\text{Th}}\big( [\xi \big)$.

 \begin{introthm} \label{introthm:stars}
  Let $\T$ be equipped with the Thurston metric. For  all $b \in \T$ and all $[\xi] \in \PML$, \[ \star^{\text{Th}}_b \big( [\xi] \big) = \star^{\text{Th}} \big( [\xi] \big) = Z \big([\xi] \big).\]
\end{introthm}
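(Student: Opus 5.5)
The plan is to prove the chain
\[ Z\big([\xi]\big) \subset \star^{\text{Th}}_b\big([\xi]\big) \subset \star^{\text{Th}}\big([\xi]\big) \subset Z\big([\xi]\big). \]
The middle inclusion holds formally from Karlsson's definitions, since the based star is cut out by the half-spaces with a fixed basepoint and the star allows arbitrary ones. The proof then splits into two parts: a separation argument for the last inclusion, and an explicit construction for the first. Throughout, $\dL(x,y)=\log\sup_\alpha \ell_y(\alpha)/\ell_x(\alpha)$ is Thurston's formula. We use Walsh's identification \cite{Wal14} of the horofunction compactification with $\TC$. Under it, a sequence $x_n\to[\xi]$ gives
\[ \dL(z,x_n)-\dL(b,x_n)\to h_\xi(z)=\log\frac{\sup_\alpha i(\xi,\alpha)/\ell_z(\alpha)}{\sup_\alpha i(\xi,\alpha)/\ell_b(\alpha)}, \]
where $\alpha$ ranges over \ML. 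Since this formula is intrinsic to laminations, the argument should not need currents, and should therefore work for punctured surfaces as well.

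\textbf{Step 1: $\star^{\text{Th}}([\xi])\subset Z([\xi])$.} Suppose $i(\xi,\eta)>0$. I would show that there are neighbourhoods $U\ni[\xi]$ and $V\ni[\eta]$ in $\TC$ such that every half-space based near $U$ with any constant $C$ eventually misses the points of \T near $V$. By Walsh's convergence, which is locally uniform in $[\xi]$ because the formula above varies continuously in $\xi$, it suffices to show $h_{\xi'}(z_n)\to+\infty$ uniformly for $[\xi']$ near $[\xi]$ whenever $z_n\to[\eta]$. To see this, test the supremum on $\alpha=\eta$. We have $i(\xi',\eta)\ge c>0$ near $[\xi]$, while $z_n\to[\eta]$ forces $\ell_{z_n}(\eta)\to0$ after normalization. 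Concretely, $\ell_{z_n}(\cdot)\sim t_n\, i(\eta,\cdot)$ with $t_n\to\infty$ on curves, while $\ell_{z_n}(\eta)=o(t_n)$. I expect that proving $\ell_{z_n}(\eta)\to 0$, rather than merely $o(t_n)$, is delicate. If needed, I would instead test on a curve $\gamma$ close to $\eta$ with $i(\xi,\gamma)$ bounded below and use the ratio $i(\xi,\gamma)/\ell_{z_n}(\gamma)$ against the unbounded normalization. The basepoint-free version then follows by absorbing the change of basepoint into the constant, using $|\dL(z,b)-\dL(z,b')|\le$ const.

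\textbf{Step 2: $Z([\xi])\subset\star^{\text{Th}}_b([\xi])$.} The based star is closed, and pairs $([\xi],[\eta])$ of disjoint weighted multicurves with $i(\xi,\eta)=0$ are dense in the set of pairs with $i=0$. So by a diagonal argument it suffices to treat disjoint multicurves $\alpha,\beta$; this density is obtained by approximating both laminations inside a common train track or subsurface. For such $\alpha,\beta$ the Dehn (multi)twists commute, and I would set
\[ x_n=T_\alpha^{n^2}T_\beta^{n}b, \qquad z_n=T_\alpha^{n}T_\beta^{n^2}b. \]
Since the twist about $\alpha$ dominates, $x_n\to[\alpha]$ and $z_n\to[\beta]$. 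By commutativity and equivariance, $\dL(z_n,x_n)=\dL(T_\beta^{N}b,T_\alpha^{N}b)$ with $N=n^2-n$.

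The key estimate, which I expect to be the main technical step, is
\[ \dL\big(b,\,T_\alpha^{M}T_\beta^{K}b\big)=\log\max(M,K)+O(1), \]
and similarly $\dL(T_\beta^{N}b,T_\alpha^{N}b)\le\log N+O(1)$. I would prove it from the length comparison $\ell_{T_\alpha^M T_\beta^K b}(\gamma)\asymp \ell_b(\gamma)+M\,i(\alpha,\gamma)+K\,i(\beta,\gamma)$, taking the supremum over a finite filling set of curves. With this estimate, $\dL(z_n,x_n)-\dL(b,x_n)=O(1)$. Hence $z_n$ lies in the half-space $H(x_n,C)$ for a uniform $C$, which places $[\beta]$ in $\star_b([\alpha])$. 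Because the constants depend only on $b$ and the multicurves, the statement holds for every $b$. This completes the chain and gives the equalities claimed in Theorem~\ref{introthm:stars}.
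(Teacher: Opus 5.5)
Your chain of inclusions is the same as the paper's, but both substantive steps have genuine gaps.

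\textbf{Step 1.} Testing the supremum on $\eta$ cannot work, because $z_n\to[\eta]$ does not force $\ell_{z_n}(\eta)\to0$. If $\eta$ is a curve and $z_n=T_\eta^n b$, then $z_n\to[\eta]$ but $\ell_{z_n}(\eta)=\ell_b(\eta)$ for all $n$, so $i(\xi,\eta)/\ell_{z_n}(\eta)$ stays bounded. Your fallback with a fixed curve $\gamma$ near $\eta$ fails too. If $i(\eta,\gamma)>0$ then $\ell_{z_n}(\gamma)\to\infty$, and the normalizing term $\sup_\alpha i(\xi,\alpha)/\ell_b(\alpha)$ is a constant, not an unbounded quantity.

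What is missing is a family of test laminations that depends on $n$. They must be short on $z_n$ at scale $1/Q(z_n)$, stay in a fixed compact subset of $\ML$, and still meet $\xi$ by a definite amount. In the example above these are the curves $T_\eta^n\bar\eta$, with $i(\eta,\bar\eta)>0$, rescaled by $1/Q(z_n)$. They converge in $\ML$ to a positive multiple of $\eta$, yet their $z_n$-length is $\ell_b(\bar\eta)/Q(z_n)\to0$.

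In general such laminations come from a short marking $\mu_n$ of $z_n$. The lamination $\lambda_{n,\alpha}=\ell_{z_n}(\bar\alpha)\alpha/Q(z_n)$ has $z_n$-length at most $K_S/Q(z_n)$ by (\ref{eqn:curve_and_dual}). The estimate (\ref{eqn:LRT}) then gives compactness of these laminations and some $\alpha_n\in\mu_n$ with $i(\xi,\lambda_{n,\alpha_n})$ bounded below. This is the paper's argument with the roles of the two points exchanged; the paper uses a short marking of $x_n$, which is harmless since $i(\xi,\eta)>0$ is symmetric.

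Compactness also handles the double limit your reduction skips. Divergence of $h_{\xi'}(z_n)$, even uniformly in boundary points $[\xi']$, does not by itself control $\dL(z_n,x_n)-\dL(b,x_n)$ for interior points $x_n\to[\xi]$. Walsh's convergence is uniform only on compact subsets of $\T$, while $z_n$ leaves every compact set. You need the lower bound to come from test laminations in a compact subset of $\ML$, where $L_{x_n}\to L_\xi$ uniformly by Proposition \ref{prop:convergence}.

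\textbf{Step 2.} The based star is $\star_b([\xi])=\star([\xi],0)$. It differs from $\star([\xi])$ by the constant $C=0$, not by the basepoint; all half-spaces use $b$, and $\star$ is basepoint-independent. So an estimate $\dL(z_n,x_n)-\dL(b,x_n)=O(1)$ only places $[\beta]$ in $\star([\alpha],C)$ for some $C\ge0$, hence in the star, not the based star.

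For your sequences this constant really is nonnegative. Take $\alpha,\beta$ curves and $N=n^2-n$. Then $\dL(z_n,x_n)=\dL(b,T_\alpha^{N}T_\beta^{-N}b)=\log N+\log Q(\ell_b(\alpha)\alpha+\ell_b(\beta)\beta)+o(1)$, while $\dL(b,x_n)=2\log n+\log Q(\ell_b(\alpha)\alpha)+o(1)$. The difference therefore tends to $\log\bigl(Q(\ell_b(\alpha)\alpha+\ell_b(\beta)\beta)/Q(\ell_b(\alpha)\alpha)\bigr)\ge0$, which is in general positive.

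The underlying obstruction is that $T_\alpha^MT_\beta^K$ fixes $\beta$. Hence $\Phi_\beta(T_\alpha^MT_\beta^Kb)=\Phi_\beta(b)=0$ for all $M,K$, so twisting never makes the horofunction of $[\beta]$ negative near $[\alpha]$. Making it negative is exactly what Lemma \ref{lem:suff_star} needs with $C=0$. For that you need points $x$ near $[\xi]$ with $\ell_x(\gamma)\ge K\,i(\eta,\gamma)$ for every curve $\gamma$, with $K$ arbitrarily large.

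The paper obtains such points from the shearing coordinates $F_t=e^t\xi+K\eta$. Theorem \ref{thm:papa} gives $x_t\to[\xi]$ and $\ell_{x_t}\ge i(F_t,\cdot)\ge K\,i(\eta,\cdot)$, hence $\Phi_\eta(x_t)\le-\log\bigl(KQ(\eta)\bigr)$. This works directly for arbitrary $\xi,\eta$ with $i(\xi,\eta)=0$, so no approximation by multicurves is needed.

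Your diagonal argument has its own gap. It would need a joint semicontinuity statement such as Lemma \ref{lem:semicontinuity}, and that lemma requires strictly negative values. Closedness of $\star_b([\xi])$ in the second variable alone does not let $\xi$ vary.
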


  There are two directions, requiring different methods. To prove the inclusion $\star^{\text{Th}} \big( [\xi] \big) \subset Z \big( [\xi] \big)$, we use short markings and Walsh’s normalized length functions. For the inclusion $Z \big( [\xi] \big) \subset \star_b^{\text{Th}} \big([\xi] \big)$, we use Bonahon's shearing coordinates \cite{Bon96} and Papadopoulos's characterization of limiting behavior of paths in shearing coordinates \cite{Pap91}. 

  Theorem \ref{introthm:stars} gives an immediate characterization of the hyperbolic boundary points.

  \begin{introcor} \label{introcor:hyperbolic_points}

    Let $\T$ be equipped with the Thurston metric. If $S$ is the once-punctured torus or four-punctured sphere, then every $[\xi] \in \PML$ is hyperbolic. If $S$ has complexity at least $2$, then a point $[\xi] \in \PML $ is hyperbolic if and only if $\xi$ is a uniquely ergodic lamination.

  \end{introcor}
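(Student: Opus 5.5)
By Theorem \ref{introthm:stars}, for every $[\xi] \in \PML$ we have $\star^{\text{Th}}\big([\xi]\big) = Z\big([\xi]\big)$. So $[\xi]$ is hyperbolic exactly when $Z\big([\xi]\big) = \{[\xi]\}$. The corollary is therefore a purely topological statement about zero sets, and I would prove it in that form.

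\emph{Complexity one.} When $S$ is the once-punctured torus or the four-punctured sphere, I would use the standard identification $\ML \cong \RR^2$, under which $\PML$ is a circle parametrized by $\RR \cup \{\infty\}$ (Dehn--Thurston coordinates). In this model every nonzero measured lamination is either a weighted simple closed curve (rational slope) or a uniquely ergodic minimal filling lamination (irrational slope). The intersection number is, up to a positive constant, $|p q' - p' q|$ for slopes $p/q$ and $p'/q'$. It vanishes exactly when the two laminations are projectively equal. Hence $Z\big([\xi]\big)=\{[\xi]\}$ for every $[\xi]$, and every point is hyperbolic.

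\emph{Complexity at least two, direction ($\Leftarrow$).} Suppose $\xi$ is uniquely ergodic, which here includes being minimal and filling. Let $i(\eta,\xi)=0$. Since $\xi$ is minimal and filling, any lamination with zero intersection with $\xi$ has support equal to the support of $\xi$. Unique ergodicity then forces $\eta$ to be a multiple of $\xi$. So $Z\big([\xi]\big)=\{[\xi]\}$.

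\emph{Complexity at least two, direction ($\Rightarrow$).} I would argue by contrapositive: assume $\xi$ is not uniquely ergodic and produce $[\eta]\neq[\xi]$ with $i(\eta,\xi)=0$. There are three cases.
\begin{enumerate}
\item If the support of $\xi$ is minimal and filling but carries at least two ergodic measures, take $\eta$ to be another ergodic measure on the same support. Its projective class is different from $[\xi]$, and $i(\eta,\xi)=0$.
\item If the support of $\xi$ is not connected, or is not filling, then $\xi$ misses an essential subsurface or curve. In that case there is a simple closed curve $\gamma$ with $i(\gamma,\xi)=0$ and $[\gamma]\neq[\xi]$.
\item The remaining worry is $\xi = c\gamma$ for a single simple closed curve $\gamma$. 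Because the complexity is at least $2$, the complement $S\setminus\gamma$ contains a non-pants component. That component carries an essential non-peripheral curve $\delta\neq\gamma$ disjoint from $\gamma$, so $[\delta]\in Z\big([\xi]\big)\setminus\{[\xi]\}$.
\end{enumerate}

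The main obstacle, to the extent there is one, is organizing the case analysis in ($\Rightarrow$) cleanly. Specifically, one has to justify that a lamination whose support is not minimal-filling is disjoint from some curve other than itself. This follows from the standard decomposition of a measured lamination into minimal components and weighted curves, together with the complexity assumption, which guarantees that a complementary region or curve neighborhood contains a second essential curve.
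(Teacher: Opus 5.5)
Your proposal is correct and matches the paper's argument. The paper notes that Theorem \ref{introthm:stars} reduces hyperbolicity to $Z([\xi])=\{[\xi]\}$, and then relies on standard facts recorded in its preliminaries: in complexity one every zero set is a singleton, and in complexity at least two $\xi$ is uniquely ergodic exactly when $i(\xi,\lambda)=0$ forces $\lambda=c\xi$. Your case analysis proves that characterization, including the single-curve case via the complexity count, so nothing is missing.
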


    Using the Thurston compactification of $\T$, Thurston classified elements of \mcg as either finite order, reducible or pseudo-Anosov. The action of $\mcg$ on \T is by isometries of the Thurston metric. We have:

  \begin{introcor} \label{introcor:PA}
     
     An element $f \in \mcg$ is a strictly hyperbolic isometry of the Thurston metric on $\T$ if and only if $f$ is
     pseudo-Anosov.

  \end{introcor}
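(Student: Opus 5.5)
We derive Corollary \ref{introcor:PA} from Theorem \ref{introthm:stars} and Corollary \ref{introcor:hyperbolic_points}, combined with Thurston's classification and the standard dynamics of \mcg on $\TC$. By definition, $f$ is strictly hyperbolic exactly when its limit set in \PML consists of two distinct hyperbolic points. We take the limit set to be the accumulation set in \PML of an orbit $f^n(b)$ as $n \to \pm\infty$; this is independent of $b$ because \mcg acts by isometries and the compactification is \mcg-equivariant. We treat the three Nielsen--Thurston types separately.

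\emph{Pseudo-Anosov implies strictly hyperbolic.} Suppose $f$ is pseudo-Anosov with stable and unstable laminations $\lambda^\pm$. Thurston's theorem gives north-south dynamics on $\TC$: for every $b \in \T$ we have $f^{n}(b) \to [\lambda^+]$ and $f^{-n}(b) \to [\lambda^-]$. The limit set is therefore $\{[\lambda^+],[\lambda^-]\}$, and these two points are distinct. Both $\lambda^\pm$ are uniquely ergodic and filling, so $Z([\lambda^\pm]) = \{[\lambda^\pm]\}$. By Theorem \ref{introthm:stars}, $\star^{\text{Th}}([\lambda^\pm]) = \{[\lambda^\pm]\}$, so both points are hyperbolic. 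This argument covers low complexity as well.

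\emph{Finite order and reducible maps are not strictly hyperbolic.}
\begin{itemize}
\item If $f$ has finite order, every orbit is bounded in \T. Its limit set in \PML is then empty, hence not a pair of points.
\item If $f$ is reducible and infinite order, let $\sigma$ be its canonical reduction multicurve. Any accumulation point $[\eta]$ of $f^n(b)$ is $f$-invariant. Let $c$ be a component of $\sigma$; in complexity at least $2$ the multicurve $\sigma$ is non-filling, so I expect such $[\eta]$ to satisfy $i(\eta, c) = 0$. Then $[c] \in Z([\eta]) = \star^{\text{Th}}([\eta])$.
\begin{itemize}
\item If $[\eta] \neq [c]$, the point $[\eta]$ is not hyperbolic.
\item If $[\eta] = [c]$, then $c$ is a simple closed curve, which is not uniquely ergodic once complexity is at least $2$. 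Alternatively, any curve disjoint from $c$ lies in its zero set. Either way $[\eta]$ is not hyperbolic.
\end{itemize}
\item In complexity one (once-punctured torus or four-punctured sphere), infinite-order reducible elements are powers of Dehn twists up to finite order. For a Dehn twist about $c$, the orbit converges to $[c]$ in both directions. The limit set is then the single point $\{[c]\}$, not two distinct points.
\end{itemize}

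\emph{Main obstacle.} The main difficulty is the claim, for reducible maps in higher complexity, that every accumulation point of the orbit has zero intersection with the reduction system. The approach I would try first is the following. Write $f$ as a partial pseudo-Anosov or twist on each complementary component. Lengths of curves in $\sigma$ stay bounded along the orbit, while lengths of curves crossing $\sigma$ or lying in active components grow. After normalizing by total length, the limit measure is then supported on the active pieces, namely the stable laminations of the components and the twist curves. All of these are disjoint from $\sigma$, which forces $i(\eta,\sigma)=0$.

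Once that claim is in place, Corollary \ref{introcor:hyperbolic_points} lets us conclude directly. A limit point meeting a curve of $\sigma$ with zero intersection cannot be uniquely ergodic and filling, because it either misses a curve or is contained in a proper subsurface. Hence such a point is not hyperbolic.
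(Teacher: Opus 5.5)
Your proposal is correct and follows essentially the same route as the paper. In the pseudo-Anosov case the two limit points are uniquely ergodic; in the reducible case the accumulation points have zero intersection with the reduction system, so their zero sets are not singletons; complexity one is handled through powers of Dehn twists; and finite-order maps have empty limit set. The step you flag as the main obstacle is immediate, and the paper simply asserts it: since $f$ permutes the components of $\sigma$, $\ell_{f^{n_k}(b)}(c)$ takes only finitely many values while the scaling constants $r_k\to 0$, so $i(\eta,c)=\lim_k r_k\,\ell_{f^{n_k}(b)}(c)=0$. Also, your aside that each accumulation point is $f$-invariant is false in general (only a power of $f$ fixes it, e.g.\ when $f$ swaps two twist curves), but you never use it.
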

  
  Corollary \ref{introcor:PA} should be contrasted with the fact that there are reducible elements of \mcg that act on \T as \emph{hyperbolic isometries} in the
  Thurston metric, in the sense that they have positive translation length and geodesic axes \cite{LPST13, HT}.
  
  Theorem \ref{introthm:false} and \ref{introthm:stars} also identify the incidence relation induced by stars on the set of simple closed curves. Suppose $S$ has complexity at least $2$, Let $\mathcal{C}(S)$ be the curve graph of $S$. Then for any distinct curves $\alpha$ and $\beta$, \[ [\beta] \in \star^{\text{T}} \big( [\alpha] \big) \Longleftrightarrow d_{\mathcal{C}(S)}(\alpha,\beta) \le 2 \quad \text{and} \quad [\beta] \in \star^{\text{Th}} \big( [\alpha] \big) \Longleftrightarrow d_{\mathcal{C}(S)}(\alpha,\beta) = 1.\] Thus, for the Thurston metric, the graph induced by the star relation on curve vertices is precisely $\mathcal{C}(S)$. This allows us to recover the isometry group of the Thurston metric.

  \begin{introcor} \label{introcor:isometries}
    Suppose that $S$ has complexity $3g-3+p \ge 2$ and is not the twice-punctured torus. Every isometry of the Thurston metric on \T is induced by an element of the extended mapping class group of $S$.
  \end{introcor}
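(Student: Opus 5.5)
We follow the classical rigidity strategy (Royden, Ivanov, Korkmaz, Luo): show that every isometry acts on the curve complex, then invoke rigidity of the curve complex. Let $\phi$ be an isometry of $(\T,\dL)$.

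\textbf{Step 1: boundary extension.} By Walsh's theorem \cite{Wal14}, the horofunction compactification of $(\T,\dL)$ is naturally identified with $\TC$. Isometries always extend to homeomorphisms of the horofunction compactification. Hence $\phi$ extends to a homeomorphism $\bar\phi$ of $\PML$.

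\textbf{Step 2: preservation of stars.} Asymptotic half-spaces, and therefore stars, are defined purely in terms of the metric and the compactification. So $\bar\phi(\star^{\text{Th}}([\xi]))=\star^{\text{Th}}(\bar\phi[\xi])$. By Theorem \ref{introthm:stars}, $\bar\phi$ therefore preserves zero sets: $\bar\phi(Z([\xi]))=Z(\bar\phi[\xi])$.

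\textbf{Step 3: simple closed curves are preserved.} This is the step I expect to be the main obstacle. We must show that $\bar\phi$ maps simple closed curves to simple closed curves, using only the zero-set structure. For a simple closed curve $\alpha$, $Z([\alpha])$ is the projectivization of measured laminations supported on $\alpha$ together with $S\setminus\alpha$. Its topological dimension is $\dim\PML-1$, the maximum possible.

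Dimension alone does not single out curves: multicurves and laminations disjoint from a curve give smaller zero sets, but a non-curve $\xi$ with $\xi\neq\alpha$ could have $Z(\xi)$ of the same dimension only if its support contains a subsurface of complexity one less. I would instead use the ordering by inclusion. Zero sets are ordered by inclusion, and the correspondence $[\xi]\mapsto Z([\xi])$ is invariant under $\bar\phi$. I propose to characterize simple closed curves as those $[\xi]$ for which $Z([\xi])$ is maximal under inclusion among zero sets, and for which $\{[\eta]: Z([\eta])=Z([\xi])\}=\{[\xi]\}$.

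Indeed, if $\xi$ has a non-curve component, then $Z([\xi])\subset Z([\alpha])$ for some curve $\alpha$ in the boundary of the supporting subsurface of that component. Equality can occur only for exceptional configurations. The twice-punctured torus, which is excluded, is exactly where such coincidences break the argument. To make this rigorous, I will appeal to invariance of domain to compare dimensions and check the finitely many low-complexity cases by hand.

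\textbf{Step 4: disjointness and rigidity.} Once $\bar\phi$ permutes curves, Theorem \ref{introthm:stars} shows that $[\beta]\in\star^{\text{Th}}([\alpha])$ if and only if $i(\alpha,\beta)=0$. So $\bar\phi$ induces an automorphism of $\mathcal{C}(S)$. By Ivanov, Korkmaz and Luo, this automorphism is induced by some $f$ in the extended mapping class group; the hypotheses exclude exactly the twice-punctured torus and complexity one. Then $g=f^{-1}\circ\phi$ is an isometry whose extension fixes every curve in $\PML$. By density of curves, the extension of $g$ is the identity on $\PML$.

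\textbf{Step 5: conclusion.} It remains to show that an isometry acting trivially on $\PML$ is the identity. For $x\in\T$, the Busemann-type functions coming from Walsh's description are $\log\sup_\alpha \ell_y(\alpha)/\ell_x(\alpha)$. Applying $g$, this gives $\ell_{g(x)}(\alpha)/\ell_x(\alpha)$ constant in $\alpha$. Comparing the horofunction at $[\alpha]$ based at $x$ and at $g(x)$ then forces $\ell_{g(x)}=\ell_x$ on all curves, so $g(x)=x$ by marked length spectrum rigidity.
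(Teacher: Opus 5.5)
Steps 1, 2 and 4 match the paper's proof. Step 3 has a genuine gap, because the characterization of curves you propose is false. Suppose $\xi$ is uniquely ergodic, hence filling. Then $Z([\xi])=\{[\xi]\}$. Any zero set $Z([\eta])$ containing $[\xi]$ has $i(\eta,\xi)=0$, which forces $[\eta]=[\xi]$. So $Z([\xi])$ is maximal under inclusion and is realized only by $[\xi]$, exactly as for a curve. Your criterion therefore cannot stop $\bar\phi$ from exchanging curves with uniquely ergodic points.

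Your supporting claim, that $Z([\xi])\subset Z([\alpha])$ for a boundary curve $\alpha$ of the supporting subsurface, misses this case: a component filling all of $S$ has no boundary curve. One could repair the criterion by also requiring that $Z([\xi])$ not be a singleton. This works in complexity $\ge 2$, because a non-uniquely-ergodic filling lamination fails the uniqueness condition.

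Relatedly, the twice-punctured torus is not excluded because of coincidences among zero sets. It is needed only in Step 4, because there $\mathrm{Aut}(\mathcal{C}(S))$ is not induced by the extended mapping class group.

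Your reason for abandoning dimension is also unfounded, and dimension is exactly what the paper uses. In complexity $\ge 2$, $Z([\xi])$ has codimension one in \PML if and only if $\xi$ is a simple closed curve. To see this, suppose $\xi$ has a minimal non-curve component filling a subsurface $Y$ of complexity $c\ge 1$, with $k$ non-peripheral boundary classes. Then every $\eta$ with $i(\eta,\xi)=0$ decomposes into three pieces:
\begin{itemize}
\item a measure on that component, lying in a cone of dimension at most $c$, since the component carries at most $c$ ergodic measures;
\item weights on the $k$ boundary curves;
\item a lamination on $S\setminus Y$, in a space of dimension $2(3g-3+p-c-k)$.
\end{itemize}
This gives codimension at least $c+k\ge 2$; when $Y=S$ one has $k=0$ but $c=3g-3+p\ge 2$. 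A multicurve with at least two components has codimension at least $2$. Since $\bar\phi$ is a homeomorphism carrying zero sets to zero sets, it preserves their topological dimension and hence preserves curves.

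Finally, Step 5 asserts without argument that triviality on \PML forces $\ell_{g(x)}(\alpha)/\ell_x(\alpha)$ to be constant in $\alpha$. Invariance of the horofunctions only tells you that $\log\sup_\mu i(\eta,\mu)/\ell_{g^{-1}x}(\mu)-\log\sup_\mu i(\eta,\mu)/\ell_x(\mu)$ is independent of $x$ for each $\eta$. That is a statement about these suprema, not about individual lengths. The paper instead invokes the argument of Walsh's Lemma~7 at this point.
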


  Corollary \ref{introcor:isometries} was first proved by Walsh \cite{Wal14} using the horofunction compactification of \T. The conclusion of the corollary also holds for the once- and twice-punctured tori, as well as the four-punctured sphere; see \cite{DLRT20,Pan23}. These exceptional surfaces are excluded from our argument because it relies on the fact that every automorphism of $\mathcal{C}(S)$ is induced by an element of the extended mapping class group.
  
  The paper is organized as follows. Section \ref{sec:background} reviews horofunction compactification, measured laminations, the Thurston compactification, and shearing coordinates. Section \ref{sec:stars} recalls stars at infinity and establishes some useful criteria. In Section \ref{sec:ThurstonMetric}, we review the Thurston metric and prove Theorem \ref{introthm:stars}. In Section \ref{sec:Teich}, we review the \Teich metric and prove Theorem \ref{introthm:false}.

  \subsection*{Acknowledgements and AI Use} 
  
  The authors acknowledge ChatGPT for suggesting relevant references and assisting in improving the exposition and presentation of some proofs. The authors take full responsibility for the mathematical content and the final text. The authors gratefully acknowledge support from an AMS-Simons Travel Grant and NSF grant DMS-2304920.
  
\section{Preliminaries} 

  \label{sec:background}
  
  We recall the background needed below; see \cite{FLP79, CB88, Thu88, Bon88, Bus92} for further details on hyperbolic geometry, \Teich theory, and measured laminations and foliations. 

  \subsection{Horofunction compactification.} 
  
  The notion of horofunction compactification of a metric space was first introduced by Gromov in \cite{Gro81}; see also \cite{BGS85,Bal95}. 

  Let $(M,d)$ be a metric space, and let $\bar d(x,y)=d(x,y)+d(y,x)$ be the \emph{symmetrization}. Throughout this paper, we use $\bar d$ to define the topology, completeness, and properness of $M$. We say $M$ satisfies the Busemann axiom if for all $x$ and all sequences $x_n$ in $M$, $d(x_n,x) \to 0$ if and only if $d(x,x_n) \to 0$. 



  Let $C \big(M, \RR \big)$ be the space of continuous real-valued  functions on $M$ equipped with the topology of uniform convergence on compact sets. Fix a basepoint $b \in M$. For any $z \in M$, define $\Phi_z : M \to \RR$ to be \[ \Phi_z(x) = d(x,z) - d(b,z).\] This defines a map $\Phi: M \to C \big(M, \RR \big)$ by $z \mapsto \Phi_z$. 

  \begin{lemma}
    Let $(M,d)$ be a metric space and $b \in M$. The following statements hold:
    \begin{itemize}
    \item For all $z \in M$, $\Phi_z$ is $1$--Lipschitz with respect to $\bar d$; namely, for all $x,y \in M$, \[ |\Phi_z(x) - \Phi_z(y)| \le \max \{ d(x,y), d(y,x) \} \le \bar{d}(x,y). \] 
    \item The map $\Phi: M \to C(M,\RR)$ is injective and continuous, and the closure $\overline{\Phi(M)}$ of $\Phi(M)$ in $C(M,\RR)$ is compact, and its topology is independent of the basepoint $b \in M$.
    \end{itemize}
  \end{lemma}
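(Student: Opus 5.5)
The plan is to verify the four assertions (Lipschitz bound, injectivity, continuity, compactness with basepoint independence) directly from the triangle inequality for the possibly asymmetric metric $d$. Throughout I use only that $d(x,y)\ge 0$, that $d(x,y)=0$ if and only if $x=y$, and the triangle inequality; no symmetry is assumed.

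For the Lipschitz estimate, I fix $x,y,z$ and apply the triangle inequality twice. The inequality $d(x,z)\le d(x,y)+d(y,z)$ gives $\Phi_z(x)-\Phi_z(y)\le d(x,y)$, and symmetrically $\Phi_z(y)-\Phi_z(x)\le d(y,x)$. Together these give $|\Phi_z(x)-\Phi_z(y)|\le \max\{d(x,y),d(y,x)\}\le \bar d(x,y)$; the basepoint term $d(b,z)$ cancels.

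For injectivity, suppose $\Phi_z=\Phi_w$.
\begin{itemize}
\item Evaluating at $x=z$ gives $-d(b,z)=d(z,w)-d(b,w)$.
\item Evaluating at $x=w$ gives $d(w,z)-d(b,z)=-d(b,w)$.
\end{itemize}
Subtracting the second identity from the first yields $d(z,w)+d(w,z)=0$, hence $\bar d(z,w)=0$ and $z=w$.

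For continuity of $\Phi$, I will bound uniformly in $x$. By the triangle inequality,
\[
|\Phi_z(x)-\Phi_{z'}(x)|\le |d(x,z)-d(x,z')|+|d(b,z)-d(b,z')|\le 2\max\{d(z,z'),d(z',z)\}\le 2\bar d(z,z').
\]
So $z\mapsto\Phi_z$ is even continuous for the sup norm, which is stronger than uniform convergence on compact sets.

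For compactness I will use Arzelà--Ascoli for $C(M,\RR)$ with the compact-open topology. The family $\Phi(M)$ is equicontinuous, since all its members are $1$--Lipschitz with respect to $\bar d$ by the first step. It is also pointwise bounded: taking $y=b$ there gives $|\Phi_z(x)|=|\Phi_z(x)-\Phi_z(b)|\le \bar d(x,b)$, independently of $z$. Hence $\Phi(M)$ is relatively compact, so $\overline{\Phi(M)}$ is compact. The only point needing care is that this version of Arzelà--Ascoli applies for uniform convergence on compacta over an arbitrary metric domain. It does: on each compact $K\subset M$ the restrictions form a relatively compact subset of $C(K,\RR)$, and a diagonal/Tychonoff argument assembles these. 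I expect this to be the one place where some bookkeeping is required, but it is standard.

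For basepoint independence, write $\Phi^b$ and $\Phi^{b'}$ for the maps defined using basepoints $b$ and $b'$. A direct computation gives $\Phi^{b'}_z=\Phi^b_z-\Phi^b_z(b')$. The map $T(f)=f-f(b')$ is a homeomorphism of $C(M,\RR)$: it is continuous because evaluation at a point is continuous, and its inverse is $g\mapsto g-g(b)$. Moreover $T$ carries $\Phi^b(M)$ onto $\Phi^{b'}(M)$, so it carries the closure $\overline{\Phi^b(M)}$ homeomorphically onto $\overline{\Phi^{b'}(M)}$. This shows the resulting compactification does not depend on the choice of basepoint.
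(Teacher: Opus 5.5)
The paper states this lemma as standard background and gives no proof, so there is no paper route to compare with. Your direct verification is the usual one. The Lipschitz bound, injectivity, continuity (indeed $\sup_x|\Phi_z(x)-\Phi_{z'}(x)|\le 2\bar d(z,z')$) and the pointwise bound $|\Phi_z(x)|\le \bar d(x,b)$ are all correct.

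One claim in your last step is false as written. $T(f)=f-f(b')$ is \emph{not} a homeomorphism of $C(M,\RR)$, because it kills constants: $T(f+c)=T(f)$. Nor is $g\mapsto g-g(b)$ its inverse, since it sends $T(f)$ to $f-f(b)$ rather than to $f$. The fix is to restrict to the subspaces
\[
C_b=\{f\in C(M,\RR): f(b)=0\},\qquad C_{b'}=\{f\in C(M,\RR): f(b')=0\}.
\]
These are closed, since evaluation at a point is continuous for the compact-open topology. Because $\Phi^b_z(b)=0$, you have $\Phi^b(M)\subset C_b$, hence $\overline{\Phi^b(M)}\subset C_b$, and likewise for $b'$. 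The maps $T\colon C_b\to C_{b'}$ and $S(g)=g-g(b)\colon C_{b'}\to C_b$ are continuous and mutually inverse, and $T\circ\Phi^b=\Phi^{b'}$. So $T$ carries $\overline{\Phi^b(M)}$ homeomorphically onto $\overline{\Phi^{b'}(M)}$, compatibly with the two embeddings of $M$. With that change your argument proves the lemma.

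A smaller point on compactness. A diagonal argument needs a countable exhaustion of $M$ by compact sets, which an arbitrary metric space need not have; the proper spaces the paper actually uses do have one. In the stated generality, use the Tychonoff form of Ascoli instead:
\begin{enumerate}
\item By your pointwise bound, the pointwise closure of $\Phi(M)$ in $\RR^M$ is compact.
\item Every function in it is $1$-Lipschitz.
\item On an equicontinuous family, the pointwise and compact-open topologies coincide.
\end{enumerate}
Hence that set is compact in $C(M,\RR)$, and it contains $\overline{\Phi(M)}$ as a closed subset.
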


   When $\Phi: M \to C(M,\RR)$ is a topological embedding, we will identify $M$ with its image in $C(M,\RR)$, and call its closure $\overline{M}$ in $C(M,\RR)$ the  \emph{horofunction compactification} of $M$, and $\partial M = \overline{M} \setminus M$ the \emph{horofunction boundary} of $M$. An element of $\partial M$ is called a \emph{horofunction}. 
   
   \begin{proposition} \label{prop:assumptions}
   If $(M,d)$ is a proper geodesic metric space satisfying the Busemann axiom, then $\Phi: M \to C(M,\RR)$ is a topological embedding, $\overline{M}$ and $\partial M$ are compact metrizable, and any isometry of $M$ extends continuously to a homeomorphism of $\overline{M}$.
   \end{proposition}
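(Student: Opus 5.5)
We argue directly from the lemma on $\Phi$: for every $z\in M$ the function $\Phi_z$ is $1$--Lipschitz for $\bar d$ and $\Phi_z(b)=0$. The proof has four steps.

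\emph{Step 1: compactness and metrizability.} The family $\Phi(M)$ is equicontinuous and pointwise bounded, since $|\Phi_z(x)|\le \bar d(x,b)$. By Arzel\`a--Ascoli, its closure in $C(M,\RR)$ is compact. Because $M$ is proper, it is $\sigma$--compact and separable. Hence the compact-open topology on any equicontinuous family is metrizable, for instance by $\sum_k 2^{-k}\min\{1,\sup_{B_k}|f-g|\}$ with $B_k$ an exhaustion by closed balls. So $\overline{M}$ is compact metrizable, and $\partial M$, being closed in it, is too once we know $\Phi(M)$ is open in its closure (Step 2).

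\emph{Step 2: $\Phi$ is a topological embedding.} Injectivity and continuity are given by the lemma. It remains to show that if $\Phi_{z_n}\to\Phi_z$ uniformly on compacta, then $\bar d(z_n,z)\to0$.
\begin{itemize}
\item First suppose $\bar d(z_n,b)\to\infty$ along a subsequence. Use geodesics from $z$ to $z_n$ and pick points $y_n$ on them at a fixed distance $R$ from $z$ in the appropriate direction. By properness, pass to a limit $y$ of the $y_n$. Then $\Phi_{z_n}(y_n)-\Phi_{z_n}(z)=-R$, while the limiting function satisfies $\Phi_z(y)-\Phi_z(z)=d(y,z)\ge 0$. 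For $R>0$ this is a contradiction.
\item Hence the $z_n$ stay in a compact set, and any limit point $z'$ satisfies $\Phi_{z'}=\Phi_z$. By injectivity, $z'=z$.
\end{itemize}
The Busemann axiom is what makes $d$--convergence and $\bar d$--convergence agree, so that the limit statements above make sense in the $\bar d$ topology. The same argument shows that limits of unbounded sequences are not in $\Phi(M)$, so $\Phi(M)$ is open in $\overline{M}$.

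\emph{Step 3: isometries extend.} Let $g$ be an isometry. Then $\Phi_{gz}(x)=\Phi^{g^{-1}b}_z(g^{-1}x)$, where the superscript denotes the change of basepoint. Changing basepoint is the homeomorphism $f\mapsto f-f(b')$, and precomposing with $g^{-1}$ is continuous in the compact-open topology. So $g$ acts continuously on $C(M,\RR)$ preserving $\Phi(M)$, hence acts on the closure, with inverse induced by $g^{-1}$.

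The main obstacle is Step 2 for an asymmetric $d$. We must choose the geodesic direction so that $\Phi_{z_n}$ decreases by exactly $R$ along it, and use the Busemann axiom to pass from $\bar d$--limits to $d$--limits.
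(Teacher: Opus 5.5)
The paper gives no proof of this proposition. It is quoted as background from the horofunction literature (Gromov, Ballmann, and Walsh's version for asymmetric metrics), so there is no paper argument to compare yours with. Your outline follows the standard argument, and Steps 1 and 3 are fine. In Step 3, note that $T_{g^{-1}}\circ T_g$ is $f\mapsto f-f(b)$; this is the identity on $\overline M$ because every element of $\overline M$ vanishes at $b$. Openness of $\Phi(M)$ in $\overline M$ also follows at once from the embedding, since a locally compact dense subspace of a Hausdorff space is open. That is cleaner than re-running the Step 2 argument with a diagonal extraction.

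The step that is not justified as written is where you pass to a limit $y$ of the $y_n$ in Step 2, which is exactly where asymmetry matters.

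First, properness refers to $\bar d$, and $d(z,y_n)=R$ controls only one direction. You need $\sup_n d(y_n,z)<\infty$, and for a general $R>0$ nothing in your argument gives this. This is where the Busemann axiom is actually used. Applied at the point $z$, it implies
\[
\sup\{d(y,z) : d(z,y)\le R\}\to 0 \quad\text{as } R\to 0.
\]
So you must fix $R>0$ small, not arbitrary. Then every $y_n$ lies in a closed $\bar d$-ball about $z$, which is compact.

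Second, $\bar d(z_n,b)\to\infty$ does not by itself force $d(z,z_n)\ge R$, since only one of $d(z,z_n)$ and $d(z_n,z)$ needs to blow up. The Busemann axiom again rescues you: $d(z,z_n)$ cannot tend to $0$ along a subsequence, because then $\bar d(z_n,z)$ would too.

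With $R$ chosen small, the contradiction $-R=\Phi_z(y)-\Phi_z(z)=d(y,z)\ge 0$ is valid. In fact the same argument, with no reference to unboundedness, shows that $\Phi_{z_n}\to\Phi_z$ forces $d(z,z_n)\to 0$. The Busemann axiom then gives $\bar d(z_n,z)\to 0$, so your case split is unnecessary.

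Your closing remark, that the Busemann axiom is needed to ``pass from $\bar d$-limits to $d$-limits,'' misplaces its role. The limits of $y_n$ and the equicontinuity estimates are all in $\bar d$ and need no Busemann axiom. What it supplies is that small forward balls are $\bar d$-small, and you should say so explicitly.
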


  \begin{remark}
    For asymmetric metrics, the order of the variables in the definition of $\Phi_z$ matters. We use the convention above throughout.
  \end{remark}

  \subsection*{Curves and surfaces}
  Let $S=S_{g,p}$ be a connected oriented finite-type surface with complexity $3g-3+p\ge 1$. The \Teich space \T is the space of marked hyperbolic surfaces homeomorphic to $S$. The mapping class group \mcg, the group of isotopy classes of orientation-preserving homeomorphisms of $S$, acts on \T by change of marking. Let $\curve$ denote the set of isotopy classes of essential simple closed curves. For $x\in \T$ and $\alpha, \beta \in \curve$, let $\ell_x(\alpha)$ be the hyperbolic length of the geodesic representative of $\alpha$, and let $i(\alpha,\beta)$ denote geometric intersection number. A multicurve is a collection of pairwise disjoint essential, simple closed curves. A finite collection $\gamma=\{\gamma_1,\ldots,\gamma_k\}$  \emph{fills} $S$ if $\sum_i i(\gamma_i,\alpha)>0$ for every $\alpha\in\curve$.
  
  \begin{lemma}[Collar Lemma] \label{lem:collar}
    Let $\alpha$ be a simple closed geodesic on $x \in \T$. Denote by $U_x(\alpha)$ the standard collar about $\alpha$, namely the collar of radius \[ r_x(\alpha) = \sinh^{-1}\left(\frac{1}{\sinh(\ell_x(\alpha)/2)}\right). \] The standard collars about disjoint curves are disjoint. Moreover, there exist $C_0,\epsilon_0>0$ such that if $\ell_x(\alpha)\le\epsilon_0$, then each boundary component of $U_x(\alpha)$ has length at most $C_0$. 
  \end{lemma}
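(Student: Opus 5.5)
The statement to prove is the Collar Lemma (Lemma~\ref{lem:collar}). It has two assertions: disjointness of the standard collars about disjoint geodesics, and a uniform bound on the boundary lengths of collars about short geodesics. I would work in the universal cover $\HH$ and use Fermi coordinates about each geodesic.

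\textbf{Embeddedness of a single collar.} First I would show that $U_x(\alpha)$ is an embedded annulus. Let $\tilde\alpha$ be a lift of $\alpha$ with stabilizer generated by a hyperbolic element $g$ of translation length $\ell=\ell_x(\alpha)$. Embeddedness fails only if some other lift $h\tilde\alpha$ of $\alpha$ (with $h\notin\langle g\rangle$) comes within distance $2r$ of $\tilde\alpha$. Because $\alpha$ is simple, distinct lifts are disjoint. So consider the common perpendicular of $\tilde\alpha$ and $h\tilde\alpha$, of length $d$.

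The geodesic arc it projects to, together with $\alpha$, bounds a right-angled hexagon or pentagon in a pair of pants containing $\alpha$. I would then apply the standard hexagon relation, as in Buser's argument \cite{Bus92}. Cutting along this perpendicular and its images gives a pentagon with two right-angled corners. From this one gets $\sinh(\ell/2)\sinh(d/2)\ge 1$, i.e.\ $d\ge 2r_x(\alpha)$, and strict inequality whenever $S$ is not too degenerate.

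\textbf{Disjointness of collars about disjoint curves.} For disjoint geodesics $\alpha,\beta$, I would embed both into a pants decomposition and decompose the surface into pairs of pants. In each pair of pants $P$ with boundary lengths $\ell_1,\ell_2,\ell_3$, cut $P$ into two right-angled hexagons, and each hexagon further into pentagons via the seams. The pentagon formula $\sinh(a)\sinh(b)=\cosh(c)$, applied to the half-collar width $w_i$ from side $\ell_i/2$, shows that
\[
\sinh(\ell_i/2)\sinh(w_i)\ge 1 .
\]
Hence the half-collars of width $r(\ell_i)$ inside $P$ are pairwise disjoint and embedded. Gluing across curves gives disjoint full collars. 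Cusps are handled identically, with horocyclic neighborhoods treated as degenerate boundary, or simply ignored since the statement concerns only closed geodesics.

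\textbf{Boundary length estimate.} In Fermi coordinates the boundary curve at distance $r$ from $\alpha$ has length $\ell\cosh r$. With $\sinh r=1/\sinh(\ell/2)$ we get
\[
\ell\cosh r=\ell\sqrt{1+\sinh^{-2}(\ell/2)}=\frac{\ell\cosh(\ell/2)}{\sinh(\ell/2)}\to 2
\]
as $\ell\to0$. So for $\ell\le\epsilon_0:=1$, say, the length is bounded by some explicit $C_0$ (e.g.\ $C_0=3$). This step is a routine calculation.

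The main obstacle is the pentagon/hexagon bookkeeping in the disjointness step, in particular getting the inequality uniformly across the configurations of pants, including pants with cusps or with two boundary curves identified. Since this is classical, I would cite \cite[Ch.~4]{Bus92} for that step rather than reprove every case.
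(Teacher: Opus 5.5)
The paper gives no proof of this lemma; it imports it as standard background (Buser's collar theorem, \cite{Bus92}), so your plan is correct and is essentially what the paper relies on: cite \cite[Ch.~4]{Bus92} for embeddedness and disjointness, and compute the boundary length $\ell\cosh r=\ell\coth(\ell/2)\to 2$ in Fermi coordinates. If you write out the disjointness step, note that $\sinh(\ell_i/2)\sinh(w_i)=1$ holds by definition, so the real inputs are the pentagon bound for self-orthogeodesics together with the hexagon seam bound $\cosh c_k=\frac{\cosh(\ell_k/2)+\cosh(\ell_i/2)\cosh(\ell_j/2)}{\sinh(\ell_i/2)\sinh(\ell_j/2)}\ge\cosh(w_i+w_j)$; this seam bound becomes an equality when the third boundary is a cusp, so on punctured surfaces only the open collars are disjoint and cusps cannot simply be ignored, which is consistent with your hedge about strictness.
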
 
  
  Let $r=r_x(\alpha)$ be the radius of the collar $U_x(\alpha)$. Choose an orientation of $\alpha$ and identify the universal cover of $U_x(\alpha)$ with $[-r,r]\times\mathbb R$, with deck transformation acting by $ (s,t)\longmapsto (s,t+\ell_x(\alpha))$. Given a simple geodesic arc $\sigma$ joining the two boundary components of $U_x(\alpha)$, a lift of $\sigma$ has endpoints $(-r,t_-)$ and $(r,t_+)$. We define the \emph{winding number} of $\sigma$ about $\alpha$ to be \[ w(\sigma,\alpha)=\frac{t_+-t_-}{\ell_x(\alpha)}. \] A Dehn twist about $\alpha$ changes this number by one, up to sign. 
  
  \begin{lemma} \label{lem:winding}
    For every $L>0$ there is $C=C(L)$ such that, whenever $\ell_x(\alpha)\le L$ and $\sigma$ is a simple arc that joins the two boundary components of the standard collar $U_x(\alpha)$ of $\alpha$, then \[ \Big| \ell_x(\sigma) -\big(2r_x(\alpha) +|w(\sigma,\alpha)|\ell_x(\alpha)\big) \Big| \le C. \]
  \end{lemma}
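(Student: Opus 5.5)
We work in the universal cover of the collar, identified as above with $[-r,r]\times\RR$ in Fermi coordinates about the lift $\tilde\alpha=\{0\}\times\RR$, where $r=r_x(\alpha)$ and $\ell=\ell_x(\alpha)$. In these coordinates the metric is $ds^2+\cosh^2 s\,dt^2$. A lift $\tilde\sigma$ of $\sigma$ is a geodesic segment from $p_-=(-r,t_-)$ to $p_+=(r,t_+)$. Set $\Delta=|t_+-t_-|=|w(\sigma,\alpha)|\,\ell$. The plan is to compute $\ell_x(\sigma)=d_{\HH}(p_-,p_+)$ exactly and compare it with $2r+\Delta$.

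\textbf{Distance formula.} In Fermi coordinates one has
\[ \cosh d\big((s_1,t_1),(s_2,t_2)\big)=\cosh s_1\cosh s_2\cosh(t_1-t_2)-\sinh s_1\sinh s_2. \]
Applying this with $s_1=-r$, $s_2=r$ gives
\[ \cosh \ell_x(\sigma)=\cosh^2 r\,\cosh\Delta+\sinh^2 r. \]

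\textbf{Upper bound.} Since $\sinh^2 r\le \cosh^2 r$, the right side is at most $2\cosh^2 r\,\cosh\Delta$, which is at most $e^{2r+\Delta}$. Because $\cosh d\ge e^d/2$, this yields $\ell_x(\sigma)\le 2r+\Delta+\log 2$.

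\textbf{Lower bound.} Dropping the $\sinh^2 r$ term, the right side is at least $\cosh^2 r\,\cosh\Delta\ge e^{2r+\Delta}/8$. Since $\cosh d\le e^d$, this gives $\ell_x(\sigma)\ge 2r+\Delta-\log 8$. Alternatively, one can argue directly: a path from one boundary component to the other must cover radial distance $2r$, and its length is at least $2r-\log 8$ or so.

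Together these give the estimate with a universal constant $C=\log 8$, so the hypothesis $\ell_x(\alpha)\le L$ is not really needed. The $L$-dependence could only enter through a different convention for measuring winding. For instance, one might use the twist measured at the boundary of the collar, or allow endpoints that are not exactly on the boundary. In that case the offset would be at most one extra $\ell\le L$, which is absorbed into $C(L)$.

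\textbf{Main point to check.} The step that needs care is justifying that $\ell_x(\sigma)$ equals the distance between the endpoints of the lift, rather than the length of some other arc. This holds because $\sigma$ is a geodesic arc, so its lift is the unique geodesic segment between $p_-$ and $p_+$ in $\HH$. If $\sigma$ is only assumed to be a simple arc, the same computation bounds the length of its geodesic representative rel endpoints from below, which is the intended reading.
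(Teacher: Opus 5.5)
Your argument is correct. The paper gives no proof of this lemma; it quotes it as a standard collar estimate, so there is no competing route to compare against. Your exact Fermi-coordinate computation is the standard derivation. The identity
\[ \cosh \ell_x(\sigma)=\cosh^2 r\,\cosh\Delta+\sinh^2 r \]
is right. With the paper's definition of $w(\sigma,\alpha)$, your observation that $C$ can be taken universal, independent of $L$, is also right.

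There is one arithmetic slip in the upper bound. The inequality $2\cosh^2 r\,\cosh\Delta\le e^{2r+\Delta}$ is false in general: at $r=\Delta=0$ the left side is $2$. What holds is $2\cosh^2 r\,\cosh\Delta\le 2e^{2r+\Delta}$. Combined with $\cosh d\ge e^d/2$, this gives $\ell_x(\sigma)\le 2r+\Delta+\log 4$ rather than $+\log 2$. Your final constant $C=\log 8$ still covers both bounds.

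The ``alternative'' lower bound via radial distance only gives $\ell_x(\sigma)\ge 2r$ and loses the $\Delta$ term. It is not a substitute for the computation, but you do not rely on it.

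On the point you flagged: the lemma must be read for geodesic arcs. This matches the paper's definition of winding number and its use in Lemma \ref{lem:twist-length}, where it is applied to geodesic representatives rel endpoints. For an arbitrary arc in the relative homotopy class, only the lower bound survives. Since the $r$-neighborhood of $\tilde\alpha$ in $\mathbb{H}^2$ is convex, the $\mathbb{H}^2$-geodesic from $p_-$ to $p_+$ stays in the lifted collar. So computing the length as $d_{\mathbb{H}^2}(p_-,p_+)$ is consistent with identifying the universal cover of $U_x(\alpha)$ with the strip $[-r,r]\times\mathbb{R}$.
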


  \subsection*{Measured laminations}
  
  Let $\ML$ and $\PML$ denote the spaces of measured laminations and projective measured laminations. We use $|\mu|$ for the support of $\mu$. A lamination is \emph{minimal} if every half-leaf is dense in its support, \emph{filling} if each complementary component is a disk or once-punctured disk, and \emph{maximal} if every complementary region is an ideal triangle. When $S$ is punctured, a maximal lamination with no closed leaves whose leaves limit to punctures is called \emph{maximal ideal}. A measured lamination is \emph{uniquely ergodic} if its support is minimal and filling and supports a unique transverse measure up to scale. Weighted curves are dense in $\ML$, and geometric intersection extends continuously and homogeneously to 
  \[ i:\ML\times\ML\longrightarrow\mathbb R_{\ge0}. \] 
  Likewise, hyperbolic length extends continuously to 
  \[ \ell_x:\ML\longrightarrow\mathbb R_{\ge0}. \] 
  Denote by $\ML^*  = \ML \setminus \{0\}.$ When the complexity of $S$ is at least $2$, $\mu \in \ML^*$ is uniquely ergodic if and only if \[ i(\mu,\lambda)=0 \quad\Longrightarrow\quad \lambda=c\mu \quad \text{ for some } c\ge 0.\]
  
  \begin{lemma} \label{lem:non-filling}
    Let $\lambda\in\ML^*$ be non-filling. Then there is a curve $\gamma$ with $i(\lambda,\gamma)=0$ such that \[ i(\mu,\gamma)>0 \quad\Longrightarrow\quad i(\mu,\lambda)>0 \] for every $\mu\in\ML$. 
  \end{lemma}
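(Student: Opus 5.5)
Since $\lambda$ is non-filling, the plan is to take $\gamma$ to be a boundary curve of a subsurface filled by a component of $\lambda$, or a closed leaf of $\lambda$ if there is one. I will first decompose $\lambda=\sum_j \lambda_j$ into its minimal components: finitely many weighted simple closed curves $c_k$ and minimal non-closed laminations $\nu_m$. Each $\nu_m$ fills a subsurface $Y_m$, namely the smallest essential subsurface containing $|\nu_m|$. Different components have disjoint supports, and intersection is additive in each slot, so
\[ i(\mu,\lambda)=\sum_k a_k\, i(\mu,c_k)+\sum_m i(\mu,\nu_m). \]
It therefore suffices to pick a single component $\lambda_j$ and a curve $\gamma$ disjoint from all of $\lambda$ such that $i(\mu,\gamma)>0$ forces $i(\mu,\lambda_j)>0$.

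\emph{Case 1: $\lambda$ has a closed leaf $c$.} Take $\gamma=c$. Then $i(\lambda,\gamma)=0$, and $i(\mu,\gamma)>0$ gives $i(\mu,\lambda)\ge a\, i(\mu,c)>0$, where $a>0$ is the weight of $c$.

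\emph{Case 2: every component is a minimal non-closed lamination.} Pick a component $\nu=\nu_1$ with filled subsurface $Y$. Since $\lambda$ is non-filling, I claim $Y\neq S$. If $Y=S$, then $\nu$ would be filling and hence the only component, because any other component is disjoint from $|\nu|$ and so would lie in a disk or once-punctured disk, which is impossible. Then $\lambda=\nu$ would be filling, a contradiction. So $\partial Y$ contains an essential curve $\gamma$ that is not peripheral in $S$. Every leaf of $\lambda$ lies either in $Y$ or in $S\setminus Y$, so $i(\lambda,\gamma)=0$.

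The key step, and the main obstacle, is to show that $i(\mu,\gamma)>0$ implies $i(\mu,\nu)>0$. Suppose $i(\mu,\nu)=0$. Every component of $\mu$ is then disjoint from $|\nu|$, or equal to $\nu$ up to measure, which would give $i(\mu,\gamma)=0$ on that part. I will use the standard fact that a measured lamination disjoint from a minimal lamination $\nu$ filling $Y$ cannot enter $Y$ essentially. The reason is that the complementary regions of $|\nu|$ in $Y$ are ideal polygons, or once-punctured ones, together with crowns around $\partial Y$, and none of them contains an essential arc or a recurrent leaf. In fact I would argue the contrapositive directly: if $\mu$ crosses $\gamma\subset\partial Y$, then some leaf of $\mu$ enters $Y$ along an essential arc of $Y$, and because $\nu$ fills $Y$, that arc must cross $|\nu|$ transversely. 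A transverse crossing of a measured lamination with the support of $\nu$ yields positive intersection, since $\nu$ has full support. Hence $i(\mu,\lambda)\ge i(\mu,\nu)>0$.

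The care needed is in justifying that the crossing is essential and transverse. For this I would realize both laminations as geodesic laminations on a fixed hyperbolic structure. Since $\gamma$ is a geodesic boundary of $Y$ and a leaf of $\mu$ crosses it transversely, the leaf enters $Y$ and cannot stay inside a single complementary region of $\nu$. It must therefore cross leaves of $\nu$, giving $i(\mu,\nu)>0$.
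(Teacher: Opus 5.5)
Your proposal is correct and takes the same route as the paper. You take a closed leaf if there is one; otherwise you take a non-peripheral boundary curve of the proper subsurface $Y$ filled by a minimal component, and your crown/complementary-region argument supplies the justification the paper's one-line proof leaves implicit. One small ordering point: your claim that $i(\lambda,\gamma)=0$ for the other components actually follows from your key step applied to $\mu=\lambda$ (using $i(\lambda,\nu)=0$), so it belongs after that step.
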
 
  
  \begin{proof} 
    
    Choose a minimal component $\lambda_0$ of $\lambda$. If $\lambda_0$ is a curve, take $\gamma=\lambda_0$. Otherwise $\lambda_0$ fills a proper subsurface $Y\subset S$; taking $\gamma$ to be a component of $\partial Y$ gives the conclusion. \qedhere
    
  \end{proof} 

  \begin{theorem}[{\cite[Theorem C]{LM10}}] \label{thm:simultaneous}
   
   Suppose $[\xi], [\eta] \in \PML$ have the same support. Then there are weighted multicurves $A_n$ and $B_n$ with the same support such that $[A_n] \to [\xi]$ and $[B_n] \to [\eta]$.

  \end{theorem}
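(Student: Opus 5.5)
The idea is to reduce to the ergodic decomposition of the common support and approximate the ergodic measures by pairwise disjoint curves. Write $\lambda=|\xi|=|\eta|$ and decompose $\lambda$ into its minimal components $\lambda^1,\dots,\lambda^m$. Since $\xi$ and $\eta$ have the same support, each of them restricts to a nonzero measure on every $\lambda^i$. The components are pairwise disjoint, so it suffices to treat one minimal component at a time and take unions at the end. If $\lambda^i$ is a closed curve $\gamma$, then $\xi|_{\lambda^i}=a\gamma$ and $\eta|_{\lambda^i}=b\gamma$ with $a,b>0$, and we take the curve $\gamma$ itself, with weights $a$ and $b$.

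Now suppose $\lambda^i$ is minimal but not a curve, filling a subsurface $Y$. The space of transverse measures on $\lambda^i$ is a cone over a simplex whose vertices are the ergodic measures $\mu_1,\dots,\mu_k$, with $k$ bounded in terms of the topology. Write
\[ \xi|_{\lambda^i}=\sum_j a_j\mu_j \qquad\text{and}\qquad \eta|_{\lambda^i}=\sum_j b_j\mu_j, \qquad a_j,b_j\ge 0. \]
The key step is the following claim: there are pairwise disjoint simple closed curves $\gamma^n_1,\dots,\gamma^n_k$ in $Y$ and scalars $c^n_j>0$ with $c^n_j\gamma^n_j\to\mu_j$ in $\ML$ (convergence of measured laminations, not merely projective convergence). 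Granting the claim, set
\[ A_n=\sum_j \big(a_j+\tfrac1n\big)c^n_j\gamma^n_j \qquad\text{and}\qquad B_n=\sum_j \big(b_j+\tfrac1n\big)c^n_j\gamma^n_j. \]
Both are weighted multicurves with support exactly $\gamma^n_1\cup\dots\cup\gamma^n_k$, because every weight is positive; the $\tfrac1n$ perturbation handles indices where $a_j$ or $b_j$ vanishes. By continuity of addition and scaling in $\ML$, $A_n\to\xi|_{\lambda^i}$ and $B_n\to\eta|_{\lambda^i}$. Taking the union over the components $i$, whose curves lie in disjoint subsurfaces or are boundary-parallel and disjoint, gives $[A_n]\to[\xi]$ and $[B_n]\to[\eta]$.

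To prove the claim, I would fix a train track $\tau$ that fully carries $\lambda^i$ and a transversal arc $I$ to $\lambda^i$. Because the $\mu_j$ are ergodic, they are mutually singular. By the Birkhoff ergodic theorem applied to the first-return map to $I$, for each $j$ there are points of $I$ that are generic for $\mu_j$ and lie in pairwise disjoint subintervals $I_j\subset I$. Follow a long leaf segment from a generic point of $I_j$ until it returns close to its starting point in $I_j$. Closing it up by a short subarc of $I_j$ produces a curve carried by $\tau$ whose normalized weight vector on the branches of $\tau$ approximates that of $\mu_j$. The scalar $c^n_j$ is chosen as the reciprocal of the segment's length measured in returns, so that the weights converge to those of $\mu_j$.

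The main obstacle is disjointness of $\gamma^n_1,\dots,\gamma^n_k$. Closed-up segments from different $I_j$ can cross, and they can even self-intersect near the closing arc. To address this, I would first try to use a splitting sequence $\tau=\tau_0\succ\tau_1\succ\cdots$ along $\lambda^i$, in the spirit of Rauzy--Veech induction. For large $n$ the weight cone of $\tau_n$ is projectively close to the simplex of measures on $\lambda^i$, so its vertex cycles cluster near the $[\mu_j]$. I would then argue that one can choose, for each $j$, a vertex cycle (or a carried curve) of $\tau_n$ near $\mu_j$ such that these are pairwise disjoint, for instance by selecting them to run in disjoint families of branches of $\tau_n$ dictated by the singular supports of the $\mu_j$. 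If that fails, the fallback is to cut $\tau_n$ along the disjoint strips over the sets $I_j$ and close up inside each strip separately, which forces disjointness at the cost of a more careful weight estimate.
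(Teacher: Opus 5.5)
There is a genuine gap. Your reduction is sound. You split $\xi$ and $\eta$ over the minimal components of the common support, write the restrictions to a non-curve component in terms of the ergodic measures $\mu_1,\dots,\mu_k$, and add $\tfrac1n$ to the coefficients so that $A_n$ and $B_n$ have identical support. Convergence can then be checked on the intersection functions $i(\,\cdot\,,\delta)$, which are linear on compatible laminations; this is the correct substitute for ``continuity of addition,'' since addition is not globally defined. Asking for $c^n_j\gamma^n_j\to\mu_j$ in $\mathcal{ML}(S)$ rather than projectively costs nothing, since one can always rescale.

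The gap is that everything rests on your key claim: the ergodic measures of a minimal non-curve component can be approximated \emph{simultaneously by pairwise disjoint curves}. That claim is exactly the nontrivial content the paper imports from \cite{LM10}; the paper gives no proof of its own. None of your three mechanisms establishes it.
\begin{itemize}
\item Birkhoff closing produces each $\gamma^n_j$ separately and gives no control on $i(\gamma^n_j,\gamma^n_{j'})$. Every $\mu_{j'}$ has full support on $\lambda^i$, so nothing prevents the long leaf segment used for $\gamma^n_{j'}$ from crossing the closing arc of $\gamma^n_j$. Nothing makes such crossings inessential.
\item Mutual singularity is invisible at the level of branches. Any transverse measure on a minimal lamination has full support, so every $\mu_j$ puts positive weight on every branch of every $\tau_n$ that fully carries $\lambda^i$. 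There are therefore no ``disjoint families of branches dictated by the singular supports.''
\item Vertex cycles of $\tau_n$ near each $[\mu_j]$ do exist, but vertex cycles of a common track are not disjoint in general. For example, the standard track on the torus has the $(1,0)$ and $(0,1)$ curves as vertex cycles, and these intersect once. Bounded (or slowly growing) intersection between curves whose lengths tend to infinity is fully compatible with $i(\mu_j,\mu_{j'})=0$. So projective convergence to measures on a common lamination can never by itself force disjointness.
\item The fallback fails too. Strips over disjoint subintervals $I_j$ are disjoint only up to first return. Every leaf of $\lambda^i$ is dense, so a long segment starting in $I_1$ crosses the strip over $I_2$ many times, and there is nothing to cut along.
\end{itemize}

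What is missing is a mechanism that produces all the $\gamma^n_j$ at once, as components of a single multicurve, so that disjointness is automatic. One example would be a decomposition of the surface into subsurfaces with disjoint interiors, on each of which one ergodic measure dominates. You would then still need an argument that the curve chosen in the $j$-th piece converges projectively to $[\mu_j]$. Until such an argument is supplied, the key claim, and with it the proof, is incomplete.
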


  \subsection*{The Thurston compactification} 
  
  Thurston gave a compactification of $\T$ by $\PML$ such that \[ \TC = \T \sqcup \PML\] is homeomorphic to a closed ball of dimension $6g-6+2p$, which is compatible with the $\mcg$ action on $\T$ and $\PML$. The topology of the compactification is as follows. A sequence $x_n\in \T$ converges to $[\mu]\in\PML$ if and only if there are $r_n>0$ such that \[ r_n\ell_{x_n}(\alpha)\longrightarrow i(\mu,\alpha) \] for every $\alpha\in \curve$. Necessarily $r_n\to 0$. 
  
  \subsection*{Shearing coordinates} 
  
   Let $\MF(S)$ denote the space of (singular) measured foliations on $S$. There is a natural homeomorphism \[ \MF(S) \to \ML, \quad F \mapsto \lambda_F \] obtained by "straightening" the leaves of $|F|$ \cite{Lev83}. This homeomorphism preserves intersection number and descends to a homeomorphism from $\PMF(S) \to \PML$.  We will simultaneously view $\PMF(S)$ as the Thurston boundary of $\T$.

   Given a maximal geodesic lamination $\lambda$, let $\MF(\lambda)$ be the set of \emph{standard} measured foliations transverse to $\lambda$, up to isotopy relative to $\lambda$ and Whitehead moves. Standard means leaves near the punctures of $S$ are closed. Given $x \in \mathcal{T}(S)$, realize $\lambda$ geodesically on $x$. Since $\lambda$ is maximal, the complementary components of $\lambda$ are ideal triangles. Each complementary ideal triangle carries a canonical horocyclic measured foliation. These local foliations fit together across the leaves of $\lambda$ to produce a standard measured foliation $F_\lambda(x)$ transverse to $\lambda$, whose transverse data record the relative shears between complementary triangles. The resulting map
  \[\operatorname{Sh}_{\lambda}\colon \T\to \MF(\lambda),
   \qquad
   x\mapsto F_\lambda(x),
   \]
is called the $\lambda$-shearing-coordinate map.

  \begin{theorem}[\cite{Thu86, Bon96}]
  For every maximal geodesic lamination $\lambda$, the $\lambda$-shearing-coordinate map
  $
   \operatorname{Sh}_{\lambda}\colon
   \T\to \MF(\lambda)$
  is a homeomorphism.
   \end{theorem}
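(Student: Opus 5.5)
The plan is to realize $\operatorname{Sh}_\lambda$ as a continuous injective map between manifolds of the same dimension, then get surjectivity from invariance of domain, properness and connectedness. First, $\MF(\lambda)$ is a cell of dimension $6g-6+2p$: train-track coordinates for foliations transverse to $\lambda$ identify it with an open convex cone in a real vector space, cut out by positivity conditions, and the standard condition at the punctures removes the extra cusp parameters. $\T$ is also a cell of that dimension.

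\emph{Continuity.} As $x$ varies continuously, the geodesic realization of $\lambda$ varies continuously in the Hausdorff topology, and so do its complementary ideal triangles together with their canonical horocyclic foliations. Hence the transverse measures of $F_\lambda(x)$ on a fixed family of arcs transverse to $\lambda$ vary continuously. These measures are the shears between pairs of complementary triangles, and they give coordinates on $\MF(\lambda)$.

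\emph{Injectivity.} I would reconstruct $x$ from $F_\lambda(x)$ by developing into $\HH$. Fix one complementary triangle $T_0$ and lift it. For any other triangle $T$, the lift of $T$ is obtained by composing the shearing maps across the leaves separating $T_0$ from $T$. When only finitely many gaps separate them, this is a finite product. In general it is a limit of such products. Convergence comes from an exponential-decay estimate: nearby leaves of $\lambda$ are exponentially close, so the shear contributions of deep gaps, measured by $F$, contribute factors that are $O(e^{-k})$-close to the identity. This yields a developing map and holonomy depending only on $F$. Therefore $\operatorname{Sh}_\lambda(x)=\operatorname{Sh}_\lambda(x')$ forces $x=x'$. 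Invariance of domain then shows that $\operatorname{Sh}_\lambda$ is an open embedding.

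\emph{Properness.} This step closes the argument: an open embedding with closed image into a connected cell is onto. I expect it to be the main obstacle. The key estimate I would aim for is a two-sided length comparison: for each curve $\alpha$,
\[ i\big(F_\lambda(x),\alpha\big) \le \ell_x(\alpha) \le i\big(F_\lambda(x),\alpha\big)+C(\alpha,\lambda), \]
with $C$ independent of $x$. The idea is that the geodesic $\alpha$ crosses triangles essentially along horocyclic leaves, up to a bounded error per spike traversal, and the number of such traversals is controlled combinatorially by a train track carrying $\lambda$. Given this, if $x_n$ leaves every compact set of $\T$, then some $\ell_{x_n}(\alpha)\to\infty$ or some $\ell_{x_n}(\alpha)\to 0$. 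In the first case $F_\lambda(x_n)$ leaves compact sets by the right-hand inequality. The second case is harder, since pinching has to push $F_\lambda(x_n)$ to the boundary of the cone of foliations transverse to $\lambda$. There I would use the Collar Lemma~\ref{lem:collar}: the leaves of $\lambda$ crossing the long collar have large winding by Lemma~\ref{lem:winding}, and this forces $i(F_\lambda(x_n),\cdot)$ to degenerate, so the transversality positivity condition fails in the limit. The standard condition at the cusps keeps the cusp parameters from escaping.

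As a fallback for surjectivity, I would construct the structure directly from a given standard transverse $F$ by the same gluing procedure. Completeness then follows from the standard condition: the closed horocyclic leaves near each puncture give parabolic holonomy there.
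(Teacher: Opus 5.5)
The paper does not prove this statement; it quotes it from \cite{Thu86, Bon96}. Your architecture (continuity, injectivity by reconstructing the holonomy from the shears, invariance of domain, closed image) is the right shape. However, the step you flag as the crux fails as written, because your key estimate is false. There is no constant $C(\alpha,\lambda)$ with $\ell_x(\alpha)\le i(F_\lambda(x),\alpha)+C(\alpha,\lambda)$ for all $x\in\T$.

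Take $S$ closed and $\lambda$ a pants decomposition completed by spiraling leaves. Fix $x_0$, and let $x_n$ satisfy $F_\lambda(x_n)=\tfrac1n F_\lambda(x_0)$. These are the negative-time points of Thurston's stretch line through $x_0$; in any case $\MF(\lambda)$ is invariant under positive scaling and the theorem makes $\operatorname{Sh}_\lambda$ onto. The transverse measure of $F_\lambda(x)$ restricts to arclength on leaves of $\lambda$, so each pants curve $\gamma$ has $\ell_{x_n}(\gamma)=i(F_\lambda(x_n),\gamma)=\tfrac1n\ell_{x_0}(\gamma)$. By Lemma~\ref{lem:collar}, every curve $\alpha$ crossing $\gamma$ then has $\ell_{x_n}(\alpha)\ge 2\log n-O(1)$, while $i(F_\lambda(x_n),\alpha)=\tfrac1n i(F_\lambda(x_0),\alpha)\to 0$.

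So here a length tends to infinity and $x_n$ leaves every compact set of $\T$, yet $F_\lambda(x_n)\to 0$ stays in a compact subset of $\MF(S)$. The conclusion of your first case is therefore false. Properness has to detect exactly this kind of escape, namely $F_\lambda(x_n)$ converging inside $\MF(S)$ to a point outside $\MF(\lambda)$, and an additive comparison with $C$ independent of $x$ cannot see it. Consistently, Theorem~\ref{thm:papa} asserts the comparison only when $F_n$ leaves every compact subset of $\MF(S)$.

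Your second case does not repair this. Lemma~\ref{lem:winding} does not force large winding: an arc crossing a long collar is long because of the $2r_x$ term, whatever its winding. And ``the positivity condition fails in the limit'' is the statement to be proved, not an argument for it. The dichotomy is also redundant, since pinching a curve makes every curve crossing it long.

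Any correct closedness argument must use the hypothesis that the limit lies in $\MF(\lambda)$; your outline never does, and the example shows this cannot be avoided. If $F_\lambda(x_n)\to F\in\MF(\lambda)$, then $i(F,\mu)>0$ for every nonzero transverse measure $\mu$ of $\lambda$; this is the positivity condition that cuts out Bonahon's cone. Via $\ell_{x_n}(\mu)=i(F_\lambda(x_n),\mu)$, it bounds the lengths of normalized measured sublaminations of $\lambda$ away from $0$ and $\infty$. You then still need an upper bound on the lengths of a fixed filling set of curves. For instance, a version of your comparison whose constant depends on a compact set $K\subset\MF(\lambda)$ would do, after which $x_n$ stays in a compact subset of $\T$ and continuity finishes.

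Two further load-bearing claims are asserted rather than proved.
\begin{itemize}
\item That $\MF(\lambda)$ is a connected manifold of dimension $6g-6+2p$. Its description as an open convex cone in train-track (transverse cocycle) coordinates is essentially Bonahon's identification of the image of the shear map, i.e.\ part of what is being proved. Proposition~\ref{prop:transversality} gives the manifold structure as an open subset of $\MF(S)$, but the open--closed argument also needs connectedness.
\item In your fallback, that the glued structure is complete with discrete holonomy and a developing map that is a homeomorphism onto the hyperbolic plane. For laminations with uncountably many leaves, this is the substantive part of the realization argument and does not follow from the cusp condition alone.
\end{itemize}
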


  Following Thurston \cite{Thu86}, a measured lamination $\eta$ is \emph{totally transverse} to $\lambda$ if every leaf of $|\eta|$, viewed as a parametrized geodesic $\mathbb{R} \to S$, meets $\lambda$ transversely infinitely often in both directions, and every half-leaf of $\lambda$ not tending to a puncture meets $|\eta|$ infinitely often. 
  
  \begin{proposition} \label{prop:transversality}\cite[Theorem 9.4]{Thu86} 
  Let $\lambda$ be maximal. A measured foliation $F$ has a standard representative transverse to $\lambda$ if and only if its straightening $\lambda_F$ is totally transverse to $\lambda$. 
  
  \end{proposition}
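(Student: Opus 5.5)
The plan is to treat the two directions separately, working on a fixed hyperbolic surface $x \in \T$ on which $\lambda$ is realized geodesically, so that $S \setminus \lambda$ is a finite union of ideal triangles. Throughout, the dictionary between a standard measured foliation $F$ and its straightening $\lambda_F$ is the one from \cite{Lev83}: every leaf of $F$ is homotopic, with ends fixed at infinity in the universal cover, to a leaf of $|\lambda_F|$, and transverse measures are preserved.

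\textbf{($\Rightarrow$).} Suppose $F$ is standard and transverse to $\lambda$.
\begin{enumerate}
\item Lift to $\HH$. A leaf $\tilde f$ of $\tilde F$ crosses leaves of $\tilde\lambda$ transversely, and it can cross each leaf at most once. Two crossings of the same leaf would bound a bigon between a leaf of $F$ and a leaf of $\lambda$, and such a bigon cannot occur once $F$ is put in minimal position relative to $\lambda$ (this is what ``up to isotopy relative to $\lambda$'' permits).
\item Since $\tilde f$ and its straightened geodesic share endpoints at infinity, the geodesic crosses exactly the leaves of $\tilde\lambda$ that separate those two endpoints.
\item Conclude that each half-leaf of $\lambda_F$ meets $\lambda$ infinitely often. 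If it did not, a half-leaf of $f$ would eventually stay inside one ideal triangle. There the only leaves that can do this run into a spike, and the standard condition forces such leaves to be closed leaves around a puncture, not leaves of a genuine lamination.
\item Conclude that each half-leaf of $\lambda$ not tending to a puncture meets $|\lambda_F|$ infinitely often. The transverse measure of $F$ restricted to a leaf of $\lambda$ gives a nonzero measure on every long subarc. Otherwise $F$ would have a region containing a long segment of $\lambda$ with no transverse leaves, which would contradict transversality and the fact that $F$ foliates all of $S$ away from its singularities.
\end{enumerate}

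\textbf{($\Leftarrow$).} Suppose $\eta = \lambda_F$ is totally transverse to $\lambda$. The idea is to build a foliation triangle by triangle.
\begin{enumerate}
\item Realize $\eta$ geodesically as well. Each component of $|\eta| \cap T$, for $T$ a complementary ideal triangle, is a geodesic arc joining two distinct sides of $T$; a geodesic cannot return to the side it left.
\item The transverse measure of $\eta$ induces a measure $\mu_s$ on each side $s$ of $T$. The arcs of $|\eta|\cap T$ sort into three families, according to which pair of sides they join. Give the arcs joining sides $s$ and $s'$ total mass $w_{ss'}$.
\item The masses satisfy the triangle relations $\mu_s = w_{ss'} + w_{ss''}$. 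So $T$ carries a ``measured tripod'' foliation: three strips of leaves, each joining a pair of sides, with a singular region near the center. This is the standard model used in shearing coordinates.
\item Glue these models along the leaves of $\lambda$, matching transverse measures. This yields a measured foliation $F'$ transverse to $\lambda$ whose leaves are homotopic to those of $\eta$, and hence $\lambda_{F'} = \eta$.
\end{enumerate}

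\textbf{Main obstacle.} The hardest point is the behavior in the spikes of the triangles. The glued foliation must fill each spike completely, and must do so with closed leaves only near punctures. A spike on which the side measures vanish for a long stretch would leave a non-foliated region, or force leaves to enter a cusp. This is exactly where the second half of total transversality is used. Every half-leaf of $\lambda$ not tending to a puncture meets $|\eta|$ infinitely often, so in every such spike the boundary measures are nonzero arbitrarily deep, and the strips of the tripods fill the spike. Spikes tending to punctures are handled by inserting horocyclic closed leaves, which makes $F'$ standard.

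I would first carry this out for $\eta$ a weighted multicurve, where the arcs are finite in number and the gluing can be checked directly. I would then pass to general $\eta$ by approximation, using continuity of the shearing data, or else cite \cite[Theorem 9.4]{Thu86} for the limiting step.
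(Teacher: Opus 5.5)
The paper gives no proof of this statement; it is quoted from \cite{Thu86}, so your sketch has to stand on its own. In the direction $(\Leftarrow)$ it does not, and that is the direction the paper relies on (Corollary \ref{cor:unique_erg} and Proposition \ref{prop:disjoint}).

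\textbf{Main gap: the construction in $(\Leftarrow)$.} Your triangle-by-triangle bookkeeping assumes finite side masses. But the second clause of total transversality, combined with recurrence of non-cuspidal half-leaves of $\lambda$, makes the $\eta$-measure of every such half-leaf infinite. On a closed surface every $\mu_s$ is therefore infinite, so $\mu_s=w_{ss'}+w_{ss''}$ reads $\infty=\infty$ and specifies no tripod. The $\mu_s$ are also supported on the nowhere dense set $|\eta|\cap s$ (atomic when $\eta$ is a multicurve). So ``strips of leaves'' form a foliation only after you blow up atoms and collapse gaps, coherently from both sides of each leaf.

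The decisive problem is that the open ideal triangles do not cover $S$. Once $\lambda$ has a recurrent leaf (always, if $S$ is closed), $\lambda$ contains leaves that are limits of other leaves and bound no triangle. In the closed uniquely ergodic case used in the paper, these are all but finitely many leaves, and they form a Cantor set in every transversal. ``Glue these models along the leaves of $\lambda$'' does not say how $F'$ is defined on those leaves, why it is continuous across them, or why the transverse measures coming from the two sides agree there. Producing exactly this extension is the content of Thurston's theorem; geometrically, it is what the hyperbolic metric supplies for the horocyclic foliation. Even granting $F'$, ``hence $\lambda_{F'}=\eta$'' needs an argument, such as $i(F',\alpha)=i(\eta,\alpha)$ for all curves $\alpha$. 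After blowing up and collapsing, the leaves of $F'$ no longer correspond to those of $\eta$.

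\textbf{The fallback does not repair this.} For a weighted multicurve $\eta$, the arcs of $|\eta|\cap T$ are finitely many only if every leaf of $\lambda$ runs from puncture to puncture, i.e.\ $\lambda$ is an ideal triangulation of a punctured surface. There your sketch is essentially the normal-coordinate construction and can be completed. As soon as $\lambda$ has a recurrent leaf, however, a totally transverse multicurve meets $\lambda$ in an infinite nowhere dense set, and so crosses some triangle infinitely often.

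The limiting step is also unavailable. Total transversality is not a closed condition: for $\lambda$ as in Corollary \ref{cor:unique_erg}, every $\eta\notin\mathbb{R}_{\ge0}\lambda$ is totally transverse, yet such $\eta_n$ can converge to $\lambda$. So limits of foliations transverse to $\lambda$ need not be transverse without a uniform estimate. ``Continuity of the shearing data'' presupposes the identification of $\MF(\lambda)$ inside $\MF(S)$ that is being proved. Citing \cite[Theorem 9.4]{Thu86} for that step is circular.

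\textbf{Two holes in $(\Rightarrow)$.} This direction is closer to complete.
\begin{itemize}
\item Step 3 handles only spikes at punctures, but on a closed surface no spike is cuspidal. You need flow boxes of $F$ around the recurrent leaves bounding the spike to see that a leaf of $F$ cannot run down it without crossing $\lambda$.
\item Step 4 conflates positive $F$-measure on arcs of $\lambda$ with crossings of $|\lambda_F|$. To connect them you need the fact that, in the universal cover, the $\lambda_F$-measure of the leaves separating two straightened leaves equals the $F$-measure between the original leaves. You also need recurrence to make that measure infinite along the half-leaf.
\end{itemize}
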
 
  
  If $\lambda$ has minimal filling support, then every $\eta\in\ML$ with $|\eta|\neq|\lambda|$, equivalently $i(\eta,\lambda)>0$, is totally transverse to $\lambda$. Hence: 
  
  \begin{corollary} \label{cor:unique_erg}
  
    If $S$ is closed and $\lambda$ is a maximal uniquely ergodic lamination, then \[ \MF(\lambda)\cong \ML\setminus\mathbb R_{\ge0}\lambda. \] 
  \end{corollary}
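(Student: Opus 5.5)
We prove Corollary \ref{cor:unique_erg} by combining the shearing-coordinate theorem with Proposition \ref{prop:transversality}; all the work is in identifying which measured foliations admit a standard representative transverse to $\lambda$. The identification is made through the straightening homeomorphism $\MF(S)\to\ML$, $F\mapsto\lambda_F$. By Proposition \ref{prop:transversality}, a class $F\in\MF(S)$ has a standard representative transverse to $\lambda$ exactly when $\lambda_F$ is totally transverse to $\lambda$. Forgetting the transversality data, and using that isotopy rel $\lambda$ and Whitehead moves preserve the measure class, we get a natural map $\MF(\lambda)\to\MF(S)$. Composing with straightening gives a map $\MF(\lambda)\to\ML$ whose image is the set of totally transverse laminations. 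Since $S$ is closed, "standard" imposes no condition near punctures.

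\textbf{Step 1: identify the image.} Let $\lambda$ be maximal and uniquely ergodic, so $|\lambda|$ is minimal and filling.
\begin{itemize}
\item If $\eta\in\ML^*$ and $\eta\notin\RR_{\ge0}\lambda$, then unique ergodicity gives $i(\eta,\lambda)>0$. If instead $|\eta|=|\lambda|$, the only measures are multiples of the one on $\lambda$, so again $\eta\in\RR_{\ge 0}\lambda$.
\item Given $i(\eta,\lambda)>0$, the remark preceding the corollary shows $\eta$ is totally transverse. The argument: each leaf of $|\eta|$ is not a leaf of $\lambda$, and minimal filling $\lambda$ is dense enough that every leaf crosses it infinitely often in both directions. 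Conversely, every half-leaf of $\lambda$ is dense in $|\lambda|$, so it meets $|\eta|$ infinitely often.
\item Conversely, $0$ and the positive multiples of $\lambda$ are not totally transverse: they have no leaves crossing $\lambda$, or have empty support. (Here the statement implicitly regards $0$ as part of the ray $\RR_{\ge0}\lambda$ that is removed.)
\end{itemize}
Hence the image is exactly $\ML\setminus\RR_{\ge0}\lambda$.

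\textbf{Step 2: injectivity and homeomorphism.} This is the main obstacle: the forgetful map $\MF(\lambda)\to\MF(S)$ must be injective and a homeomorphism onto its image. That is, two transverse standard foliations that are Whitehead-equivalent in $\MF(S)$ must already be equivalent rel $\lambda$.

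We would cite Thurston's \cite[Theorem 9.4]{Thu86}, which is stated as a bijection: a transverse representative is unique up to isotopy rel $\lambda$ and Whitehead moves. This follows from the train-track description. Transverse foliations correspond to measures on the dual train track determined by intersections with the leaves of $\lambda$, and these intersections are recovered from $\lambda_F$.

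For continuity, both spaces carry the topology induced by intersection functions, or weights on a train track carrying $\lambda$. The weights vary continuously in each direction, so the bijection is a homeomorphism. Finally, composing with the straightening homeomorphism gives $\MF(\lambda)\cong\ML\setminus\RR_{\ge0}\lambda$.
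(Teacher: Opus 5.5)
Your proposal is correct and follows essentially the same route as the paper. The paper deduces the corollary directly from Proposition~\ref{prop:transversality}, together with the observation that since $\lambda$ is minimal and filling, every $\eta$ with $i(\eta,\lambda)>0$ (equivalently, by unique ergodicity, every $\eta\notin\mathbb{R}_{\ge 0}\lambda$) is totally transverse to $\lambda$. The paper leaves the injectivity and homeomorphism question you raise in Step~2 implicit in its citation of Thurston's Theorem~9.4, and later only the identification of the image is used, so spelling Step~2 out is a harmless addition.
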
 
  
  \begin{theorem}[\cite{Pap88,Pap91}] \label{thm:papa}
  
    Let $\lambda$ be a measurable maximal lamination when $S$ is closed, or a maximal ideal lamination when $S$ is punctured. Suppose $x_n\in\T$ has $\lambda$-shearing coordinates $F_n\in\MF(\lambda)$ and $F_n$ leaves every compact subset of $\MF(S)$. Then for every curve $\alpha$ there is $C_\alpha$ such that \[ i(F_n,\alpha) \le \ell_{x_n}(\alpha) \le i(F_n,\alpha)+C_\alpha. \] Consequently, if $[F_n]\to[F]\in\PML$, then $x_n\to[F]$ in the Thurston compactification. 
  
  \end{theorem}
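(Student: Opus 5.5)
The plan is to prove the two inequalities separately using the geometry of the horocyclic foliation $F_n = F_\lambda(x_n)$ on the hyperbolic surface $x_n$, with $\lambda$ realized geodesically. Then the convergence statement will follow formally from the Thurston topology.

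\emph{Lower bound.} I will use the following description of the transverse measure of $F_\lambda(x)$.
\begin{itemize}
\item Inside each complementary ideal triangle, the horocyclic leaves are level sets of a Busemann-type function for the nearest cusp. The transverse measure is $|dB|$, and the central non-foliated region carries zero measure.
\item Along leaves of $\lambda$, the measure is hyperbolic arc length.
\end{itemize}
Busemann functions are $1$-Lipschitz, so for any rectifiable arc $\sigma$ the $F$-measure of $\sigma$ is at most $\ell_x(\sigma)$. Apply this to the closed geodesic representative of $\alpha$ on $x_n$. Since $i(F_n,\alpha)$ is the infimum of $F_n$-measures over representatives of $\alpha$, this gives $i(F_n,\alpha)\le \ell_{x_n}(\alpha)$ for every $n$.

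\emph{Upper bound.} I will construct, for each $n$, a representative $\alpha_n$ of $\alpha$ whose length exceeds its $F_n$-measure by a bounded amount, and which is quasi-transverse to $F_n$, so that its measure equals $i(F_n,\alpha)$.

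Fix a train track neighbourhood of $\lambda$ and a representative of $\alpha$ in efficient position relative to $\lambda$. This records a finite combinatorial pattern: the sequence of complementary triangles $\alpha$ crosses and the spikes it turns through. Because $\lambda$ is fixed up to isotopy, this pattern is the same on every $x_n$; in the punctured case this uses that $\lambda$ is maximal ideal and $\alpha$ stays in a compact part of $S$.

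On $x_n$, build $\alpha_n$ as a concatenation of two kinds of pieces:
\begin{itemize}
\item segments running along leaves of $\lambda$ between horocyclic leaves, whose length equals the measure they pick up;
\item horocyclic arcs crossing the triangle spikes, which carry zero measure.
\end{itemize}
Horocyclic arcs in the foliated part of an ideal triangle have length at most $1$. The number of such arcs is bounded by the combinatorial pattern, hence by a constant $C_\alpha$ independent of $n$. This gives $\ell_{x_n}(\alpha)\le \ell(\alpha_n)\le i(F_n,\alpha)+C_\alpha$.

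The main obstacle is justifying that $\alpha_n$ realizes the minimal intersection $i(F_n,\alpha)$ with bounded combinatorics uniformly in $n$. This is where I expect to use that $F_n$ leaves every compact set. Heavy shearing forces the leaves of $F_n$ to run long distances along $\lambda$, so the optimal path crosses $\lambda$ in the fixed efficient pattern without backtracking. I would first try to prove this via Proposition \ref{prop:transversality}, which gives total transversality, together with a bigon-criterion argument in the universal cover.

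\emph{Consequence.} Suppose $[F_n]\to[F]$, so there are $r_n>0$ with $r_n i(F_n,\alpha)\to i(F,\alpha)$ for all curves $\alpha$. Since $F_n$ leaves compact subsets of $\MF(S)$ while the $r_nF_n$ converge, we get $r_n\to 0$. Multiplying the two inequalities by $r_n$ gives $r_n\ell_{x_n}(\alpha)\to i(F,\alpha)$ for every $\alpha\in\curve$. This is precisely convergence $x_n\to[F]$ in the Thurston compactification.
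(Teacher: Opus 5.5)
Your lower bound, and your deduction of $x_n\to[F]$ from the two-sided estimate (including $r_n\to 0$), are fine. The gap is the upper bound, and it is more than a missing verification.

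In the closed case, a measurable maximal lamination has no isolated leaves; for example, take the maximal uniquely ergodic $\lambda$ used in Proposition \ref{prop:disjoint}. It is minimal and filling. So $\alpha$ does have a finite pattern relative to a train track carrying $\lambda$, but every crossing of a band of $\lambda$ traverses infinitely many spikes. The length of a horocyclic path across that band is governed by the geometry of $x_n$ (the transverse width of the band and the depths of the spikes), not by the pattern. Moreover, an $F_n$-efficient representative is not determined by the position of $\alpha$ relative to $\lambda$: how far it runs along $\lambda$ and where it crosses depend on the shears, i.e.\ on $n$. Total transversality and a bigon argument are qualitative and give no constant independent of $n$.

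In fact, in the closed case no argument can produce the bound from the stated hypothesis alone.
\begin{itemize}
\item Let $\lambda$ be maximal uniquely ergodic, so $\operatorname{Sh}_\lambda$ is a homeomorphism onto $\ML\setminus\mathbb{R}_{\ge0}\lambda$ (Corollary \ref{cor:unique_erg}). Let $G_k$ be weighted curves with $G_k\to\lambda$.
\item For fixed $T>0$ we have $TG_k\to T\lambda\notin\MF(\lambda)$, so $\operatorname{Sh}_\lambda^{-1}(TG_k)$ leaves every compact subset of $\T$. Hence some curve from a fixed finite filling family has length tending to $\infty$, while $i(TG_k,\cdot)\to T\,i(\lambda,\cdot)$ stays bounded.
\item Take $T=n$ and $k=k_n$ large, and pass to a subsequence. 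Then $F_n=nG_{k_n}$ leaves every compact subset of $\MF(S)$, yet $\ell_{x_n}(\alpha)-i(F_n,\alpha)\to\infty$ for a fixed curve $\alpha$.
\end{itemize}

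So an extra hypothesis is needed, e.g.\ $F_n=t_nH_n$ with $t_n\to\infty$ and $H_n$ in a compact subset of $\MF(\lambda)$. This holds in Proposition \ref{prop:disjoint}, where $F_t=e^t(\xi+Ke^{-t}\eta)$ and $\xi+Ke^{-t}\eta\to\xi\in\MF(\lambda)$.

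Under that hypothesis, uniformity comes from Thurston's stretch maps rather than from combinatorics. The point $x_n$ is the image of $y_n=\operatorname{Sh}_\lambda^{-1}(H_n)$, which lie in a compact subset of $\T$, under the stretch map of factor $t_n$ along $\lambda$. This map multiplies arclength along $\lambda$ by $t_n$ and pushes horocyclic arcs deeper into the spikes, so it does not lengthen them. Suppose you have, uniformly over that compact set, a quasi-transverse representative of $\alpha$ on $y_n$ made of $\lambda$-segments and horocyclic segments of total horocyclic length at most $C$. Its image is then a representative on $x_n$ of length at most $i(F_n,\alpha)+C$.

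In the punctured case ($\lambda$ an ideal triangulation), $\alpha$ crosses only finitely many edges and your finite-pattern idea is viable. The efficient-position claim still has to be proved there.
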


\section{Stars at infinity}
  
  \label{sec:stars}

  We recall the definition of stars at infinity from \cite{Kar05}. Let $(M,d)$ be a complete metric space, possibly asymmetric, with basepoint $b\in M$, and let $\overline M$ be a bordification of $M$ with boundary $\partial M=\overline M\setminus M$. We define $d(x,\xi)=d(\xi,x) = \infty$ for any $x \in M$ and $\xi \in \partial M$. This is consistent with the completeness of $M$.

  \begin{definition}
     
     For $V \subset \overline{M}$ and $C \ge 0$, the
     $C$-\emph{half-space} defined by $V$ relative to $b$ is \[ H_b(V,C) =
     \set{z\in M}{d(V,z) \le d(b,z) + C},\] 
     where \[ d(V,z) = \inf_{x \in V} d(x,z).\] 
  \end{definition}
  
  \begin{definition}
    
     Let $\xi \in \partial M$ and let $V(\xi)$ be a collection of neighborhoods of $\xi$. 
     Given $C \ge 0$, define \[ \star(\xi,C) = \bigcap_{V \in V(\xi)} \overline{H_b(V,C}).\]  We also set $\star_b(\xi) = \star(\xi,0)$, called the \emph{star of}  $\xi$ \emph{based at} $b$.
     The \emph{star} of $\xi$ is defined to be 
     \[ 
        \star(\xi) = \overline{\bigcup_{C \ge 0} \bigcap_{V \in V(\xi)}
        \overline{ H_b(V,C) }}.
     \] 

  \end{definition}
  
  The set $\star(\xi)$ is independent of the basepoint $b$ \cite[Lemma 6]{Kar05} and one can choose $V(\xi)$ to be a fundamental family of  neighborhoods of $\xi$. 

  We will repeatedly use the following sequential characterization.
  
  \begin{lemma} \label{lem:necc_star}
  
    Let $\xi,\eta\in\partial M$. If there are sequences $x_n,z_n\in M$ with $x_n\to\xi$, $z_n\to\eta$, and \[ d(x_n,z_n)-d(b,z_n)\le C \] for all sufficiently large $n$, then $\eta\in\star(\xi,C)$. Conversely, if $\overline M$ is first countable and $\eta\in\star(\xi,C)$, then there are sequences $x_n\to\xi$ and $z_n\to\eta$ such that \[ \limsup_{n\to\infty} \bigl(d(x_n,z_n)-d(b,z_n)\bigr) \le C. \] 
    
  \end{lemma}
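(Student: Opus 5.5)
The plan is to prove the two directions separately, unwinding the definitions of $H_b(V,C)$ and $\star(\xi,C)$ and using the convention $d(x,\xi)=\infty$ for boundary points $\xi$.

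\emph{Sufficiency.} Assume $x_n\to\xi$, $z_n\to\eta$, and $d(x_n,z_n)-d(b,z_n)\le C$ for $n\ge n_0$. Fix a neighborhood $V\in V(\xi)$. Since $x_n\to\xi$, there is $n_1\ge n_0$ with $x_n\in V$ for all $n\ge n_1$. Then for such $n$,
\[ d(V,z_n)\le d(x_n,z_n)\le d(b,z_n)+C, \]
so $z_n\in H_b(V,C)$. Because $z_n\to\eta$, this gives $\eta\in\overline{H_b(V,C)}$. As $V$ was arbitrary, $\eta\in\star(\xi,C)$.

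\emph{Necessity.} Use first countability to pick nested countable neighborhood bases $V_1\supset V_2\supset\cdots$ of $\xi$ and $W_1\supset W_2\supset\cdots$ of $\eta$ in $\overline M$. We may take $V(\xi)$ to be a fundamental family; independence of this choice is the remark after the definition. Then $\eta\in\overline{H_b(V_n,C)}$ for each $n$, so $W_n$ meets $H_b(V_n,C)$, and we choose $z_n\in W_n\cap H_b(V_n,C)$. This gives $z_n\to\eta$. By definition, $d(V_n,z_n)\le d(b,z_n)+C$.

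Since $V_n$ may contain boundary points, we take the infimum effectively over $V_n\cap M$. Points of $\partial M$ contribute $d=\infty$ and do not affect the infimum; also $d(V_n,z_n)<\infty$, so $V_n\cap M\neq\emptyset$. Choose $x_n\in V_n\cap M$ with
\[ d(x_n,z_n)\le d(V_n,z_n)+\tfrac1n\le d(b,z_n)+C+\tfrac1n. \]
Then $x_n\to\xi$ because the $V_n$ form a nested neighborhood base. Finally,
\[ \limsup_{n\to\infty}\bigl(d(x_n,z_n)-d(b,z_n)\bigr)\le C. \]

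The only delicate points are bookkeeping. First, the infimum defining $d(V,z)$ is over a set containing boundary points, which is why the convention $d=\infty$ is needed to force the near-minimizers $x_n$ into $M$. Second, we need $V(\xi)$ to be a countable nested base, which uses first countability and the freedom to choose a fundamental family. Neither step is a real obstacle, and the argument is straightforward.
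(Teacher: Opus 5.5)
Your proof is correct and is essentially the paper's argument. For the first direction, $x_n$ eventually lies in $V$, which puts $z_n$ in $H_b(V,C)$. For the converse, you take nested countable bases at $\xi$ and $\eta$, choose $z_n$ in the intersection of the $n$-th neighborhood of $\eta$ with $H_b(V_n,C)$, and then choose a $\tfrac1n$-near-minimizer $x_n \in V_n$. Your remark that the convention $d(x,\xi)=\infty$ forces these near-minimizers into $V_n \cap M$ is a detail the paper leaves implicit.
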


  \begin{proof} 
    For the first direction, if $V$ is a neighborhood of $\xi$, then $x_n\in V$ eventually, so \[ d(V,z_n)-d(b,z_n) \le d(x_n,z_n)-d(b,z_n) \le C. \] Since $z_n\to\eta$, this gives $\eta\in\overline{H_b(V,C)}$, and hence $\eta\in\star(\xi,C)$.
    
    Conversely, choose nested neighborhood bases $V_n$ and $U_n$ at $\xi$ and $\eta$. Since $\eta\in\overline{H_b(V_n,C)}$, choose $z_n\in U_n\cap H_b(V_n,C)$ and then $x_n\in V_n$ such that \[ d(x_n,z_n)-d(b,z_n)\le C+\frac1n. \] Then $x_n\to\xi$, $z_n\to\eta$, and the conclusion follows. \qedhere
    
  \end{proof}

  We have the following based-star variant of semicontinuity; cf. \cite{DF21}.
 
  \begin{lemma}[Semicontinuity for based stars] \label{lem:semicontinuity}
    Let $(M,d)$ be a metric space with a metrizable bordification
    $\overline M$, and fix $b\in M$. Suppose
    \[
    \xi_n\to\xi,\qquad \eta_n\to\eta, \quad \text{where}\quad \xi_n, \eta_n, \xi, \eta \in \partial M.
    \]
    For every $n$, suppose there are sequences
    $x_{n,k}\to\xi_n$ and $z_{n,k}\to\eta_n$ such that
    \[
    \liminf_{k\to\infty}
    \bigl(d(x_{n,k},z_{n,k})-d(b,z_{n,k})\bigr)<0.
    \]
    Then $ \eta\in\star_b(\xi).$
    \end{lemma}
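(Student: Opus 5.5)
The plan is a diagonal argument that reduces the statement to the first direction of Lemma \ref{lem:necc_star} with $C=0$. Since $\overline M$ is metrizable, fix a compatible metric $\rho$ on $\overline M$. For each $n$, the hypothesis $\liminf_{k}\bigl(d(x_{n,k},z_{n,k})-d(b,z_{n,k})\bigr)<0$ gives infinitely many indices $k$ with
\[
d(x_{n,k},z_{n,k})-d(b,z_{n,k})<0 .
\]
Because $x_{n,k}\to\xi_n$ and $z_{n,k}\to\eta_n$ as $k\to\infty$, we can pick one such index $k_n$ large enough that also $\rho(x_{n,k_n},\xi_n)<1/n$ and $\rho(z_{n,k_n},\eta_n)<1/n$. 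Set $x_n'=x_{n,k_n}$ and $z_n'=z_{n,k_n}$.

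The triangle inequality for $\rho$ then gives $\rho(x_n',\xi)\le 1/n+\rho(\xi_n,\xi)\to0$, so $x_n'\to\xi$, and likewise $z_n'\to\eta$. By construction,
\[
d(x_n',z_n')-d(b,z_n')<0\le 0
\]
for every $n$.

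The first direction of Lemma \ref{lem:necc_star} uses neither first countability nor completeness beyond the definitions. It applies with $C=0$ and gives $\eta\in\star(\xi,0)=\star_b(\xi)$. To be self-contained, we can also argue directly: given any neighborhood $V$ of $\xi$, eventually $x_n'\in V$, so $d(V,z_n')\le d(x_n',z_n')\le d(b,z_n')$. Hence $z_n'\in H_b(V,0)$ for all large $n$, and since $z_n'\to\eta$ we get $\eta\in\overline{H_b(V,0)}$. Intersecting over $V\in V(\xi)$ gives the claim.

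There is no real obstacle. The one point needing care is that the strict inequality $\liminf<0$ is exactly what guarantees infinitely many $k$ with a nonpositive value, so that the diagonal choice is possible. A liminf equal to $0$ would give only $\le\varepsilon$ for each $\varepsilon>0$, which is still enough for $C=0$ in the limit, but the strict version avoids this bookkeeping. Metrizability is what makes the diagonal choice yield convergence to $\xi$ and $\eta$.
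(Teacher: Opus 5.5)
Your proposal is correct and is essentially the paper's own proof: fix a compatible metric $\rho$ on $\overline M$, diagonally choose $k_n$ with $d(x_{n,k_n},z_{n,k_n})-d(b,z_{n,k_n})<0$ and $\rho$-distance less than $1/n$ to $\xi_n$ and $\eta_n$, then apply the first direction of Lemma~\ref{lem:necc_star} with $C=0$. One caveat about your closing aside: if the $\liminf$ were only $0$, you would get $z_n'\in H_b(V,1/n)$, hence $\eta\in\star(\xi,\varepsilon)$ for every $\varepsilon>0$, and this does not in general give $\eta\in\star(\xi,0)=\star_b(\xi)$, so the strict inequality is genuinely needed rather than a bookkeeping convenience.
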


    \begin{proof}
    Fix a metric $\rho$ inducing the topology on $\overline M$. For each
    $n$, set
    \[
    a_{n,k}
    =
    d(x_{n,k},z_{n,k})-d(b,z_{n,k}).
    \]
    By assumption, $\liminf_{k\to\infty} a_{n,k}<0.$
    Hence there are arbitrarily large $k$ for which $a_{n,k}<0$.
    
    Since
    \[
    x_{n,k}\to\xi_n
    \qquad\text{and}\qquad
    z_{n,k}\to\eta_n
    \]
    as $k\to\infty$, we may choose $k(n)$ sufficiently large so that
    simultaneously
    \[
    \rho(x_{n,k(n)},\xi_n)<\frac1n,
    \qquad
    \rho(z_{n,k(n)},\eta_n)<\frac1n,
    \]
    and
    \[
    d(x_{n,k(n)},z_{n,k(n)})
    -
    d(b,z_{n,k(n)})<0.
    \]
    Set
    \[
    x_n=x_{n,k(n)},
    \qquad
    z_n=z_{n,k(n)}.
    \]
    
    Since $\xi_n\to\xi$ and $\eta_n\to\eta$, the first two inequalities
    imply
    \[
    x_n\to\xi,
    \qquad
    z_n\to\eta.
    \]
    Moreover, 
    \[
    d(x_n,z_n)-d(b,z_n)<0, \,\, \forall n.
    \]
    Lemma \ref{lem:necc_star}, applied with $C=0$,
    therefore gives
    \[
    \eta\in\star(\xi,0)=\star_b(\xi). \qedhere
    \]
    \end{proof}
  
  \begin{definition}
     
     A point $\xi \in \partial M$ is called a \emph{hyperbolic point} if
     $\star(\xi) = \{\xi\}$.
     
  \end{definition}
  
  Let $\isom(M)$ be the group of isometries of $M$. Given $g \in \isom(M)$,
  the \emph{limit set} $\Lambda(g)$ of $g$ is \[ \Lambda(g) =
  \overline{\set{g^n(x)}{n \in \ZZ}} \bigcap \partial M.\]

  \begin{definition}
     
    An element $g \in \isom(M)$ is called \emph{strictly hyperbolic} if $\Lambda(g)$ consists of exactly two distinct hyperbolic points. 
     
  \end{definition}

   Using stars, \cite{Kar05} defined an extended metric on $\partial M$ as follows. Define an equivalence relation on $\partial M$, where $\xi \sim \eta$ if $\star(\xi)=\star(\eta)$. Let $\bar{\xi}$ be the class of $\xi$ in $\partial M/\sim$. Consider the graph with vertex set the elements of $\partial M/\sim$, and two vertices $\bar{\xi}$ and $\bar{\eta}$ span an edge of length $1$ if either $\eta \in \star(\xi)$ or $\xi \in \star(\eta)$. Let $d_\star$ be the associated path metric on $\partial M/\sim$, where we assign distance $\infty$ to vertices lying in distinct connected components.
  
  \begin{example}
     
     We list some examples of metric spaces whose stars at infinity are
     understood \cite{Kar05}.
  
     Let $M=\HH^n$ be hyperbolic $n$--space and let $\partial \HH^n \cong
     S^{n-1}$ be the Gromov boundary. Then for any $\xi \in \partial
     \HH^n$, $\star(\xi)=\star_b(\xi) = \{ \xi \}$. Thus, every point in $\partial
     \HH^n$ is hyperbolic, every hyperbolic isometry of $\HH^n$ is
     strictly hyperbolic, and the star metric $d_\star$ on $\partial \HH^n$ is discrete, taking value $\infty$ on any pair of distinct points. These results also hold more generally for Gromov hyperbolic spaces. 

     To contrast, let $M=\RR^n$ be Euclidean $n$--space and let $\partial
     \RR^n \cong S^{n-1}$ be the visual sphere at infinity. Here, for any $\xi
     \in \partial \RR^n$, $\star(\xi)=\star_b(\xi)$ is the hemisphere centered at $\xi$. So points of $\partial \RR^n$ are not hyperbolic, isometries of $\RR^n$ are not strictly hyperbolic, and the $d_\star$-diameter of $\partial \RR^n$ is two.

     If $M$ is a complete \cat space and $\partial M$ is the visual boundary of $M$ equipped with angular metric, then $\star(\xi) = \{ \eta \in \partial M : \angle(\eta, \xi) \le \pi/2\}$ for any $\xi \in \partial M$, and the strictly hyperbolic isometries are precisely the rank-one isometries of $M$.

  \end{example}

  \subsection{Stars and horofunction compactification}
  
  Let $M$ be a complete metric space, with possibly an asymmetric metric $d$, and $\overline{M}$ its horofunction compactification relative to a basepoint $b \in M$.
  
  If $d$ is asymmetric, then the definition of half-spaces in $M$ depends on the order of the points. Our convention is chosen to fit with our definition of $\Phi_z$ given in the previous section. In particular, an equivalent definition of a $C$-half-space is \[ H_b(V,C) = \Big\{ z \in M: \inf_{x \in V\cap M} \Phi_z(x) \le C \Big\}.\]
  
\begin{lemma} \label{lem:suff_star}
  Let $\xi,\eta \in \partial M$. If for every neighborhood $V$ of $\xi$, $\inf_{x \in V\cap M} \Phi_\eta(x) < C$, then $\eta \in \star(\xi,C)$. 
\end{lemma}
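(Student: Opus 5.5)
The plan is to reduce the claim to the sequential criterion of Lemma \ref{lem:necc_star}. Here $\eta \in \partial M$ is a horofunction, i.e.\ a limit in $C(M,\RR)$ of functions $\Phi_{z_n}$ for some sequence $z_n \in M$. The convergence is uniform on compact subsets of $M$, and $z_n \to \eta$ in $\overline M$ precisely means $\Phi_{z_n} \to \Phi_\eta$ in this topology. We also use the reformulation $H_b(V,C) = \{ z : \inf_{x \in V\cap M} \Phi_z(x) \le C\}$ stated just before the lemma.

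First, fix a neighborhood $V$ of $\xi$. By hypothesis there is a point $x_V \in V \cap M$ with $\Phi_\eta(x_V) < C$; set $\delta_V = C - \Phi_\eta(x_V) > 0$. Choose a sequence $z_n \in M$ with $z_n \to \eta$. Since $\{x_V\}$ is compact, pointwise convergence at $x_V$ follows from uniform convergence on compacts. Hence $\Phi_{z_n}(x_V) \to \Phi_\eta(x_V)$, so for all large $n$ we have $\Phi_{z_n}(x_V) < C$. This gives
\[ \inf_{x \in V\cap M} \Phi_{z_n}(x) \le \Phi_{z_n}(x_V) < C, \]
that is, $z_n \in H_b(V,C)$ for all large $n$. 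As $z_n \to \eta$, we conclude $\eta \in \overline{H_b(V,C)}$.

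Since $V$ was an arbitrary neighborhood of $\xi$ (in particular any member of the chosen family $V(\xi)$), we get $\eta \in \bigcap_{V} \overline{H_b(V,C)} = \star(\xi,C)$. The only point that needs care is the existence of a sequence $z_n \in M$ converging to $\eta$. This holds because $\overline M$ is the closure of $\Phi(M)$ in $C(M,\RR)$. The standing assumptions of Proposition \ref{prop:assumptions} make $\overline M$ metrizable, and in general one can replace the sequence by a net without changing the argument. There is no real obstacle. The strict inequality in the hypothesis is what allows pointwise approximation to be used without losing an $\epsilon$, and it is the only subtle step.
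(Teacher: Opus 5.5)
Your proof is correct and is essentially the paper's argument. The paper also picks $x\in V\cap M$ with $\Phi_\eta(x)<C$ and uses continuity of evaluation at $x$, phrased as an open neighborhood $U$ of $\eta$ with $\Phi_z(x)<C$ for all $z\in U$, so that $U\cap M\subset H_b(V,C)$; working with a neighborhood directly avoids your detour through sequences (and the metrizability or net remark it requires).
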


\begin{proof}
  Choose $x \in V \cap M$ such that $\Phi_\eta(x) < C $. Let $U \subset \overline{M}$ be a neighborhood of $\eta$ such that for all $z \in U$, $\Phi_z(x) < C$. Therefore, for all $z \in U \cap M$, we have \[ \inf_{x \in V\cap M} \Phi_z(x) \le \Phi_z(x) < C. \] In other words, $U \cap M \subset H_b(V,C)$, so $\eta \in \overline{H_b(V,C)}$. Since $V$ was arbitrary, $\eta \in \star(\xi,C).$
\end{proof}

\section{Thurston metric} \label{sec:ThurstonMetric}

    Thurston's asymmetric metric on $\T$ is defined by
    \[ \dL(x,y) = \log \inf_\varphi \text{L}(\varphi),
    \]
    where $\varphi:x\to y$ ranges over $L$--Lipschitz homeomorphisms in the appropriate
    homotopy class. Thurston proved the length-ratio formula \cite{Thu86}
    \begin{equation} \label{eqn:ratio_curves}
    \dL(x,y) =
    \log\sup_{\alpha\in \curve}
    \frac{\ell_y(\alpha)}{\ell_x(\alpha)}.
    \end{equation}
     Since hyperbolic length extends continuously to $\ML$ and is homogeneous, Thurston's formula can equivalently be maximized over any compact section of $\PML$ in $\ML$.

  \subsection*{Horofunction compactification of $\T$}

  Fix a basepoint $b \in \T$. By the formula given by (\ref{eqn:ratio_curves}), 
  \[ \Phi_z(x) = \dL(x,z) - \dL(b,z) = \log \sup_{\alpha \in \curve} \frac{\ell_z(\alpha)}{\ell_x(\alpha)} - \log \sup_{\alpha \in \curve} \frac{\ell_z(\alpha)}{\ell_b(\alpha)}. \]
  The Thurston metric is complete, proper, geodesic, induces the usual topology on $\T$, and satisfies the Busemann axiom; see \cite{Thu86}. This allows us to identify $\T$ with its image in $C(\T,\RR)$, under the map $z \mapsto \Phi_z$.
  
  Walsh showed that the horofunction compactification of $(\T,\dL)$ and the Thurston compactification of \T coincide. 

  \begin{theorem}[\cite{Wal14}] \label{thm:horo}
     
     A sequence $z_n \in \T$ converges in the Thurston compactification of
     \T if and only if $\Phi_{z_n}$ converges in the horofunction
     compactification of \T. If the limit in the Thurston compactification
     is the projective class $[\eta] \in \PML$, then the limiting
     horofunction is \[ \Phi_\eta(x) =  \log \sup_{\alpha \in \curve}
     \frac{\I(\eta,\alpha)}{\ell_x (\alpha)} -  \log \sup_{\alpha \in \curve}
     \frac{ \I(\eta,\alpha) }{\ell_b(\alpha)} .\]

  \end{theorem}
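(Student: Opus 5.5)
The plan has two parts. First, if $z_n\to[\eta]$ in the Thurston compactification, then $\Phi_{z_n}\to\Phi_\eta$ uniformly on compact subsets of $\T$. Second, the map $[\eta]\mapsto\Phi_\eta$ is injective and its values are not of the form $\Phi_z$ with $z\in\T$. The converse direction then follows from compactness. Indeed, if $\Phi_{z_n}$ converges but $z_n$ had two subsequences with distinct Thurston limits, then by the first part these limits would give two limiting functions that must coincide. Limits inside $\T$ are handled by continuity and injectivity of $\Phi$, and boundary limits are separated from interior ones and from each other by the second part.

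\textbf{Convergence.} Fix a compact section $K\subset\ML^*$ of $\PML$. By the length-ratio formula (\ref{eqn:ratio_curves}) and homogeneity, the two suprema in $\Phi_z$ may be taken over $K$ instead of over $\curve$. If $z_n\to[\eta]$, there are $r_n\to0$ with $r_n\ell_{z_n}(\alpha)\to i(\eta,\alpha)$ for every curve $\alpha$. The factor $r_n$ cancels in the difference of logarithms, so
\[
\Phi_{z_n}(x)=\log\sup_{\mu\in K}\frac{r_n\ell_{z_n}(\mu)}{\ell_x(\mu)}-\log\sup_{\mu\in K}\frac{r_n\ell_{z_n}(\mu)}{\ell_b(\mu)}.
\]
The key step is to upgrade pointwise convergence on curves to uniform convergence $r_n\ell_{z_n}\to i(\eta,\cdot)$ on $K$. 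For this I would use the standard description of the Thurston compactification: the maps $\mu\mapsto r_n\ell_{z_n}(\mu)$ are continuous and homogeneous, and they are uniformly equicontinuous on $K$. Equicontinuity can be obtained, for instance, by comparing with intersection numbers against a fixed filling collection of curves, or by invoking Thurston's compactification theorem in its function-space form. Once uniform convergence holds, note that for $x$ in a compact set $Q\subset\T$ the functions $\ell_x$ are bounded above and below on $K$ uniformly in $x\in Q$. Hence both suprema converge uniformly in $x\in Q$, and the limit is positive because $\eta\neq0$. This gives $\Phi_{z_n}\to\Phi_\eta$ in $C(\T,\RR)$.

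\textbf{Injectivity and separation.} This is the step I expect to be the main obstacle. The first thing I would try is to read off the zero set of $\eta$ from $\Phi_\eta$ by pinching. Take $x_t$ obtained from $b$ by shrinking a curve $\alpha$ to length $t\to0$ in Fenchel--Nielsen coordinates, keeping the other coordinates fixed.
\begin{itemize}
\item If $i(\eta,\alpha)>0$, then using $\alpha$ itself in the supremum gives $\Phi_\eta(x_t)\ge\log(1/t)-O(1)$.
\item If $i(\eta,\alpha)=0$, then by the collar lemma (Lemma \ref{lem:collar}) and Lemma \ref{lem:winding}, every $\beta$ crossing $\alpha$ has $\ell_{x_t}(\beta)\gtrsim i(\alpha,\beta)\log(1/t)$. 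Curves disjoint from $\alpha$ keep length bounded below. So $\Phi_\eta(x_t)=O(\log\log(1/t))$, and in particular it is $o(\log(1/t))$.
\end{itemize}
The same growth-rate comparison separates horofunctions from the functions $\Phi_z$, $z\in\T$: for these, $\Phi_z(x_t)$ stays bounded, since $\ell_z(\alpha)$ is bounded below and the lengths of curves crossing $\alpha$ grow.

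The zero set alone does not recover $[\eta]$, so the refinement I would attempt is to test $\Phi_\eta$ along sequences $x_n\to[\mu]$ and compare growth rates. Two natural choices are Thurston stretch lines directed by $\mu$, and sequences produced with the shearing coordinates of Theorem \ref{thm:papa}, along which $\ell_{x_n}\approx i(F_n,\cdot)$. Along such sequences, $\Phi_\eta(x_n)-\Phi_{\eta'}(x_n)$ should behave like $\log\bigl(i(\eta,\mu)/i(\eta',\mu)\bigr)$ up to a common normalization. Letting $\mu$ range over curves, this would recover the projective class of the vector $\bigl(i(\eta,\alpha)\bigr)_\alpha$, hence $[\eta]$. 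Verifying this asymptotic uniformly is the delicate point, and it is where Walsh's argument must do real work.
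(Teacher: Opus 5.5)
The paper does not prove this statement; it is quoted from Walsh, so I am judging your outline on its own. The architecture is right: convergence $z_n\to[\eta]\Rightarrow\Phi_{z_n}\to\Phi_\eta$, then injectivity of the extended map on $\overline{\mathcal T(S)}$, then compactness. But the step that carries the content of the theorem, injectivity of $[\eta]\mapsto\Phi_\eta$ on $\mathcal{PML}(S)$, is left unproved. Moreover, the asymptotic you propose for it is not what your sequences produce.

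Put $E_\eta(x)=\sup_{\alpha}i(\eta,\alpha)/\ell_x(\alpha)$, so $\Phi_\eta=\log E_\eta-\log E_\eta(b)$. If $x_t$ has $\lambda$-shearing coordinates $e^tF$, Theorem~\ref{thm:papa} gives $e^t i(F,\alpha)\le \ell_{x_t}(\alpha)\le e^t i(F,\alpha)+C_\alpha$, hence
\[
e^{t}E_\eta(x_t)\longrightarrow \sup_{\alpha}\frac{i(\eta,\alpha)}{i(F,\alpha)}\in(0,\infty].
\]
This is a ratio of suprema, not $i(\eta,F)$. For $F$ a curve with $i(\eta,F)>0$ it is $+\infty$: take $\alpha=F$, whose length stays $\le C_F$. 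In that case nothing controls $\Phi_\eta(x_t)-\Phi_{\eta'}(x_t)$. So letting the target range over curves along stretch or shear rays does not recover $\bigl(i(\eta,\alpha)\bigr)_\alpha$. These rays can be used differently: aim at $F=\eta$ and then at $F=\eta'$. Then $\Phi_\eta=\Phi_{\eta'}$ forces
\[
\sup_\alpha\frac{i(\eta',\alpha)}{i(\eta,\alpha)}\cdot\sup_\alpha\frac{i(\eta,\alpha)}{i(\eta',\alpha)}=1,
\]
which gives proportionality.

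Your heuristic is exactly right for Dehn-twist sequences, and this gives a short proof. If $\Phi_\eta=\Phi_{\eta'}$, then $E_\eta=\kappa E_{\eta'}$ on $\mathcal T(S)$ with $\kappa=E_\eta(b)/E_{\eta'}(b)$. For a mapping class $f$, naturality gives $E_\eta(fx)=E_{f^{\mp1}\eta}(x)$, the sign depending on convention. Take $x=b$ and $f=T_\alpha^n$, then divide by $n$. Since $\frac1n T_\alpha^{\pm n}\eta\to i(\eta,\alpha)\alpha$ in $\mathcal{ML}(S)$, and $\nu\mapsto E_\nu(b)=\max_{P_b}i(\nu,\cdot)$ is continuous and $1$-homogeneous, you get $i(\eta,\alpha)E_\alpha(b)=\kappa\, i(\eta',\alpha)E_\alpha(b)$ for every curve $\alpha$. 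Hence $\eta=\kappa\eta'$.

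Two smaller points. First, your separation of $\Phi_\eta$ from the $\Phi_z$ is wrong as stated. If $x_t$ pinches $\alpha$ to length $t$, then $\Phi_z(x_t)\ge\log\bigl(\ell_z(\alpha)/t\bigr)-d_{\mathrm{Th}}(b,z)\to\infty$, the same growth you found for $\Phi_\eta$ with $i(\eta,\alpha)>0$. It is also unnecessary. If $\Phi_\eta=\Phi_z$, then for any $z_n\to[\eta]$ your first part gives $\Phi_{z_n}\to\Phi_\eta=\Phi_z$, and Proposition~\ref{prop:assumptions} forces $z_n\to z$ in $\mathcal T(S)$, which is absurd.

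Second, the uniform convergence $r_n\ell_{z_n}\to i(\eta,\cdot)$ on $K$ is asserted, not proved. It is exactly Walsh's criterion, Proposition~\ref{prop:convergence}, which you may cite. Neither of your suggested justifications is an argument as written, and on punctured surfaces the geodesic-current proof that works for closed surfaces is not available.
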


   We use the following normalization. Identify $\PML$ with
    \[
    P_b=\{\lambda\in\ML:\ell_b(\lambda)=1\}.
    \]

  For $z \in \T$ and $\eta\in\ML^*$, set
    \[
    Q(z)=\max_{\lambda\in P_b}\ell_z(\lambda)
    =e^{\dL(b,z)},
    \qquad
    Q(\eta)=\max_{\lambda\in P_b}i(\eta,\lambda),
    \]
    and define
    \[
    L_z(\lambda)=\frac{\ell_z(\lambda)}{Q(z)},
    \qquad
    L_\eta(\lambda)=\frac{i(\eta,\lambda)}{Q(\eta)}.
    \]
    Using this notation, we can succinctly write, 
    \begin{equation} \label{eqn:normalized}
    \Phi_z(x)
    =
    \log\max_{\lambda\in\ML^*}
    \frac{L_z(\lambda)}{\ell_x(\lambda)},
    \qquad
    \Phi_\eta(x)
    =
    \log\max_{\lambda\in\ML^*}
    \frac{L_\eta(\lambda)}{\ell_x(\lambda)}.
    \end{equation}
    
  We will use the following convergence criterion of Walsh:
    
  \begin{proposition}[\cite{Wal14}] \label{prop:convergence}
    Let $z_n \in \T$ and $[\eta] \in \PML$. Then $z_n \to [\eta]$ if and only if $L_{z_n} \to L_\eta$ uniformly on compact subsets of $\ML$. 
  \end{proposition}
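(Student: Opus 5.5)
The plan is to pass between Thurston's criterion, which says $z_n\to[\eta]$ if and only if there are $r_n>0$ with $r_n\ell_{z_n}(\alpha)\to i(\eta,\alpha)$ for every $\alpha\in\curve$, and the normalized functions $L_{z_n}$ and $L_\eta$. The normalization by $Q$ replaces the unknown scaling $r_n$ by a canonical one. The real work is upgrading pointwise convergence on curves to uniform convergence on compact subsets of $\ML$.

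\emph{Converse direction.} Suppose $L_{z_n}\to L_\eta$ uniformly on compact sets. Put $r_n=Q(\eta)/Q(z_n)$. Then $r_n\ell_{z_n}(\alpha)=Q(\eta)L_{z_n}(\alpha)\to Q(\eta)L_\eta(\alpha)=i(\eta,\alpha)$ for every curve $\alpha$.
\begin{itemize}
\item To apply Thurston's criterion we must first rule out that $z_n$ has a subsequence converging to some $z\in\T$.
\item If it did, continuity of $(x,\lambda)\mapsto\ell_x(\lambda)$ and of $Q$ would give $L_z=L_\eta$ on $\ML$.
\item Evaluating at $\eta$ gives $L_z(\eta)>0$ but $L_\eta(\eta)=i(\eta,\eta)/Q(\eta)=0$, a contradiction.
\end{itemize}
So $z_n$ leaves every compact subset of $\T$. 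By compactness of $\TC$, every subsequence has a further subsequence converging to some $[\eta']\in\PML$. By the forward direction (below), $L_{z_n}\to L_{\eta'}$ along it, so $L_{\eta'}=L_\eta$ and hence $[\eta']=[\eta]$, since a measured lamination is determined up to scale by its intersection function. Thus $z_n\to[\eta]$.

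\emph{Forward direction.} Suppose $z_n\to[\eta]$, with $r_n\ell_{z_n}(\alpha)\to i(\eta,\alpha)$ for all curves. The key claim is that $f_n:=r_n\ell_{z_n}$ converges to $i(\eta,\cdot)$ uniformly on the compact set $P_b$.
\begin{itemize}
\item By homogeneity, $f_n(t\alpha)=tf_n(\alpha)$, so $f_n\to i(\eta,\cdot)$ pointwise on weighted curves, which are dense in $\ML$.
\item Hence it suffices to show that $\{f_n\}$ is equicontinuous on $P_b$. Pointwise convergence on a dense set together with equicontinuity gives uniform convergence on $P_b$, and then on compact subsets of $\ML$ by homogeneity.
\end{itemize}

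\emph{Main obstacle: equicontinuity.} I would cover $\ML$ by finitely many train-track charts; finitely many suffice, up to the $\mcg$-action, and this also works for punctured surfaces. On a chart $\tau$, a lamination carried by $\tau$ has weights $w=(w_e)$, and $\ell_x(\lambda)$ is within a bounded multiplicative and additive error of $\sum_e w_e\,\ell_x(e)$, with $\ell_x(e)$ measured along a smoothing. A cleaner route is to use that $\ell_x$ is convex and homogeneous in the weight coordinates of a chart; this is a standard consequence of the lamination being a limit of multicurves.
\begin{itemize}
\item The $f_n$ are then convex functions on a finite-dimensional cone.
\item They are uniformly bounded on the chart's compact slice, because $f_n$ converges on the finitely many vertex curves of $\tau$ and convexity bounds $f_n$ by its values there.
\item A uniformly bounded family of convex functions is locally equi-Lipschitz on the interior, which gives equicontinuity there.
\item Near the boundary faces of a chart I would use overlapping charts so that every point of $P_b$ is interior to some chart.
\end{itemize}
If convexity in chart coordinates proves too delicate, I would fall back on the cruder bound $\ell_x(\lambda)\le C\sum_e w_e\ell_x(e)$ together with $\ell_x(\lambda)\ge c\sum_e w_e\ell_x(e)$ to obtain uniform Lipschitz control directly.

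\emph{Conclusion of the forward direction.} Given uniform convergence, $r_nQ(z_n)=\max_{P_b}f_n\to\max_{P_b}i(\eta,\cdot)=Q(\eta)>0$. Therefore
\[
L_{z_n}=\frac{f_n}{r_nQ(z_n)}\longrightarrow\frac{i(\eta,\cdot)}{Q(\eta)}=L_\eta
\]
uniformly on compact sets.
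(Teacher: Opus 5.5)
The paper gives no proof of this statement; it is quoted from Walsh \cite{Wal14}, so your argument has to stand on its own. Your converse direction is fine, and shorter than you make it. Once you have $r_n=Q(\eta)/Q(z_n)$ with $r_n\ell_{z_n}(\alpha)\to i(\eta,\alpha)$ for every curve, the description of the Thurston topology recalled in Section~\ref{sec:background} already gives $z_n\to[\eta]$. You need neither the compactness detour nor the forward direction. The reduction of the forward direction to equicontinuity of $f_n=r_n\ell_{z_n}$ on $P_b$ is correct, and so is the final renormalization via $r_nQ(z_n)=\max_{P_b}f_n\to Q(\eta)$. The gap is that this equicontinuity, which is the whole content of the statement, is asserted rather than proved.

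\emph{Convexity.} Convexity of $\ell_x$ in the weights of a chart is true, but not for the reason you give. Density of weighted multicurves only transports an inequality you already have on multicurves; it does not create one. The missing input is subadditivity for carried multicurves. Let $A$ and $C$ be carried by $\tau$ with integral weights $a$ and $c$, and let $A+_{\tau}C$ denote the multicurve with weights $a+c$. Then $\ell_x(A+_{\tau}C)\le\ell_x(A)+\ell_x(C)$, which follows by smoothing:
\begin{itemize}
\item Carried representatives of $A$ and $C$ can be put in minimal position inside the tie neighbourhood $N(\tau)$. An index count shows that a bigon bounded by two arcs transverse to the ties contains no complementary region of $\tau$, so it can be pushed out along ties.
\item This pair is isotopic to the pair of geodesic representatives.
\item Resolve each of the $i(A,C)$ crossings in the unique way that keeps strands transverse to the ties. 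This yields a representative of $A+_{\tau}C$, which on the geodesic side has length exactly $\ell_x(A)+\ell_x(C)$.
\end{itemize}
With homogeneity and continuity of the chart map, this gives convexity on the whole weight cone. Without it, your bound by vertex values and your equi-Lipschitz step have nothing to stand on.

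\emph{Boundary faces.} Your claim that every point of $P_b$ is interior to a single chart is not the standard covering statement. For the usual finite families (Penner's standard train tracks for a pants decomposition, or diagonal extensions of a large track), points such as the pants curves lie on faces shared by several charts. Convexity in one chart's coordinates says nothing across a face. You have two options:
\begin{itemize}
\item Show that every measured lamination is carried with positive weight on every branch by some maximal train track. This is true, but it requires a construction.
\item More simply, prove uniform convergence on each closed polytope slice by induction on faces. Vertices are weighted curves. On a face $F$, the equi-Lipschitz bound gives uniform convergence on compact subsets of its relative interior. Near $\partial F$, write $p=(1-s)q+sc$ with $q\in\partial F$ and $c$ interior. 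The upper bound comes from $f_n(p)\le(1-s)f_n(q)+sf_n(c)$. The lower bound comes from convexity on the segment from $p$ to $c$ through a point at fixed small depth.
\end{itemize}

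\emph{Fallback.} Your fallback does not work. A two-sided bound $c\,\Sigma\le\ell_x\le C\,\Sigma$ with $\Sigma$ linear in the weights gives no modulus of continuity at all, so it cannot substitute for convexity.
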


    \begin{lemma}\label{lem:divergence}
    If $x_n\to[\xi]\in\PML$, then $Q(x_n)\to\infty$.
    \end{lemma}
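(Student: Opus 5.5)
We argue by contradiction, using the characterization of convergence in the Thurston compactification. Suppose $x_n\to[\xi]$ but $Q(x_n)=e^{\dL(b,x_n)}$ does not tend to infinity. After passing to a subsequence, we may assume $Q(x_n)\le K$ for all $n$, so that $\dL(b,x_n)\le \log K$. The aim is to show that the $x_n$ then stay in a compact subset of $\T$. This would contradict convergence to a boundary point, since $\T$ is open in $\TC$ and a sequence in a compact subset of $\T$ has all its accumulation points in $\T$.

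Boundedness of $\dL(b,x_n)$ alone does not give compactness, because the metric is asymmetric. The key observation is that $\dL(b,x_n)\le\log K$ means $\ell_{x_n}(\alpha)\le K\,\ell_b(\alpha)$ for every $\alpha\in\curve$. Fix a finite filling set of curves $\gamma_1,\dots,\gamma_k$ (for instance, one whose lengths determine points of $\T$ properly). All the lengths $\ell_{x_n}(\gamma_i)$ are then bounded above by $K\max_i\ell_b(\gamma_i)$. A standard fact is that the map $\T\to\RR_{>0}$, $x\mapsto\sum_i\ell_x(\gamma_i)$, is proper when the $\gamma_i$ fill. Hence the $x_n$ lie in a compact set.

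If we prefer not to cite properness, we can argue directly from Lemma~\ref{lem:collar}. Bounded lengths of a filling set force a lower bound on the systole: if some curve $\beta$ had $\ell_{x_n}(\beta)\to0$, then some $\gamma_i$ meets $\beta$, crosses its collar, and so has length at least $2r_{x_n}(\beta)\to\infty$. Uniform upper bounds on the lengths of a filling collection also bound the lengths and twists of a pants decomposition, which gives compactness via Fenchel--Nielsen coordinates.

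Alternatively, we can use the convergence criterion directly. If $x_n\to[\xi]$, there are $r_n\to0$ with $r_n\ell_{x_n}(\alpha)\to i(\xi,\alpha)$ for every curve $\alpha$. Since $\xi\neq0$, some curve $\alpha$ has $i(\xi,\alpha)>0$. Then
\[
\ell_{x_n}(\alpha)\sim \frac{i(\xi,\alpha)}{r_n}\to\infty,
\]
and therefore
\[
Q(x_n)\ \ge\ \frac{\ell_{x_n}(\alpha)}{\ell_b(\alpha)}\ \longrightarrow\ \infty.
\]
Here we use that $\alpha/\ell_b(\alpha)\in P_b$. This second argument is shorter and avoids the compactness step entirely.

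The only point needing care is the claim $r_n\to0$, which the paper asserts as part of the description of the Thurston topology. If one wants to justify it, note that if $r_n$ stayed bounded below along a subsequence, then all lengths $\ell_{x_n}(\alpha)$ would stay bounded. By the properness argument above, the $x_n$ would then remain in a compact subset of $\T$, contradicting convergence to the boundary. This is the main, though mild, obstacle.
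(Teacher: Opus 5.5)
Your proposal is correct and essentially matches the paper's proof: your second argument (take $r_n\to0$ and a curve $\alpha$ with $i(\xi,\alpha)>0$, so $\ell_{x_n}(\alpha)\to\infty$ and $Q(x_n)\ge \ell_{x_n}(\alpha)/\ell_b(\alpha)$) is exactly what the paper does. Your compactness argument, via bounded lengths of a filling set, spells out the ``properness of the metric'' that the paper cites in one line to justify $r_n\to0$.
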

    
    \begin{proof}
    This follows from properness of the metric. Let $r_n>0$ satisfy
    $r_n\ell_{x_n}(\cdot)\to i(\xi,\cdot)$. Then $r_n\to0$.
    Choose $\alpha\in P_b$ with $i(\xi,\alpha)>0$. It follows that
    $\ell_{x_n}(\alpha)\to\infty$, and hence
    \[
    Q(x_n)\ge \ell_{x_n}(\alpha)\to\infty. \qedhere
    \]
    \end{proof}
  
 \subsection{Proof of Theorem \ref{introthm:stars}}

  For $[\xi]\in\PML$, recall
  \[ Z \big( [\xi] \big) =\set{[\eta] \in \PML}{\I(\eta,\xi) = 0}.\] The condition $i(\param,\param)=0$ is well-defined on \PML, and continuity of the intersection pairing on $\ML$ implies that $Z \big([\xi]\big)$ is a closed subset of \PML.

  The proof of Theorem \ref{introthm:stars} is broken into two directions. Corollary \ref{cor:direction1} shows $\star\big([\xi]\big) \subset Z\big( [\xi] \big)$, and Corollary \ref{cor:direction2} shows $Z\big( [\xi] \big) \subset \star_b\big([\xi]\big)$. Together these yield \[Z\big( [\xi] \big) \subset \star_b\big([\xi]\big) \subset \star \big( [\xi] \big) \subset Z \big( [\xi] \big),\] establishing equality throughout. 

  Henceforth, to simplify notation, we will write $A \prec B$ if $A \le KB$
  for some suitable constant $K$. The notation $A \asymp B$ will mean $A \prec B$ and $B \prec A$. 

 \subsection{Short markings and the first direction}
  
  We first recall two useful estimates for short markings. A \emph{marking} $\mu$ on $S$ consists of a pants decomposition together with a transversal $\bar\alpha$ for each pants curve $\alpha$. We also write $\alpha = \bar{\bar{\alpha}}$. A \emph{short marking} $\mu_x$ on $x \in \T$ is obtained by successively choosing shortest pants curves and then shortest transversals. Note that a short marking on $x$ may not be unique, but all short markings on $x$ have uniformly bounded intersection number with each other. 

  There is $K_S>0$ such that, for every short marking $\mu_x$ and every $\alpha\in\mu_x$,
  \begin{equation} \label{eqn:curve_and_dual}
    \ell_x(\alpha)\ell_x(\bar\alpha)\le K_S.
  \end{equation}
   Moreover, for every $\eta\in\ML$,
   \begin{equation} \label{eqn:LRT}
    \ell_x(\eta)
    \asymp
    \sum_{\alpha\in\mu_x}
    \ell_x(\bar\alpha)i(\eta,\alpha),
    \end{equation}
    with multiplicative constants depending only on $S$. \cite{LRT12} proved Equation (\ref{eqn:LRT}) for curves. By density of weighted curves in $\ML$ and continuity of the intersection pairing and length function, the estimate extends to all of $\ML$.

  \begin{lemma} \label{lem:nondisjointness}
    Let $x_n\to[\xi]$, let $\mu_n$ be a short marking on $x_n$, and set
    \[ \lambda_{n,\alpha} =
    \frac{\ell_{x_n}(\bar\alpha)}{Q(x_n)} \alpha, \quad 
    \alpha\in\mu_n.
    \]
    If $\eta\in\ML$ satisfies $i(\xi,\eta)>0$, then:
   \begin{itemize}
     \item[(i)] The set $\bigcup_n \left\{ \lambda_{n,\alpha} \right\}_{\alpha \in \mu_n}$ lies in a fixed compact subset of $\ML$.
     \item[(ii)] There exists $\alpha_n \in \mu_n$ such that:
     \begin{itemize}
     \item[(a)] $\inf_n i (\eta,\lambda_{n,\alpha_n}) > 0$; and 
     \item[(b)] for every $z_n\to[\eta]$, there exists $N$ such that $\inf_{n \ge N} L_{z_n}(\lambda_{n,\alpha_n}) >0.$
     \end{itemize}
    \end{itemize}
  \end{lemma}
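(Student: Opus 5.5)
Part (i) comes first, from the short-marking estimates (\ref{eqn:curve_and_dual}) and (\ref{eqn:LRT}). We use a fixed finite filling set of curves for $b$, or equivalently the compact set $P_b$. Since $\lambda_{n,\alpha}$ is a curve with weight $\ell_{x_n}(\bar\alpha)/Q(x_n)$, its $b$-length is $\ell_b(\alpha)\,\ell_{x_n}(\bar\alpha)/Q(x_n)$, and it suffices to bound this uniformly. By definition of $Q$,
\[ \ell_{x_n}(\lambda)\le Q(x_n)\,\ell_b(\lambda) \quad \text{for all } \lambda\in\ML. \]
Apply (\ref{eqn:LRT}) to $\eta=\lambda\in P_b$; this gives $\sum_{\alpha\in\mu_n}\ell_{x_n}(\bar\alpha)\,i(\lambda,\alpha)\prec Q(x_n)$ for every $\lambda\in P_b$. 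Hence the vector $\lambda_{n,\alpha}$ satisfies $i(\lambda,\lambda_{n,\alpha})\prec 1$ uniformly over $\lambda\in P_b$. Choosing finitely many curves in $P_b$ (after rescaling) that fill $S$, bounded intersection with a filling collection bounds $\ell_b(\lambda_{n,\alpha})$. The sets $\{\ell_b\le K\}$ are compact in $\ML$, which proves (i).

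For (ii)(a), let $r_n\ell_{x_n}\to i(\xi,\cdot)$. Applying (\ref{eqn:LRT}) to $\eta$ gives
\[ \ell_{x_n}(\eta)\asymp Q(x_n)\sum_{\alpha\in\mu_n} i(\eta,\lambda_{n,\alpha}). \]
So it suffices to show $\ell_{x_n}(\eta)\succ Q(x_n)$; pigeonholing over the boundedly many $\alpha\in\mu_n$ then picks $\alpha_n$ with $i(\eta,\lambda_{n,\alpha_n})\succ 1$. For the lower bound, note $Q(x_n)=\max_{P_b}\ell_{x_n}$. By Proposition \ref{prop:convergence}, $L_{x_n}\to L_\xi$ uniformly on the compact set containing $\eta/\ell_b(\eta)$, and $L_{x_n}(\eta)=\ell_{x_n}(\eta)/Q(x_n)$. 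Since $L_\xi(\eta)=i(\xi,\eta)/Q(\xi)>0$, we get $\ell_{x_n}(\eta)/Q(x_n)\ge c>0$ for large $n$. For the finitely many small $n$ the quantity is positive anyway, as long as some $\alpha\in\mu_n$ meets $\eta$, which holds because markings fill. This gives (a).

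For (ii)(b), the plan is to pass to limits. By (i) and compactness, any subsequence of $\lambda_{n,\alpha_n}$ has a further subsequence converging to some $\lambda_\infty\in\ML$. By (a) and continuity of $i$, $i(\eta,\lambda_\infty)\ge \inf_n i(\eta,\lambda_{n,\alpha_n})>0$, so in particular $\lambda_\infty\ne 0$. Proposition \ref{prop:convergence} gives $L_{z_n}\to L_\eta$ uniformly on the compact set $K$ from (i). Therefore $L_{z_n}(\lambda_{n,\alpha_n})-L_\eta(\lambda_{n,\alpha_n})\to0$, while
\[ L_\eta(\lambda_{n,\alpha_n})=\frac{i(\eta,\lambda_{n,\alpha_n})}{Q(\eta)}\ge \frac{\inf_n i(\eta,\lambda_{n,\alpha_n})}{Q(\eta)}>0. \]
So $L_{z_n}(\lambda_{n,\alpha_n})$ is eventually bounded below by half this constant, giving $N$; the subsequence argument is in fact unnecessary.

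The main obstacle is (i): making the compactness uniform. This requires the lower comparison in (\ref{eqn:LRT}) with constants independent of $x_n$, and the fact that bounded $\ell_b$ gives a compact subset of $\ML$. The rest follows directly from uniform convergence.
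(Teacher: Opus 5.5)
Your proposal is correct and follows the paper's proof essentially step for step: you divide (\ref{eqn:LRT}) by $Q(x_n)$, get compactness in (i) from bounded intersection with a fixed filling collection, pigeonhole over the $6g-6+2p$ curves of $\mu_n$ for (ii)(a), and use uniform convergence $L_{z_n}\to L_\eta$ on the compact set from (i) for (ii)(b). The only difference is cosmetic: in (i) you bound $L_{x_n}$ directly from the definition of $Q$ (namely $\ell_{x_n}\le Q(x_n)\,\ell_b$), where the paper invokes Walsh's convergence $L_{x_n}(\gamma_i)\to L_\xi(\gamma_i)$, which shows that (i) does not actually need $x_n\to[\xi]$.
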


  \begin{proof}
    By (\ref{eqn:LRT}), for some $A=A(S)$ and every $\lambda\in\ML$,
    \begin{align*} \label{eqn:LRT2}
         A^{-1}\, \ell_{x_n}(\lambda) \le \sum_{\alpha \in \mu_n} \ell_{x_n}
     (\bar{\alpha}) \I(\lambda, \alpha) \le A\, \ell_{x_n}(\lambda). 
    \end{align*}
    Dividing the above by $Q(x_n)$ we obtain:
    \begin{equation} \label{eqn:LRT3}
         A^{-1}\, L_{x_n}(\lambda) \le \sum_{\alpha \in \mu_n}  \I \big (\lambda, \lambda_{n,\alpha} \big) \le A\, L_{x_n}(\lambda). 
    \end{equation}

    Let $\gamma=\{\gamma_1,\ldots,\gamma_k\}$ be a finite filling
    collection of curves, such as a marking, and set
    \[ J(\lambda) := i(\gamma, \lambda) = \sum_i i(\gamma_i, \lambda), \quad \lambda \in \ML. \] 
    Since $\gamma$ is filling, sublevel sets of $J$ are compact. 

    By substituting $\lambda=\gamma_i$ in  (\ref{eqn:LRT3}), we have, for any $\beta \in \mu_n$
    \begin{align*}
      J(\lambda_{n,\beta}) 
      &\le \sum_{\alpha \in \mu_n} J \big( \lambda_{n,\alpha} \big) = \sum_{\alpha \in \mu_n} \sum_i i \big(\gamma_i, \lambda_{n,\alpha} \big) 
      =  \sum_i \sum_{\alpha \in \mu_n} i \big(\gamma_i, \lambda_{n,\alpha} \big) \le A
      \sum_i L_{x_n}(\gamma_i).
    \end{align*}
    By Proposition \ref{prop:convergence}, for each $i$, \[ L_{x_n}(\gamma_i) \to L_\xi(\gamma_i) = \frac{i(\xi,\gamma_i)}{Q(\xi)},\] so the right hand side of the above is bounded. This shows the existence of $B_0>0$ such that \[ J(\lambda_{n,\alpha}) \le B_0, \quad \forall \alpha \in \mu_n \text{ and } \forall n. \] Hence $\bigcup_n \{\lambda_{n,\alpha}\}_{\alpha \in \mu_n}$ lies in a compact subset of $\ML$.

    To prove (ii), apply (\ref{eqn:LRT3}) with $\lambda=\eta$. Since \[ L_{x_n}(\eta) \to L_{\xi}(\eta)= \frac{i(\eta,\xi)}{Q(\xi)} > 0, \] and $\# \mu_n=6g-6+2p$ is fixed,  there exist $B>0$ and $\alpha_n \in \mu_n$ such that \[ i \big(\eta, \lambda_{n,\alpha_n} \big) \ge B > 0, \,\, \forall n. \] 
    
    Let $\mathcal{K} \subset \ML$ be a compact set containing all $\lambda_{n,\alpha_n}$. If $z_n \to [\eta]$, then  \[ \sup_{\lambda \in \mathcal{K}} \left| L_{z_n}(\lambda) - L_\eta(\lambda) \right| \to 0. \] Hence, for all sufficiently large $n$, we have \[ L_{z_n}(\lambda_{n,\alpha_n}) \ge \frac{1}{2} L_\eta(\lambda_{n,\alpha_n}) \ge \frac{1}{2} \frac{i(\eta,\lambda_{n,\alpha_n})}{Q(\eta)} \ge \frac{B}{2 Q(\eta)} > 0. \qedhere\] 
  \end{proof}

   \begin{proposition} \label{prop:divergence}
     Suppose $[\xi],[\eta]\in\PML$ satisfy $i(\xi,\eta)>0$. For any sequences $x_n\to[\xi]$ and $z_n\to[\eta]$, $\Phi_{z_n}(x_n)\to\infty.$
   \end{proposition}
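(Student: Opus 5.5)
We bound $\Phi_{z_n}(x_n)$ from below by a single test lamination drawn from the short marking of $x_n$. By (\ref{eqn:normalized}),
\[ \Phi_{z_n}(x_n) = \log \max_{\lambda\in\ML^*} \frac{L_{z_n}(\lambda)}{\ell_{x_n}(\lambda)} \ge \log \frac{L_{z_n}(\lambda_{n,\alpha_n})}{\ell_{x_n}(\lambda_{n,\alpha_n})}, \]
where $\mu_n$ is a short marking on $x_n$ and $\alpha_n\in\mu_n$ is the curve provided by Lemma \ref{lem:nondisjointness}(ii). The lemma applies because $i(\xi,\eta)>0$.

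For the numerator, Lemma \ref{lem:nondisjointness}(ii)(b) shows that $L_{z_n}(\lambda_{n,\alpha_n})$ is bounded below by a positive constant $c>0$ for all $n\ge N$.

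For the denominator, homogeneity gives
\[ \ell_{x_n}(\lambda_{n,\alpha_n}) = \frac{\ell_{x_n}(\bar\alpha_n)\,\ell_{x_n}(\alpha_n)}{Q(x_n)} \le \frac{K_S}{Q(x_n)}, \]
using (\ref{eqn:curve_and_dual}). Combining the two bounds, for $n\ge N$,
\[ \Phi_{z_n}(x_n) \ge \log\frac{c\,Q(x_n)}{K_S}. \]
Lemma \ref{lem:divergence} gives $Q(x_n)\to\infty$, so $\Phi_{z_n}(x_n)\to\infty$.

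The substantive work sits in Lemma \ref{lem:nondisjointness}. It needs the compactness in part (i) so that the uniform convergence $L_{z_n}\to L_\eta$ from Proposition \ref{prop:convergence} can be applied along the moving test laminations $\lambda_{n,\alpha_n}$. Since that lemma is already established, the only point to check here is that the denominator bound is uniform. This is exactly what the product estimate (\ref{eqn:curve_and_dual}) supplies: it holds for every curve in every short marking, with a constant depending only on $S$.
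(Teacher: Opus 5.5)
Your proposal is correct and follows the paper's proof essentially verbatim. Both use the single test lamination $\lambda_{n,\alpha_n}$ from Lemma \ref{lem:nondisjointness}, bound the numerator via part (ii)(b), bound the denominator by $K_S/Q(x_n)$ using (\ref{eqn:curve_and_dual}), and conclude with $Q(x_n)\to\infty$ from Lemma \ref{lem:divergence}.
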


  \begin{proof}
    Let $\mu_n$ be a short marking on $x_n$, and let $ \lambda_n=\lambda_{n,\alpha_n}$ be given by the preceding lemma. Then $L_{z_n}(\lambda_n)\ge B>0$ eventually,
    while (\ref{eqn:curve_and_dual})  gives
    \[
    \ell_{x_n}(\lambda_n)
    =
    \frac{\ell_{x_n}(\bar\alpha_n)\ell_{x_n}(\alpha_n)}
    {Q(x_n)}
    \le
    \frac{K_S}{Q(x_n)}.
    \]
    Combining the two bounds above, we obtain for all $n \ge N$:
    \begin{align*}
        e^{\Phi_{z_n}(x_n)} = \max_{\lambda \in \ML^*} \frac{L_{z_n}(\lambda)}{\ell_{x_n}(\lambda)} \ge \frac{L_{z_n}(\lambda_n}{\ell_{x_n} \big( \lambda_n)} \ge \frac{B}{K_S} Q(x_n).
    \end{align*} 
    By Lemma \ref{lem:divergence}, $Q(x_n) \to \infty$, so $\Phi_{z_n}(x_n) \to \infty$ as desired. \qedhere
  \end{proof}

  We immediately obtain:
  
  \begin{corollary} \label{cor:direction1}
      $\star^{\text{Th}}\big( [\xi] \big) \subset Z \big( [\xi] \big) $.
  \end{corollary}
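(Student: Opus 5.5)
We argue by contraposition: we show that if $[\eta] \notin Z\big([\xi]\big)$, then $[\eta] \notin \star^{\text{Th}}\big([\xi]\big)$. So suppose $i(\xi,\eta)>0$. Since $Z\big([\xi]\big)$ is closed and $\star^{\text{Th}}\big([\xi]\big)$ is the closure of $\bigcup_{C\ge0}\star([\xi],C)$, it suffices to show that $[\eta]\notin\star([\xi],C)$ for every $C\ge 0$, and moreover that this holds uniformly near $[\eta]$. Concretely, we will show there is a neighborhood $U$ of $[\eta]$ in $\PML$ that is disjoint from every $\star([\xi],C)$. Then $U$ misses their union, hence misses its closure, so $[\eta]\notin\star^{\text{Th}}\big([\xi]\big)$.

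By continuity of $i$ on $\ML$, choose a neighborhood $U$ of $[\eta]$ such that $i(\xi,\eta')>0$ for all $[\eta']\in \overline U$. Fix $[\eta']\in U$ and $C\ge0$, and suppose for contradiction that $[\eta']\in\star([\xi],C)$. The Thurston compactification $\TC$ is a closed ball, hence metrizable and first countable. So the converse direction of Lemma \ref{lem:necc_star} produces sequences $x_n\to[\xi]$ and $z_n\to[\eta']$ with
\[ \limsup_{n\to\infty}\bigl(\dL(x_n,z_n)-\dL(b,z_n)\bigr)\le C. \]
The quantity in parentheses is exactly $\Phi_{z_n}(x_n)$ with our convention. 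But Proposition \ref{prop:divergence}, applied to $[\xi]$ and $[\eta']$ (valid since $i(\xi,\eta')>0$), gives $\Phi_{z_n}(x_n)\to\infty$, which is a contradiction. Hence $U\cap\star([\xi],C)=\emptyset$ for all $C$, and we conclude as above.

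The only point needing care is the closure in the definition of $\star$. Handling a single point $[\eta]$ is not enough, because the star is the closure of a union. This is why we use the open condition $i(\xi,\eta')>0$ to obtain a whole neighborhood $U$. Alternatively, one can observe that each $\star([\xi],C)\subset Z\big([\xi]\big)$ and then use that $Z\big([\xi]\big)$ is closed. Everything else is a direct combination of Lemma \ref{lem:necc_star} and Proposition \ref{prop:divergence}.
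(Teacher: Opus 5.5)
Your proof is correct and matches the paper's: contraposition, with Proposition \ref{prop:divergence} and the converse of Lemma \ref{lem:necc_star} giving $[\eta]\notin\star^{\text{Th}}([\xi],C)$ for every $C\ge 0$, followed by passing to the closure. Your open neighborhood $U$ is just the openness of the complement of $Z([\xi])$, which is the closedness the paper invokes; your stated alternative is the paper's argument verbatim.
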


  \begin{proof}
    We prove the contrapositive. If $i(\xi,\eta)>0$, Proposition \ref{prop:divergence} shows that for every
    $x_n\to[\xi]$ and $z_n\to[\eta]$,
    \[
    \dL(x_n,z_n)-\dL(b,z_n)
    =
    \Phi_{z_n}(x_n)\to\infty.
    \]  
    Lemma \ref{lem:necc_star} therefore implies $[\eta] \notin \star^{\text{Th}} \big( [\xi],C \big)$ for any $C \ge 0$. Equivalently, $\bigcup_C \star^{\text{Th}}\big( [\xi],C \big) \subset Z\big( [\xi] \big)$. Since $Z \big( [\xi] \big)$ is closed, we obtain \[ \star^{\text{Th}} \big( [\xi] \big) = \overline{\bigcup_{C \ge 0} \star^{\text{Th}} \big( [\xi],C \big)}\subset \overline{Z\big([\xi]\big)} = Z\big([\xi]\big). \qedhere \]

  \end{proof}
 
\subsection{Shearing coordinates and the second direction}
   
   \begin{proposition} \label{prop:disjoint}

     Suppose $i(\xi,\eta)=0$. Then for every neighborhood $V \subset \TC$ of $[\xi]$,
    \[
    \inf_{x\in V\cap \T}\Phi_\eta(x)=-\infty.
    \]
   \end{proposition}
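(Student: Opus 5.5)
The plan is to build explicit points $x\in\T$ converging to $[\xi]$ whose length functions dominate a large multiple of $i(\eta,\param)$. Then the maximum in (\ref{eqn:normalized}) defining $\Phi_\eta(x)$ is forced to be very negative. The construction uses Bonahon--Thurston shearing coordinates, and the comparison uses Theorem \ref{thm:papa}. Since $\Phi_\eta(x)=\log\sup_{\alpha}\frac{i(\eta,\alpha)}{\ell_x(\alpha)}-c_b$, where $c_b=\log\sup_\alpha i(\eta,\alpha)/\ell_b(\alpha)$ is a constant, it suffices to find $x_n\to[\xi]$ and $K_n\to\infty$ with
\[
\ell_{x_n}(\alpha)\ \ge\ K_n\, i(\eta,\alpha)\qquad\text{for all }\alpha\in\curve .
\]

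\textbf{Step 1 (the target foliations).} Because $i(\xi,\eta)=0$, the sum $\xi+\eta$ is again a measured lamination: the supports have no transverse intersections, so the transverse measures simply add. The same holds for $F_n:=n^2\xi+n\eta$, and intersection is additive, $i(F_n,\alpha)=n^2 i(\xi,\alpha)+n\,i(\eta,\alpha)$. Hence $i(F_n,\alpha)\ge n\, i(\eta,\alpha)$ for every curve $\alpha$. Moreover $[F_n]=[\xi+\tfrac1n\eta]\to[\xi]$, and $F_n$ leaves every compact subset of $\MF(S)\cong\ML$.

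\textbf{Step 2 (realizing $F_n$ as shearing coordinates).} We need a maximal lamination $\lambda$ such that every $F_n$ lies in $\MF(\lambda)$. The lamination $\lambda$ must be measurable maximal when $S$ is closed, and maximal ideal when $S$ is punctured. By Proposition \ref{prop:transversality}, it suffices that $|\xi|\cup|\eta|$, which is the support of every $F_n$, be totally transverse to $\lambda$. In the closed case, take $\lambda$ to be a maximal completion of a uniquely ergodic minimal filling lamination $\nu$ chosen with $i(\nu,\xi)>0$ and $i(\nu,\eta')>0$ for every component $\eta'$ of $\xi+\eta$. Such a $\nu$ exists because uniquely ergodic laminations are dense. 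Each component is then totally transverse to $\nu$, and the same argument as for Corollary \ref{cor:unique_erg} applies. In the punctured case, we similarly take a maximal ideal lamination whose leaves spiral to the punctures and whose non-isolated part is such a $\nu$. I expect this step to be the main obstacle: one must check that the added isolated leaves running into the cusps still meet every leaf of $|\xi|\cup|\eta|$ infinitely often, and that half-leaves of $\lambda$ not tending to punctures meet the support. If this fails for a specific choice, I would try applying a suitable mapping class to a fixed maximal ideal lamination, using genericity of transversality.

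\textbf{Step 3 (conclusion).} Let $x_n=\operatorname{Sh}_\lambda^{-1}(F_n)$. By Theorem \ref{thm:papa}, $x_n\to[\xi]$, so $x_n\in V$ eventually. Also $\ell_{x_n}(\alpha)\ge i(F_n,\alpha)\ge n\, i(\eta,\alpha)$ for every curve $\alpha$. Therefore
\[
\Phi_\eta(x_n)\le \log\sup_\alpha\frac{i(\eta,\alpha)}{n\,i(\eta,\alpha)}-c_b=-\log n-c_b\longrightarrow-\infty,
\]
where curves with $i(\eta,\alpha)=0$ contribute $0$ to the supremum and so do not affect the bound. This proves $\inf_{x\in V\cap\T}\Phi_\eta(x)=-\infty$.
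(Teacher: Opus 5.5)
Your Steps 1 and 3 follow the paper's argument. The paper uses $F_t=e^t\xi+K\eta$ with $K$ fixed and then lets $K\to\infty$; your single sequence $n^2\xi+n\eta$ handles both limits at once, which is fine. The gap is in Step 2: the lamination $\lambda$ you describe does not satisfy the hypotheses of Theorem \ref{thm:papa}, and you leave this step unresolved.

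\textbf{Punctured case.} A maximal ideal lamination is, by definition, a maximal lamination with no closed leaves whose leaves all go out the punctures. In other words, it is an ideal triangulation: finitely many isolated leaves, each exiting a cusp at both ends. It cannot have a non-isolated part $\nu$, and its leaves go straight into the cusps rather than spiraling. So the object you propose does not exist.

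Your stated worry also reverses Thurston's condition. Half-leaves of $\lambda$ that tend to punctures are exempt; it is the leaves of $|F_n|$ that must cross $\lambda$ infinitely often in both directions.

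The missing observation is that no special choice of $\lambda$ is needed. For any maximal ideal lamination $\lambda$:
\begin{itemize}
\item the half-leaf condition is vacuous;
\item every leaf of $|F_n|=|\xi|\cup|\eta|$ stays in a compact part of $S$, so it is never a leaf of $\lambda$;
\item such a leaf cannot eventually remain inside one ideal triangle, because a geodesic ray trapped in an ideal triangle converges to an ideal vertex, i.e.\ goes into a cusp. Hence it crosses $\lambda$ transversely infinitely often in both directions.
\end{itemize}
Proposition \ref{prop:transversality} then gives $F_n\in\MF(\lambda)$. This is exactly the paper's argument.

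\textbf{Closed case.} Unless $\nu$ is already maximal, a maximal completion of $\nu$ adds isolated non-closed leaves. These carry no transverse measure, so the completion is not a measurable maximal lamination as Theorem \ref{thm:papa} requires. The fix is to take $\nu$ itself maximal (generic uniquely ergodic laminations are) with $[\nu]\neq[\xi],[\eta]$. Then $F_n\notin\RR_{\ge0}\nu$: otherwise $|\xi|\subset|\nu|$, minimality forces $|\xi|=|\nu|$, and unique ergodicity forces $[\xi]=[\nu]$. So Corollary \ref{cor:unique_erg} applies directly, which is the paper's choice.

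With these two corrections, your proof goes through.
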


   \begin{proof}
    
     Fix $K\ge 1$. Because $i(\xi,\eta)=0$, the two measured laminations may be realized simultaneously, and every positive linear combination \[ F_t = e^t \xi + K \eta\] is again a measured lamination. In what follows, we view $F_t$ equivalently as a measured foliation.

     Choose $\mu \in \ML$ such that $i(\mu,\xi) > 0$. Then \[ i(\mu,F_t) = e^t i(\mu,\xi) + K \, i(\mu,\eta) \to \infty.\] Thus $F_t$ leaves every compact subset of $\MF(S)$. Moreover, we have $[F_t] \to [\xi]$ since
     \[ e^{-t} F_t = \xi + K e^{-t} \eta  \to \xi.\]

    If $S$ is closed, choose a maximal uniquely ergodic measured lamination $\lambda \in \ML$ with projective class distinct from $[\xi]$ and $[\eta]$. Then Corollary \ref{cor:unique_erg} gives $F_t \in \MF(\lambda)$. If $S$ is punctured, choose a maximal ideal geodesic lamination $\lambda$. Since $F_t$ has compact support, every parametrized leaf of $F_t$ intersects the ideal triangulation infinitely often. Meanwhile, every leaf of $\lambda$ tends to a puncture on both sides, so $F_t$ is totally transverse to $\lambda$. Proposition \ref{prop:transversality} gives $F_t \in \MF(\lambda)$.
     
     Let $x_t \in \T$ have $\lambda$--shearing coordinates $F_t$. By Theorem \ref{thm:papa}, $x_t \to [\xi]$, and for every curve $\alpha$, we have $i(F_t,\alpha) \le \ell_{x_t}(\alpha)$. Therefore
     \[
     \ell_{x_t}(\alpha) 
      \ge i(F_t,\alpha) = e^t i(\xi, \alpha) + K i(\eta,\alpha) \ge K i(\eta,\alpha).
      \]
     This gives  us 
      \[\sup_{\alpha \in \curve} \frac{i(\eta,\alpha)}{\ell_{x_t}(\alpha)} \le \frac{1}{K} \quad \Longrightarrow \quad \sup_{\alpha \in \curve} \frac{L_\eta(\alpha)}{\ell_{x_t}(\alpha)} \le \frac{1}{K Q(\eta)}. \]
    
    Since $K\ge 1$ was arbitrary, for any $M>0$ we can find a path $x_t \to [\xi]$ such that \[ \Phi_\eta(x_t) = \log \sup_{\alpha \in \curve} \frac{L_\eta(\alpha)}{\ell_{x_t}(\alpha)} \le \log \frac{1}{KQ(\eta)} \le -M.\] Since $x_t \in V$ for all sufficiently large $t$, the result follows. \qedhere
      
   \end{proof}
   
   The second direction is now easily obtained.
   
   \begin{corollary}  \label{cor:direction2}
    
     $Z \big( [\xi] \big) \subset \star_b^{\text{Th}} \big( [\xi] \big)$. 

   \end{corollary}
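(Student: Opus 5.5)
The plan is to combine Proposition \ref{prop:disjoint} with Lemma \ref{lem:suff_star} at the level $C=0$. Let $[\eta]\in Z\big([\xi]\big)$, so $i(\xi,\eta)=0$. Regard $[\eta]$ as a point of the horofunction boundary. This is legitimate by Walsh's Theorem \ref{thm:horo}: the limiting horofunction attached to $[\eta]$ is exactly $\Phi_\eta$. In the notation of (\ref{eqn:normalized}), this is the function appearing in Proposition \ref{prop:disjoint}. That proposition says that for every neighborhood $V$ of $[\xi]$ in $\TC$,
\[ \inf_{x\in V\cap\T}\Phi_\eta(x)=-\infty<0. \]
Lemma \ref{lem:suff_star} with $C=0$ then gives $[\eta]\in\star^{\text{Th}}\big([\xi],0\big)=\star_b^{\text{Th}}\big([\xi]\big)$.

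The one point to check is the hypothesis of Lemma \ref{lem:suff_star}. Its proof needs a neighborhood $U$ of $[\eta]$ on which $\Phi_z(x)<0$ for a fixed $x$. This follows from the definition of the horofunction topology: pointwise convergence $\Phi_{z}\to\Phi_\eta$ at the fixed point $x$, as $z\to[\eta]$, holds because $\overline{\T}$ carries the topology of uniform convergence on compacta. Theorem \ref{thm:horo} identifies this topology with the Thurston topology, so neighborhoods of $[\eta]$ in $\TC$ are the right ones.

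Alternatively, one can argue sequentially, which avoids the topology issue. Take the path $x_t\to[\xi]$ from the proof of Proposition \ref{prop:disjoint} with $\Phi_\eta(x_t)\le -1$. For a fixed such $x=x_t$, pick $z_n\to[\eta]$ and use $\Phi_{z_n}(x)\to\Phi_\eta(x)$. This gives sequences $x_{t_n}$, $z_n$ with $\dL(x_{t_n},z_n)-\dL(b,z_n)<0$. Lemma \ref{lem:necc_star} with $C=0$ then concludes, via a diagonal choice as in Lemma \ref{lem:semicontinuity} (take $\xi_n=\xi$ and $\eta_n=\eta$).

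No step here looks like a real obstacle. The substantive work was already done in Proposition \ref{prop:disjoint}; the only care needed is the identification of $\Phi_\eta$ with the horofunction limit, which Theorem \ref{thm:horo} supplies.
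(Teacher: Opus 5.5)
Your main argument is the paper's proof: Proposition \ref{prop:disjoint} gives $\inf_{x\in V\cap\mathcal{T}(S)}\Phi_\eta(x)=-\infty$ for every neighborhood $V$ of $[\xi]$, and Lemma \ref{lem:suff_star} with $C=0$ then yields $[\eta]\in\star_b^{\text{Th}}([\xi])$. Your extra check that $z\mapsto\Phi_z(x)$ is continuous at $[\eta]$ via Theorem \ref{thm:horo} is what the paper uses implicitly, and your sequential variant is also valid; note, though, that it relies on the same convergence $\Phi_{z_n}(x)\to\Phi_\eta(x)$ rather than avoiding it, and a direct diagonal choice plus Lemma \ref{lem:necc_star} suffices there without invoking Lemma \ref{lem:semicontinuity}.
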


   \begin{proof}
     Let $i(\eta,\xi)=0$. By Proposition \ref{prop:disjoint}, for all neighborhoods $V$ of $[\xi]$, \[ \inf_{x \in V \cap \T} \Phi_\eta(x) = -\infty < 0.\] Therefore, by Lemma \ref{lem:suff_star}, we have $[\eta] \in \star^{\text{Th}} \big( [\xi],0\big) = \star_b^{\text{Th}} \big([\xi]\big).$
   \end{proof}

  \subsection{Applications} \label{sec:proofs}
  
  Theorem \ref{introthm:stars} immediately gives Corollary \ref{introcor:hyperbolic_points}. 

  \begin{proof}[Proof of Corollary \ref{introcor:PA}]
  
    Recall the complexity of a surface $S=S_{g,p}$ is $3g-3+p$. If $f \in \mcg$ is pseudo-Anosov, its limit set consists of its stable and unstable projective laminations, which are uniquely ergodic and hence hyperbolic. If $f$ is instead infinite-order reducible with reducing multicurve $C$, then every boundary accumulation point $[\mu] \in \PML$ of an $\langle f \rangle$-orbit satisfies $i(\mu,C)=0.$ Consequently, $Z \big([\mu]\big)$ contains the projective class of every component of $C$. If $[\mu]$ is not the projective class of a component of $C$, then $Z \big( [\mu] \big)$ is not a singleton. If $[\mu]$ is the class of a component of $C$, then $Z \big( [\mu] \big)$ is again not a singleton when $S$ has complexity at least two. When $S$ has complexity one, some power of $f$ is a power of a Dehn twist, and hence $f$ has only one boundary accumulation point. Thus, in every case, $f$ is not strictly hyperbolic. Finally, a finite-order mapping class has no boundary accumulation points. \qedhere
    
  \end{proof}
  
  \begin{proof}[Proof of Corollary \ref{introcor:isometries}]
     Let $f: \T \to \T$  be an isometry of the Thurston metric. By Proposition \ref{prop:assumptions} and Theorem \ref{thm:horo}, the map $f$ induces a homeomorphism
     $ \partial f: \PML \to \PML$ of the horofunction boundary.

    Since isometries preserve stars and Theorem \ref{introthm:stars} identifies stars with zero sets, for every $ [\xi]\in\PML$, we have
    \[ \partial f\Bigl(Z \big( [\xi] \big)\Bigr) = Z \Bigl(\partial f \big( [\xi] \big)\Bigr).\]
    The zero set $Z \big( [\xi] \big)$ has codimension one in $\PML$ if and only if $[\xi]$ is represented by a simple closed curve; see for instance \cite{OP18,Alb19}. Since homeomorphisms preserve topological dimension, $\partial f$ preserves the subset
    of curves $\curve \subset \PML$. 
    Moreover, $\partial f$ preserves the star relation, which agrees with the disjointness relation on $\curve$. Hence $\partial f$ induces an automorphism of the curve graph $\cc$. By \cite{Iva97,Luo00}, there exists an extended mapping class $g$ whose action on $\cc$ agrees with the action induced by $\partial f$. It follows that the boundary map induced by $g^{-1} f$ acts trivially on $\curve$. Since $\curve \subset \PML$ is dense, this boundary map is the identity on \PML. By the argument of \cite[Lemma~7]{Wal14}, $g^{-1} f$ is the identity on $\T$. \qedhere
   \end{proof}

\section{\Teich metric} \label{sec:Teich}

  We now equip $\T$ with the \Teich metric $\dT$, while retaining the Thurston compactification. The \Teich distance is defined by
  \[ \dT(x,y) = \frac{1}{2} \log \inf_\varphi \text{K}(\varphi), \]
  where $\varphi:x\to y$ ranges over $K$--quasiconformal homeomorphisms in the appropriate homotopy class. This metric is symmetric, complete, proper, and geodesic. Kerckhoff's formula states that \[ \dT(x,y) = \frac12\log \sup_{\alpha\in \curve} \frac{\Ext_y(\alpha)}{\Ext_x(\alpha)}, \] where extremal length extends continuously to $\ML$ and is homogeneous of degree two \cite{Ker80}. Consequently Kerckhoff's formula can equivalently be maximized over any compact section of $\PML$ in $\ML$.

  We will use the standard estimates \cite{Ker80,Mas85}:
  \begin{equation} \label{eqn:ext-hyp}
      \Ext_x(\mu) \ge \frac{\ell_x(\mu)^2}{2\pi|\chi(S)|}, \qquad \mu\in\ML,
  \end{equation}
  and, for every curve $\alpha$, 
  \begin{equation} \label{eqn:maskit}
      \Ext_x(\alpha) \le \frac{\ell_x(\alpha)}{2} e^{\ell_x(\alpha)/2}.
  \end{equation}
   

\begin{lemma}\label{lem:ext-hyp}
Let $x_n\in\T$ be a sequence converging to
$[\xi]\in\PML$, and let
$\lambda_n\in\ML$ be a sequence converging to
$\lambda\in\ML$. Then, $i(\xi,\lambda)>0$ implies 
\[
\Ext_{x_n}(\lambda_n)\longrightarrow\infty
\]
In particular, if the sequence $\Ext_{x_n}(\lambda_n)$ is bounded, then $i(\xi,\lambda)=0$.
\end{lemma}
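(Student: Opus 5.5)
The idea is to pass from extremal length to hyperbolic length using (\ref{eqn:ext-hyp}), and then control hyperbolic length through the Thurston compactification. By (\ref{eqn:ext-hyp}),
\[
\Ext_{x_n}(\lambda_n) \ge \frac{\ell_{x_n}(\lambda_n)^2}{2\pi|\chi(S)|},
\]
so it suffices to show $\ell_{x_n}(\lambda_n)\to\infty$ whenever $i(\xi,\lambda)>0$. The second sentence of the lemma is then just the contrapositive.

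To prove the length divergence, I will use a short marking $\mu_n$ on $x_n$ together with the estimate (\ref{eqn:LRT}). This gives
\[
\ell_{x_n}(\lambda_n) \succ \sum_{\alpha\in\mu_n}\ell_{x_n}(\bar\alpha)\, i(\lambda_n,\alpha) = Q(x_n)\sum_{\alpha\in\mu_n} i\big(\lambda_n,\lambda_{n,\alpha}\big),
\]
with $\lambda_{n,\alpha}$ as in Lemma \ref{lem:nondisjointness}. Equivalently, (\ref{eqn:LRT3}) applied to $\lambda_n$ gives $L_{x_n}(\lambda_n) \asymp \sum_\alpha i(\lambda_n,\lambda_{n,\alpha})$.

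The key step is to show that $L_{x_n}(\lambda_n)$ stays bounded away from $0$. Since $\lambda_n\to\lambda$, the set $\{\lambda_n\}\cup\{\lambda\}$ is compact in $\ML$. Proposition \ref{prop:convergence} says that $L_{x_n}\to L_\xi$ uniformly on compact sets, so
\[
L_{x_n}(\lambda_n)\to L_\xi(\lambda) = \frac{i(\xi,\lambda)}{Q(\xi)}>0.
\]
Therefore
\[
\ell_{x_n}(\lambda_n) = Q(x_n)\, L_{x_n}(\lambda_n) \ge c\, Q(x_n)
\]
for some $c>0$ and all large $n$. Lemma \ref{lem:divergence} gives $Q(x_n)\to\infty$, hence $\ell_{x_n}(\lambda_n)\to\infty$, and so $\Ext_{x_n}(\lambda_n)\to\infty$. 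With this route the short-marking detour is not needed at all.

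The main point to check is that uniform convergence on compacta legitimately handles the moving argument $\lambda_n$. Proposition \ref{prop:convergence} is stated for compact subsets of $\ML$, and $L_\xi$ is continuous because intersection is continuous, so the step is immediate. If one preferred not to rely on uniformity, a fallback is to use $L_{x_n}(\lambda_n)\ge L_{x_n}(\lambda)-|L_{x_n}(\lambda_n)-L_{x_n}(\lambda)|$. One would then need equicontinuity of the normalized length functions $L_{x_n}$, which follows from (\ref{eqn:LRT3}) together with part (i) of Lemma \ref{lem:nondisjointness}: the $L_{x_n}$ are uniformly comparable to sums of intersections with elements of a fixed compact set.
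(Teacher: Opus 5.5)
Your argument is correct and is essentially the paper's proof: the paper sets $s_n=Q(\xi)/Q(x_n)$ and applies Proposition~\ref{prop:convergence} on the compact set $\{\lambda\}\cup\{\lambda_n : n\ge 1\}$ to get $s_n\ell_{x_n}(\lambda_n)\to i(\xi,\lambda)$ with $s_n\to 0$. This is exactly your $L_{x_n}(\lambda_n)\to L_\xi(\lambda)>0$, combined with $Q(x_n)\to\infty$ and (\ref{eqn:ext-hyp}).

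The short-marking detour and the equicontinuity fallback are unnecessary and can be dropped. The fallback is also loosely justified, since multiplicative comparability with an equicontinuous family does not by itself make the $L_{x_n}$ equicontinuous.
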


 \begin{proof}

    Set \[ s_n=\frac{Q(\xi)}{Q(x_n)}. \] 
    
    By Proposition \ref{prop:convergence} and Lemma \ref{lem:divergence}, 
    \[ s_n\to0 \qquad\text{and}\qquad s_n\ell_{x_n}(\lambda_n)\to i(\xi,\lambda), \] where we use uniform convergence on the compact set $\left \{\lambda\}\cup\{\lambda_n:n\ge1 \right\}$. 
    
    Consequently, if $i(\xi,\lambda)>0$, then $\ell_{x_n}(\lambda_n)\to\infty$, and by applying (\ref{eqn:ext-hyp}) $\Ext_{x_n}(\lambda_n)\to\infty$.  \qedhere 
    \end{proof}
    
  We also use the following standard estimates for Dehn twists. 

  \begin{lemma} \label{lem:twist}
      Let $\alpha$ be a curve and $T_\alpha$ the Dehn twist about $\alpha$.
    \begin{enumerate} 
      \item[(i)] For every $L>0$ there is $C=C(L)$ such that if $\ell_x(\alpha)\le L$, then for all $k \in \ZZ$, $T_\alpha^k$ has a quasiconformal representative on $x$ with dilatation \[ K\le C(1+|k|)^2. \] 
      \item[(ii)] There are $C=C(S)$ and $\epsilon_0>0$ such that if $\ell_x(\alpha)\le\epsilon_0$, then for all $k \in \ZZ$, $T_\alpha^k$ has a quasiconformal representative with dilatation \[ K\le C\bigl(1+|k|\ell_x(\alpha)\bigr)^2. \] 
    \end{enumerate} 
  \end{lemma}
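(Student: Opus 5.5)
We build an explicit quasiconformal model of $T_\alpha^k$ supported in a collar of $\alpha$ and compute its dilatation. In the universal cover coordinates $[-r,r]\times\RR$ of the standard collar $U_x(\alpha)$ from Lemma \ref{lem:collar}, the Dehn twist is realized by the affine-in-$s$ shear
\[ \Psi_k(s,t) = \Bigl(s,\; t + k\,\ell_x(\alpha)\,\frac{s+r}{2r}\Bigr). \]
This map fixes the boundary component $s=-r$ pointwise and translates $s=r$ by $k\ell_x(\alpha)$, a deck translation. So it descends to the annulus, extends by the identity outside, and represents $T_\alpha^k$. The only subtlety is that the hyperbolic metric on the collar is $ds^2+\cosh^2(s)\,dt^2$ rather than Euclidean.

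\textbf{Step 1: reduce to a flat annulus.} Rather than work with the hyperbolic metric directly, I would use the conformal structure. The collar is conformally a round annulus. Passing to the flat coordinate $u=\int_0^s \operatorname{sech}(\sigma)\,d\sigma$ makes the metric conformal to $du^2+dt^2$. The collar becomes a flat cylinder of circumference $\ell=\ell_x(\alpha)$ and height $h=2\int_0^r\operatorname{sech}\sigma\,d\sigma$. This height satisfies $h<\pi$, and $h\ge c(L)>0$ whenever $\ell\le L$, since $r$ is bounded below in terms of $L$. Equivalently, the modulus is $m=h/\ell$.

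\textbf{Step 2: compute the dilatation of the linear shear.} On the flat cylinder I would take $\Psi(u,t)=(u,\,t+k\ell u/h)$ after translating $u$ to $[0,h]$. This is a linear map with matrix $\begin{pmatrix}1&0\\ a&1\end{pmatrix}$, where $a=k\ell/h=k/m$. For a shear with parameter $a$ the maximal dilatation is
\[ K=\Bigl(\tfrac{|a|+\sqrt{a^2+4}}{2}\Bigr)^2\le (1+|a|)^2. \]
Hence $K\le (1+|k|\ell/h)^2$. Conformal changes of coordinates do not affect dilatation, and the identity outside the collar has $K=1$, so the glued map is $K$-quasiconformal.

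\textbf{Step 3: read off the two bounds.} For (i), assume $\ell\le L$. Then $h\ge c(L)$ gives $|a|\le |k|L/c(L)$, so $K\le C(L)(1+|k|)^2$. For (ii), use $h\le\pi$ together with $h\ge c_0>0$ for $\ell\le\epsilon_0$; as $\ell\to0$ the height $h\to\pi$. This gives $|a|\le |k|\ell/c_0$, so $K\le C(1+|k|\ell)^2$ with $C$ universal.

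The main obstacle is Step 1: checking carefully that the conformal height $h$ of the standard collar is bounded above and below uniformly in the stated regimes. I would verify this by the explicit integral $h=4\arctan(\tanh(r/2))$, which lies in $(0,\pi)$ and is increasing in $r$.
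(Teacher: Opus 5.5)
Your proposal is correct. For part (i) it is the standard affine twist construction that the paper cites, carried out in the flat conformal model of the standard collar.

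For part (ii) your route differs from the paper's. The paper appeals to Minsky's product region theorem. You instead obtain (ii) from the same construction. The conformal height $h=4\arctan\bigl(\tanh(r_x(\alpha)/2)\bigr)$ of the standard collar is bounded below by some $c_0>0$ once $\ell_x(\alpha)\le\epsilon_0$; in fact $h\to\pi$ as $\ell_x(\alpha)\to 0$. So the collar has modulus $h/\ell_x(\alpha)\asymp 1/\ell_x(\alpha)$, and the shear parameter satisfies $|a|\le |k|\ell_x(\alpha)/c_0$. Only this lower bound on $h$ is used; the upper bound $h<\pi$ plays no role.

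Your approach buys a self-contained argument. It handles both parts with the single estimate $K\le\bigl(1+|k|\ell_x(\alpha)/h\bigr)^2$, with a universal constant and for any choice of $\epsilon_0$. Minsky's theorem is heavier machinery. It gives two-sided estimates, up to additive error, for $d_T(x,T_\alpha^k x)$, together with the product structure of the thin part. None of that is needed here: the lemma asserts only an upper bound, and its application in Lemma \ref{lem:false} uses only upper bounds.

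One cosmetic point: the Fermi-coordinate shear $\Psi_k$ in your opening paragraph is never used. The map whose dilatation you actually compute is the shear that is linear in the conformal coordinate $u$, and that is the right one.
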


  \begin{proof}
    Part (i) follows from the standard affine twist construction \cite[Section 2.4]{Lei04}, and part (ii) from Minsky's product region theorem \cite[Theorem 6.1]{Min96}. \qedhere
  \end{proof}

  \begin{lemma}\label{lem:twist-length}
    Let $\gamma$ be a simple closed curve and let $y_n\in\T$ be a
    sequence which degenerates only along $\gamma$: namely, fix a pants decomposition P containing $\gamma$, and suppose that all Fenchel–Nielsen coordinates of $y_n$
	with respect to P are fixed except 
    \[
    \ell_{y_n}(\gamma)\to 0.
    \]
    
    Let $A$ be a multicurve with support $
    \alpha_1,\ldots,\alpha_p$
    such that $i(A,\gamma)=0$, and let
    \[
    T_n=\prod_{i=1}^p T_{\alpha_i}^{m_{i,n}},
    \quad m_{i,n}\geq 0.
    \]
    We allow $\gamma$ to be one of the components of $A$. Let $l_n$ denote the radius of the standard collar about $\gamma$ on $y_n$. 
    
    Then for every curve $\delta$ there is $C_\delta>0$, independent of $n$ and of the $m_{i,n}$, such that
    \[
    \left|
    \ell_{y_n}(T_n\delta)
    -
    \sum_{i=1}^p
    m_{i,n}\ell_{y_n}(\alpha_i)i(\alpha_i,\delta)
    -
    2l_n i(\gamma,\delta)
    \right|
    \leq C_\delta.
    \]
    \end{lemma}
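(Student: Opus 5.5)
We argue geometrically, decomposing the geodesic representative of $T_n\delta$ on $y_n$ into pieces inside and outside the standard collar $U_n=U_{y_n}(\gamma)$, and comparing each piece with a model. Since $A$ is a multicurve with $i(A,\gamma)=0$, the product $T_n$ is well defined and commutes with twisting along $\gamma$. The Fenchel--Nielsen data of $y_n$ other than $\ell(\gamma)$ are fixed, so the thick part $X_n = y_n\setminus U_n$ converges geometrically, after reparametrizing the collar boundary, to a fixed compact hyperbolic surface with boundary. We treat the components $\alpha_i\neq\gamma$ of $A$ as curves living in this thick part, and $\gamma$ (if present) separately.

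\textbf{Step 1: reduce to arcs.} We put $\delta$ in minimal position with respect to $\gamma$ and with respect to $A$. Then $T_n\delta$ is cut by $\partial U_n$ into $i(\gamma,\delta)$ crossing arcs of the collar and $i(\gamma,\delta)$ arcs (or one closed curve if $i(\gamma,\delta)=0$) in $X_n$. A standard comparison (straightening a piecewise geodesic with a bounded number of pieces and uniformly bounded angle defects, using that the collar boundary has length at most $C_0$ by Lemma~\ref{lem:collar}) shows that $\ell_{y_n}(T_n\delta)$ equals, up to an additive constant depending only on $\delta$, the sum of the lengths of the geodesic representatives of these pieces rel endpoints sliding on $\partial U_n$.

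\textbf{Step 2: collar arcs.} Each crossing arc receives twisting $m_{\gamma,n}$ (zero if $\gamma\notin A$) plus a bounded amount coming from the fixed twist coordinate, measured relative to the normalized coordinates. So its winding number is $m_{\gamma,n}+O(1)$. By Lemma~\ref{lem:winding}, with $L$ the fixed bound on $\ell(\gamma)$, its length is
\[
2l_n + m_{\gamma,n}\ell_{y_n}(\gamma) + O(1).
\]
Here the $O(1)\cdot\ell_{y_n}(\gamma)$ term is absorbed because $\ell_{y_n}(\gamma)$ is bounded. This produces the terms $2l_n i(\gamma,\delta)$ and $m_{\gamma,n}\ell(\gamma)i(\gamma,\delta)$.

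\textbf{Step 3: thick-part arcs.} In $X_n$, which varies in a compact family, we must show that an arc (or curve) obtained from a fixed arc by twisting $m_{i,n}$ times about curves $\alpha_i\ne\gamma$ has length $\sum_i m_{i,n}\ell(\alpha_i)\,i(\alpha_i,\text{arc})+O(1)$, uniformly in the $m_{i,n}\ge0$. We apply Lemma~\ref{lem:winding} in the collars of the $\alpha_i$; these have lengths bounded above and below, and their collars are disjoint from each other and from $U_n$. Outside all the collars, the remaining segments are homotopic to fixed segments in a compact region. Positivity $m_{i,n}\ge0$ together with the disjointness of the $\alpha_i$ prevents cancellation between different twists, so the winding contributions add. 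Summing over the arcs gives exactly $m_{i,n}\ell(\alpha_i)i(\alpha_i,\delta)$.

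The main obstacle is the uniformity in Step 1 and Step 3: we need a concatenation estimate with error independent of the $m_{i,n}$ and of $n$. We would handle it via the standard fact that a broken geodesic whose pieces are long relative to bounded-length connectors along collar boundaries is a uniform quasi-geodesic with additive error. Alternatively, we would bound the length above by the explicit piecewise path and below by using disjoint collars: each crossing of a collar costs at least the Lemma~\ref{lem:winding} amount minus $C$, and there are at most $i(A\cup\gamma,\delta)$ crossings.
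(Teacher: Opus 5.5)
Your decomposition is the paper's: cut along the standard collars of $\gamma$ and of the $\alpha_i$, apply Lemma~\ref{lem:winding} to each crossing arc, and absorb the pieces outside the collars into $O_\delta(1)$. The upper bound from the explicit concatenated path is fine. The gap is the lower bound, which is where the content of the lemma lies. None of the mechanisms you offer for it works as stated.

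\begin{itemize}
\item In Step~1, if endpoints may slide freely on $\partial U_n$, every crossing arc of the $\gamma$-collar is homotopic to the orthogonal arc of length about $2l_n$. Your comparison would then drop the term $m_{\gamma,n}\ell_{y_n}(\gamma)\,i(\gamma,\delta)$, which is unbounded as $m_{\gamma,n}$ varies.
\item The quasi-geodesic straightening principle needs long pieces meeting at angles bounded away from backtracking. Here the complementary pieces, and the crossings of the $\alpha_i$-collars with $m_{i,n}=0$, can be short, and the angles at the boundary connectors are uncontrolled.
\item Your fallback (``each crossing costs at least the Lemma~\ref{lem:winding} amount'') applies Lemma~\ref{lem:winding} to the crossing arcs $\sigma'$ of the \emph{geodesic representative} of $T_n\delta$. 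But the winding numbers you computed in Steps~2--3 belong to the model arcs, i.e.\ the arcs of $\delta$ after twisting. You never show $|w(\sigma',\beta)|\,\ell_{y_n}(\beta)\ge m_{\beta,n}\ell_{y_n}(\beta)-O_\delta(1)$, and without it the lower bound has no content.
\end{itemize}

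The missing step, which is how the paper argues, transfers the twisting to the geodesic. Cut the geodesic representative of $T_n\delta$ along the collar boundaries. Slide the endpoints of its complementary pieces along those boundaries to the endpoints of the corresponding pieces of $\delta$. This costs $O_\delta(1)$: every collar boundary has uniformly bounded length (Lemma~\ref{lem:collar} for $\gamma$, bounded geometry for $\beta\neq\gamma$), and there are only $2\sum_\beta i(\beta,\delta)$ endpoints. After this adjustment the complementary pieces can be taken to be those of $\delta$, so the extended collar arcs have winding numbers $w(\sigma,\beta)\pm m_{\beta,n}$. Then Lemma~\ref{lem:winding}, the inequality $\bigl||w\pm m|-m\bigr|\le |w|$, and disjointness of the collars give the lower bound.

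A smaller point: in Step~2, ``winding number $m_{\gamma,n}+O(1)$'' presumes the Fenchel--Nielsen twist is measured in turns. If it is measured as a length $\tau$, the winding of $\delta$'s arcs is about $\tau/\ell_{y_n}(\gamma)$, which is unbounded. What you actually need, and what holds in either convention, is $|w(\sigma,\gamma)|\,\ell_{y_n}(\gamma)=O_\delta(1)$.
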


    \begin{proof}
Consider the multicurve
\[
\widehat A=\{\gamma\}\cup\operatorname{supp}(A).
\]
For each $\beta\in\widehat A$, let $U_{\beta,n}$ be its standard
collar in $y_n$, of radius $l_{\beta,n}$. Let $m_{\beta,n}$ denote
the exponent of $T_\beta$ in the multitwist $T_n$, where a missing
factor is understood to have exponent zero. Equivalently,
\[
m_{\beta,n}:=
\begin{cases}
m_{i,n}, & \text{if }\beta=\alpha_i\text{ for some }
i\in\{1,\ldots,p\},\\
0, & \text{if }\beta=\gamma\notin\operatorname{supp}(A).
\end{cases}
\]


     Since $y_n$ degenerates only along $\gamma$, for every $\beta \neq \gamma$ the geometry of $U_{\beta,n}$ stays in a compact family; in particular $l_{\beta,n}=O(1)$. The boundary lengths of all the collars are uniformly bounded, including $U_{\gamma,n}$ by Lemma \ref{lem:collar}.

     Cut the geodesic representative of $\delta$ along these collars. The portions outside the collars have total length $O_\delta(1)$. If $\sigma$ is a crossing arc of $U_{\beta,n}$, then
     \[ |w(\sigma,\beta)|\,\ell_{y_n}(\beta)=O_\delta(1). \]
      Here $w(\sigma,\beta)$ denotes the winding number of $\sigma$ about $\beta$, as defined immediately before Lemma \ref{lem:winding}. For $\beta\neq\gamma$, this follows from the bounded geometry of the $\beta$-collar, while for $\beta=\gamma$ it follows from the fact that the Fenchel--Nielsen twist coordinate about $\gamma$ is fixed.
      
      After applying $m_{\beta,n}$ twists, the winding number becomes $w(\sigma,\beta)\pm m_{\beta,n}$, and hence
      \[ \Bigl||w(\sigma,\beta)\pm m_{\beta,n}|-m_{\beta,n}\Bigr|
         \,\ell_{y_n}(\beta) \leq |w(\sigma,\beta)|\ell_{y_n}(\beta) = O_\delta(1).
      \]
      Lemma \ref{lem:winding} therefore gives, uniformly in $n$ and $m_{\beta,n}$,
      \[ \ell_{y_n}(\sigma) = 2l_{\beta,n} + m_{\beta,n}\ell_{y_n}(\beta) + O_\delta(1) \]
      for the geodesic representative of the twisted relative homotopy class.
        
     Summing over the $i(\beta,\delta)$ crossings and the complementary
        arcs gives
        \[
        \ell_{y_n}(T_n\delta)
        \le
        \sum_{i=1}^p
        m_{i,n}\ell_{y_n}(\alpha_i)i(\alpha_i,\delta)
        +
        2l_n i(\gamma,\delta)
        +
        O_\delta(1),
        \]
        where the terms $2l_{\beta,n}i(\beta,\delta)$ for $\beta\neq\gamma$
        are absorbed into the error.
        
        For the reverse inequality, cut the geodesic representative of
        $T_n\delta$ along the same collar boundaries. The endpoints of the complementary pieces may differ from those of the corresponding pieces of $\delta$. Sliding them along the collar boundaries to the latter endpoints has total cost $O_\delta(1)$, since the boundary lengths are uniformly bounded and the number of endpoints depends only on $\delta$. After this adjustment, the complementary pieces may be replaced by those of $\delta$, and the resulting collar arcs have winding numbers $w(\sigma,\beta)\pm m_{\beta,n}$. By the same application of Lemma~\ref{lem:winding} we get
        \[
        \ell_{y_n}(T_n\delta)
        \ge
        \sum_{i=1}^p
        m_{i,n}\ell_{y_n}(\alpha_i)i(\alpha_i,\delta)
        +
        2l_n i(\gamma,\delta)
        -
        O_\delta(1).
        \]
        Combining the two inequalities proves the lemma. \qedhere
    
    \end{proof}
  
  \subsection{Proof of Theorem \ref{introthm:false}} 
  
  Recall that \[ Z^{(2)}([\xi]) = \left\{ [\eta]\in\PML: \exists[\lambda]\in\PML \text{ with } i(\xi,\lambda)=i(\eta,\lambda)=0 \right\}. \] This set is closed, being the projection of a closed subset of the compact space $\PML\times\PML$. 

  The proof of Theorem \ref{introthm:false} is also divided into two directions. Proposition \ref{prop:direction1} shows $Z^{(2)}\big( [\xi] \big) \subset \star_b^T \big([\xi]\big)$, while Proposition \ref{prop:direction2} shows $\star^T \big([\xi]\big) \subset Z^{(2)} \big( [\xi] \big)$.  Together these yield \[Z^{(2)} \big( [\xi] \big) \subset \star_b^T \big([\xi]\big) \subset \star^T\big( [\xi] \big) \subset Z^{(2)} \big( [\xi] \big),\] establishing equality throughout. 
  
  The main tool is the following lemma on multicurves. 
  
  \begin{lemma} \label{lem:false}
    
    Suppose $A$ and $B$ are weighted multicurves and $\gamma$ is a curve such that \[ i(A,\gamma)=i(B,\gamma)=0. \] Then there exist $x_n, z_n \in \T$ such that $x_n \to [A]$ and $z_n \to [B]$ and \[ \dT(x_n,z_n) - \dT(b,z_n) \to - \infty.\]
    
  \end{lemma}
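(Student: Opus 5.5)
The plan is to realize both $x_n$ and $z_n$ as multitwists of a single sequence $y_n$ that pinches $\gamma$. The pinching should dominate $\dT(b,z_n)$, while the twisting determines the limits in $\PML$ but costs only a logarithmic amount of \Teich distance.

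\textbf{Construction.} Write $A=\sum_i a_i\alpha_i$ and $B=\sum_j b_j\beta_j$ with $a_i,b_j>0$. Fix a pants decomposition $P$ containing $\gamma$, and let $y_n$ have fixed Fenchel--Nielsen coordinates except $\ell_{y_n}(\gamma)=\epsilon_n\to0$. Choose two scales $N_n\to\infty$ and $\epsilon_n\to 0$ satisfying
\[ \log N_n \ll \log(1/\epsilon_n) \ll N_n . \]
For instance, take $N_n=n^3$ and $\log(1/\epsilon_n)=n^2$. Then set
\[ x_n=\prod_i T_{\alpha_i}^{m_{i,n}}\,y_n,\qquad m_{i,n}=\Big\lfloor \frac{a_iN_n}{\ell_{y_n}(\alpha_i)}\Big\rfloor , \]
and define $z_n$ the same way from $B$. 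If $\gamma$ is a component of $A$ (or of $B$), its exponent is of order $N_n/\epsilon_n$; this case is allowed in Lemma \ref{lem:twist-length}.

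\textbf{Convergence.} By Lemma \ref{lem:twist-length}, for each curve $\delta$,
\[ \ell_{x_n}(\delta)=N_n\, i(A,\delta)+2l_n\, i(\gamma,\delta)+O_\delta(1). \]
The rounding error is $O(\ell_{y_n}(\alpha_i))=O(1)$ per crossing. The collar radius satisfies $l_n\asymp\log(1/\epsilon_n)=o(N_n)$. Hence $N_n^{-1}\ell_{x_n}(\delta)\to i(A,\delta)$, so $x_n\to[A]$. The same argument gives $z_n\to[B]$. Lemma \ref{lem:twist-length} is stated for $T_n\delta$ measured on $y_n$, which is $\ell_{x_n}(\delta)$ up to the sign convention for the action; this causes no difficulty.

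\textbf{Distance estimates.} The multitwist is supported in the disjoint standard collars of the $\alpha_i$. On each collar I use a local quasiconformal twist map from Lemma \ref{lem:twist}. For $\alpha_i\neq\gamma$, whose lengths are bounded, part (i) applies. For $\gamma$, part (ii) applies with $|k|\ell_{y_n}(\gamma)\asymp N_n$. Because the supports are disjoint, these maps glue with dilatation equal to the maximum of the local dilatations, which is $\prec N_n^2$. This gives
\[ \dT(x_n,y_n),\ \dT(y_n,z_n)\le \log N_n+O(1). \]
For the lower bound, Kerckhoff's formula together with (\ref{eqn:maskit}) gives
\[ \dT(b,y_n)\ge \tfrac12\log\frac{\Ext_b(\gamma)}{\Ext_{y_n}(\gamma)}\ge \tfrac12\log(1/\epsilon_n)-O(1). \]
Combining these with the triangle inequality,
\[ \dT(x_n,z_n)-\dT(b,z_n)\le \dT(x_n,y_n)+2\dT(y_n,z_n)-\dT(b,y_n)\le 3\log N_n-\tfrac12\log(1/\epsilon_n)+O(1)\to-\infty . \]

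\textbf{Main obstacle.} I expect the delicate step to be the uniform quasiconformal bound for the multitwist when $\gamma$ itself is among the twisted curves. There the twist count is huge, of order $N_n/\epsilon_n$, and one must use the thin-collar estimate of Lemma \ref{lem:twist}(ii) rather than (i). One must also check that the local models in the disjoint collars glue with dilatation equal to the maximum, not the product. If gluing proves awkward, the fallback is to apply the twists one curve at a time and add the distances; this costs only a factor equal to the number of components in the $\log N_n$ term, which is harmless given the choice of scales.
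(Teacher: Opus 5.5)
Your proposal is correct and is essentially the paper's proof. You pinch $\gamma$ with $\log(1/\epsilon_n)=n^2$, reach $[A]$ and $[B]$ by multitwists of $y_n$ whose $\dT$-cost is only logarithmic in the twisting scale (Lemma \ref{lem:twist}), read off the limits from Lemma \ref{lem:twist-length}, and get the lower bound from (\ref{eqn:maskit}) and Kerckhoff's formula. The differences are cosmetic: the paper uses $k_n=e^n$ instead of $N_n=n^3$, bounds $\dT(b,z_n)$ directly via $\ell_{z_n}(\gamma)=\ell_{y_n}(\gamma)$ rather than through $\dT(b,y_n)$, and bounds the multitwist distance by applying the twists one at a time with the triangle inequality, which is exactly your fallback.
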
 

  \begin{proof}
    Choose a pants decomposition containing $\gamma$, and let
    $y_n\in \T$ have fixed Fenchel--Nielsen coordinates except
    \[
    \ell_{y_n}(\gamma)=e^{-n^2}.
    \]
    Set $k_n=e^n$, and write
    \[
    A=\sum_{i=1}^p a_i\alpha_i,
    \qquad
    B=\sum_{j=1}^q b_j\beta_j.
    \]
    Define
    \[
    m_{i,n}
    =
    \left\lfloor
    \frac{a_i k_n}{\ell_{y_n}(\alpha_i)}
    \right\rfloor,
    \qquad
    r_{j,n}
    =
    \left\lfloor
    \frac{b_j k_n}{\ell_{y_n}(\beta_j)}
    \right\rfloor,
    \]
Note that 
\begin{equation}\label{eq:floorfunction}
0 \leq a_ik_n - m_{i,n} \ell_{y_n}(\alpha_i) < \ell_{y_n}(\alpha_i)
\qquad
\text{ and }
\qquad
0 \leq b_jk_n - r_{j,n} \ell_{y_n}(\beta_j) < \ell_{y_n}(\beta_j)
\end{equation}
We consider the following sequence in $\T$
    \[
    x_n=\prod_i T_{\alpha_i}^{-m_{i,n}}y_n,
    \qquad
    z_n=\prod_j T_{\beta_j}^{-r_{j,n}}y_n.
    \]
    
    We first claim that $x_n\to[A]$ and $z_n\to[B]$. Since every
    component of $A$ and $B$ is disjoint from $\gamma$, its $y_n$-length
    is uniformly bounded unless it is $\gamma$ itself. Hence by Eq~\ref{eq:floorfunction}
    \[
    m_{i,n}\ell_{y_n}(\alpha_i)=a_i k_n+O(1),
    \qquad
    r_{j,n}\ell_{y_n}(\beta_j)=b_j k_n+O(1).
    \]
    The collar radius of $\gamma$ is $O(n^2)$, so Lemma \ref{lem:twist-length} gives, for
    every curve $\delta$,
    \[
    \ell_{x_n}(\delta)
    =
    k_n i(A,\delta)+O_\delta(n^2+1),
    \qquad
    \ell_{z_n}(\delta)
    =
    k_n i(B,\delta)+O_\delta(n^2+1).
    \]
    Since $n^2/k_n\to0$, it follows that
    \[
    x_n\to[A],
    \qquad
    z_n\to[B].
    \]
    
    We next estimate their \Teich distance. For every twisting
    curve $\rho\neq\gamma$, we have
    $\ell_{y_n}(\rho)$ is uniformly bounded above and away from $0$, while for $\rho=\gamma$ the corresponding twist exponent $s_n$ satisfies
    \[
    s_n\ell_{y_n}(\gamma)=O(k_n).
    \]
    Thus Lemma \ref{lem:twist} and the triangle inequality give
    \[
    \dT(x_n,z_n) \le \dT(x_n,y_n)+\dT(y_n,z_n)
    =
    O(\log k_n)=O(n).\]
    
    On the other hand, all twists used to construct $z_n$ fix $\gamma$,
    so
    \[
    \ell_{z_n}(\gamma)=e^{-n^2}.
    \]
    By (\ref{eqn:maskit}) and Kerckhoff's formula,
    \[
    \dT(b,z_n)
    \ge
    \frac12
    \log
    \frac{\Ext_b(\gamma)}{\Ext_{z_n}(\gamma)}
    \geq
    \frac{n^2}{2}-O(1).
    \]
    Consequently,
    \[
    \dT(x_n,z_n)-\dT(b,z_n)
    \le
    O(n)-\frac{n^2}{2}+O(1)
    \longrightarrow -\infty.
    \]
    \end{proof}

  \begin{proposition} \label{prop:direction1}
      $Z^{(2)} \big( [\xi] \big) \subset \star_b^T \big( [\xi] \big)$. 
  \end{proposition}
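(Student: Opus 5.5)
The plan is to reduce to Lemma \ref{lem:false} by approximation and then apply the semicontinuity Lemma \ref{lem:semicontinuity}. Let $[\eta]\in Z^{(2)}\big([\xi]\big)$, so there is $\lambda\in\ML^*$ with $i(\xi,\lambda)=i(\eta,\lambda)=0$. By Lemma \ref{lem:non-filling} applied to $\lambda$, there is a curve $\gamma$ with $i(\lambda,\gamma)=0$ such that any $\mu$ with $i(\mu,\gamma)>0$ has $i(\mu,\lambda)>0$. Hence $i(\xi,\gamma)=i(\eta,\gamma)=0$. So we may replace $\lambda$ by a single curve $\gamma$ disjoint from both $\xi$ and $\eta$.

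Next, approximate $[\xi]$ and $[\eta]$ by weighted multicurves disjoint from $\gamma$. Both $\xi$ and $\eta$ are supported in the complement $S\setminus\gamma$, possibly together with $\gamma$ itself as a component. I would approximate each minimal component separately. A closed-curve component is kept as is. A minimal non-curve component lives in a subsurface $Y$ disjoint from $\gamma$, and is approximated in $\mathcal{ML}(Y)$ by weighted multicurves in $Y$; this can be done by density of weighted curves in $\mathcal{ML}(Y)$, or by Theorem \ref{thm:simultaneous} applied within $Y$. Summing the approximations gives weighted multicurves $A_n,B_n$ with $i(A_n,\gamma)=i(B_n,\gamma)=0$, $[A_n]\to[\xi]$ and $[B_n]\to[\eta]$. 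The cleanest way to phrase this is to note that the set $\{\mu\in\ML: i(\mu,\gamma)=0\}$ is the closure of the weighted multicurves in it. This holds because it is identified with measured laminations on $S\setminus\gamma$ plus multiples of $\gamma$, and weighted multicurves are dense there.

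For each $n$, Lemma \ref{lem:false} applied to $A_n$, $B_n$, $\gamma$ gives sequences $x_{n,k}\to[A_n]$ and $z_{n,k}\to[B_n]$ with
\[ \dT(x_{n,k},z_{n,k})-\dT(b,z_{n,k})\to-\infty \]
as $k\to\infty$, so the $\liminf$ is negative. Since $\TC$ is metrizable, being a closed ball, Lemma \ref{lem:semicontinuity} yields $[\eta]\in\star_b^T\big([\xi]\big)$.

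The main obstacle is the approximation step: producing multicurves disjoint from the fixed curve $\gamma$ that converge projectively. This needs density of weighted multicurves in the sublocus $\{i(\cdot,\gamma)=0\}$, which is a statement about $\ML$ of the possibly disconnected subsurface $S\setminus\gamma$. If citing it directly seems insufficient, I would argue via train tracks on $S\setminus\gamma$, where rational weights are dense among the weights carried.
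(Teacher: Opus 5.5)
\textbf{There is a genuine gap in your first step.} Lemma \ref{lem:non-filling} applies only to a \emph{non-filling} lamination, and you apply it to the witness $\lambda$ without checking this. The step fails exactly when $\xi$ is filling.

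A filling measured lamination is minimal. So if $\xi$ is filling, any $\lambda$ with $i(\xi,\lambda)=0$ has $|\lambda|=|\xi|$ and is itself filling. Likewise $|\eta|=|\xi|$ for every $[\eta]\in Z^{(2)}([\xi])$. In this case no curve at all is disjoint from $\xi$, so there is no fixed $\gamma$ to feed into Lemma \ref{lem:false}, and your reduction collapses.

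The case is not vacuous. If $[\eta]=[\xi]$, the conclusion is trivial, since $V\cap\T\subset H_b(V,0)$. But when $\xi$ is filling and not uniquely ergodic (possible in complexity at least two), $[\eta]$ can be a different projective measure on the same support. You still have to show such an $[\eta]$ lies in $\star_b^T([\xi])$. Approximating $\xi$ and $\eta$ independently by weighted multicurves does not help, because nothing forces the two approximants to share a disjoint curve.

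\textbf{The missing idea is to let the auxiliary curve depend on $n$.} By Theorem \ref{thm:simultaneous}, there are weighted multicurves $A_n,B_n$ with the \emph{same} support such that $[A_n]\to[\xi]$ and $[B_n]\to[\eta]$. Take $\gamma_n$ to be any component of this common support, so $i(A_n,\gamma_n)=i(B_n,\gamma_n)=0$. Lemma \ref{lem:false} explicitly allows $\gamma$ to be a component of the multicurves. Lemma \ref{lem:semicontinuity} only needs separating sequences for each fixed $n$. So the rest of your argument goes through unchanged. This is exactly how the paper treats the case $i(\xi,\eta)=0$ with both laminations filling.

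Your argument is correct in all remaining cases, namely whenever $\xi$ (equivalently $\eta$) is non-filling. This includes your justification of the density of weighted multicurves in $\{\mu : i(\mu,\gamma)=0\}$, and there it agrees with the paper.
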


  \begin{proof}
    
    Let $[\eta]\in Z^{(2)}([\xi])$. We will construct weighted
    multicurves $A_n,B_n$ and curves $\gamma_n$ such that
    \[
    [A_n]\to[\xi],\qquad [B_n]\to[\eta],
    \]
    and
    \[
    i(A_n,\gamma_n)=i(B_n,\gamma_n)=0.
    \]

    First suppose $i(\xi,\eta)=0$. If $\xi$ is non-filling, then
    Lemma \ref{lem:non-filling}, applied with $\lambda=\xi$, gives a curve $\gamma$ such
    that $i(\xi,\gamma)=0$ and
    \[
    i(\mu,\gamma)>0\Longrightarrow i(\mu,\xi)>0, \quad \forall \mu\in\ML.
    \]
    Since $i(\eta,\xi)=0$, we also have $i(\eta,\gamma)=0$. 
    The same conclusion follows by interchanging $\xi$ and $\eta$ if
    $\eta$ is non-filling. Weighted multicurves with zero intersection
    with $\gamma$ are dense in $Z([\gamma])$, so we may choose
    $A_n,B_n$ such that
    \[
    [A_n]\to[\xi],\qquad [B_n]\to[\eta],
    \qquad
    i(A_n,\gamma)=i(B_n,\gamma)=0.
    \]
    In this case set $\gamma_n=\gamma$.

    It remains to consider the case where $\xi$ and $\eta$ are filling.
    Since $i(\xi,\eta)=0$, they have the same support.
    By Theorem \ref{thm:simultaneous}, there are weighted multicurves $A_n$ and $B_n$
    with the same support such that
    \[
    [A_n]\to[\xi],\qquad [B_n]\to[\eta].
    \]
    Let $\gamma_n$ be any component of their common support. Then
    \[
    i(A_n,\gamma_n)=i(B_n,\gamma_n)=0.
    \]

    Now suppose $i(\xi,\eta)>0$. By the definition of
    $Z^{(2)}([\xi])$, there is $\lambda\in\ML^*$ such that
    \[
    i(\xi,\lambda)=i(\eta,\lambda)=0.
    \]
    The lamination $\lambda$ is non-filling, for otherwise
    $i(\xi,\eta)=0$. By Lemma \ref{lem:non-filling} there is a curve $\gamma$ such that
    \[
    i(\xi,\gamma)=i(\eta,\gamma)=0.
    \]
    As above, choose weighted multicurves $A_n,B_n$ with
    \[
    [A_n]\to[\xi],\qquad [B_n]\to[\eta],
    \qquad
    i(A_n,\gamma)=i(B_n,\gamma)=0,
    \]
    and set $\gamma_n=\gamma$.
    
    Thus in every case we have weighted multicurves $A_n,B_n$ and
    curves $\gamma_n$ satisfying
    \[
    [A_n]\to[\xi],\qquad [B_n]\to[\eta],
    \qquad
    i(A_n,\gamma_n)=i(B_n,\gamma_n)=0.
    \]
    For each $n$, Lemma \ref{lem:false} gives sequences
    $x_{n,k},z_{n,k}\in \T$ such that
    \[
    x_{n,k}\to[A_n],\qquad z_{n,k}\to[B_n],
    \]
    and
    \[
    d_T(x_{n,k},z_{n,k})-d_T(b,z_{n,k})\to-\infty
    \]
    as $k\to\infty$. Lemma \ref{lem:semicontinuity} therefore gives
    \[
    [\eta]\in\star_b^T \big( [\xi] \big). \qedhere
    \]
    \end{proof}

  \begin{proposition} \label{prop:direction2}
      $\star^T \big( [\xi] \big) \subset Z^{(2)} \big( [\xi] \big) $. 
  \end{proposition}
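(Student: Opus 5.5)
We argue by contraposition, in the same spirit as Corollary \ref{cor:direction1}. Suppose $\xi$ and $\eta$ jointly fill $S$. The aim is to show that for all sequences $x_n\to[\xi]$ and $z_n\to[\eta]$,
\[
\dT(x_n,z_n)-\dT(b,z_n)\longrightarrow\infty .
\]
Granting this, Lemma \ref{lem:necc_star} gives $[\eta]\notin\star^T([\xi],C)$ for every $C\ge0$. Hence $\bigcup_C\star^T([\xi],C)\subset Z^{(2)}([\xi])$, and since $Z^{(2)}([\xi])$ is closed, taking closures yields $\star^T([\xi])\subset Z^{(2)}([\xi])$. It suffices to prove the divergence along subsequences. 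So assume, for contradiction, that after passing to a subsequence $\dT(x_n,z_n)-\dT(b,z_n)\le C$ for all $n$.

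Normalize with the compact section $E_b=\{\lambda\in\ML:\Ext_b(\lambda)=1\}$. By Kerckhoff's formula, $e^{2\dT(b,z_n)}=\max_{E_b}\Ext_{z_n}$. The standing assumption then becomes the pointwise inequality
\[
\Ext_{z_n}(\lambda)\le e^{2C}\,\Ext_{x_n}(\lambda)\,\max_{E_b}\Ext_{z_n}, \qquad \lambda\in\ML .
\]
We apply this to a well-chosen sequence $\nu_n\in E_b$ which is ``short'' on $x_n$. Take $\alpha_n$ to be a shortest curve on $x_n$, so that $\ell_{x_n}(\alpha_n)$ is bounded by the Bers constant and $\Ext_{x_n}(\alpha_n)$ is bounded by (\ref{eqn:maskit}). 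Set $\nu_n=\alpha_n/\Ext_b(\alpha_n)^{1/2}$. Since $\alpha_n\to\infty$ in the curve complex, $\Ext_b(\alpha_n)\to\infty$, and therefore $\Ext_{x_n}(\nu_n)\to0$. Pass to a further subsequence with $\nu_n\to\nu\in E_b$. Because $\Ext_{x_n}(\nu_n)$ is bounded, Lemma \ref{lem:ext-hyp} gives $i(\xi,\nu)=0$. Joint filling then forces $i(\eta,\nu)>0$.

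We now need a lower bound for $\Ext_{z_n}(\nu_n)$ that is comparable to $\max_{E_b}\Ext_{z_n}$; combined with $\Ext_{x_n}(\nu_n)\to0$, this contradicts the displayed inequality. For this, I would use the Gardiner--Masur/Miyachi description of the limits of $\Ext_{z_n}(\cdot)^{1/2}/e^{\dT(b,z_n)}$. Specifically, along a subsequence this normalized function converges uniformly on compacta to a function $\mathcal E$ on $\ML$, and $\mathcal E$ is positive on every $\lambda$ with $i(\eta,\lambda)>0$. This positivity is to be derived from $z_n\to[\eta]$ via the bound (\ref{eqn:ext-hyp}) together with Proposition \ref{prop:convergence}. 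Granting it, we get $\Ext_{z_n}(\nu_n)\ge c\,\max_{E_b}\Ext_{z_n}$ with $c>0$ for large $n$, which gives the contradiction $c\le e^{2C}\Ext_{x_n}(\nu_n)\to0$.

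The main obstacle is exactly this last comparison. One must show that the maximum of $\Ext_{z_n}$ over $E_b$ is not much larger than $\Ext_{z_n}(\nu_n)$ when $i(\eta,\nu)>0$, that is, that the normalized extremal length of $z_n$ does not degenerate in the direction $\nu$. Hyperbolic length only gives $\Ext_{z_n}(\nu_n)\succ Q(z_n)^2$ through (\ref{eqn:ext-hyp}) and Proposition \ref{prop:convergence}. Comparing $Q(z_n)^2$ with $e^{2\dT(b,z_n)}$ is not automatic, because of thin-part twisting, which is precisely the phenomenon behind Lemma \ref{lem:false}. If the Gardiner--Masur route proves unwieldy, I would instead work with a short marking $\mu_n$ on $z_n$ and Minsky's product region estimates, controlling $\Ext_{z_n}$ on $E_b$ by contributions from individual curves of $\mu_n$. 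The plan there is to show that the maximizing directions of $\Ext_{z_n}$ accumulate only on laminations $\lambda$ with $i(\eta,\lambda)>0$, and then run the same contradiction.
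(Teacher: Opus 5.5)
Your reduction is fine: the contrapositive, Lemma \ref{lem:necc_star}, closedness of $Z^{(2)}([\xi])$, and rewriting $\dT(x_n,z_n)-\dT(b,z_n)\le C$ as a pointwise inequality all work. But there is a genuine gap exactly where you flag it, and the route you propose for filling it is false.

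\textbf{The comparison you need fails.} The bound $\Ext_{z_n}(\nu_n)\ge c\max_{E_b}\Ext_{z_n}$ does not follow from $z_n\to[\eta]$ and $i(\eta,\nu)>0$. Equivalently, the normalized limit $\mathcal E$ need not be positive off $Z([\eta])$. A counterexample is already in the paper:
\begin{itemize}
\item Choose curves $\beta,\nu$ with $i(\beta,\nu)>0$ and a curve $\gamma$ disjoint from both (possible once the complexity is at least two), and let $z_n$ be the sequence of Lemma \ref{lem:false} with $B=\beta$. Then $z_n\to[\beta]$.
\item Because $\gamma$ is pinched, $\max_{E_b}\Ext_{z_n}=e^{2\dT(b,z_n)}\ge c\,e^{n^2}$.
\item Since $\nu$ is disjoint from $\gamma$, $\Ext_{y_n}(\nu)$ is bounded by (\ref{eqn:maskit}). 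Lemma \ref{lem:twist}(i) then gives $\Ext_{z_n}(\nu)\le C e^{2n}$.
\item Hence $\Ext_{z_n}(\nu)/\max_{E_b}\Ext_{z_n}\to0$, so $\mathcal E(\nu)=0$ even though $i(\nu,\beta)>0$.
\end{itemize}
The maximum of $\Ext_{z_n}$ over $E_b$ can be dictated entirely by pinching a curve disjoint from $\eta$. This is the same mechanism that makes $Z^{(2)}$ strictly larger than $Z$, so no such comparison can come from the Thurston limit alone. Your product-region alternative runs into the same obstruction: knowing where the maximizers accumulate says nothing about $\Ext_{z_n}$ at your particular $\nu_n$.

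\textbf{A smaller error.} The claim $\Ext_{x_n}(\nu_n)\to0$ can fail, because the shortest curve on $x_n$ need not leave finite sets. For example, take $x_n=T_\alpha^n y$ with $\alpha$ the systole of $y$. Then $x_n\to[\alpha]$, one may take $\alpha_n=\alpha$ for all $n$, and $\Ext_{x_n}(\nu_n)$ is a positive constant.

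\textbf{The missing idea.} Use the symmetry of $\dT$ and put the test lamination on the $z_n$ side. Write $\dT(b,z_n)=\dT(z_n,b)=\frac12\log\max_{E_b}1/\Ext_{z_n}$, and let $\lambda_n\in E_b$ minimize $\Ext_{z_n}$. Then $\Ext_{z_n}(\lambda_n)=e^{-2\dT(b,z_n)}$ exactly, and no comparison is needed. For a subsequential limit $\lambda$:
\begin{itemize}
\item Lemma \ref{lem:ext-hyp} applied to $z_n$ gives $i(\eta,\lambda)=0$.
\item Joint filling then forces $i(\xi,\lambda)>0$.
\item Lemma \ref{lem:ext-hyp} applied to $x_n$ gives $\Ext_{x_n}(\lambda_n)\to\infty$.
\end{itemize}
Using $\dT(x_n,z_n)=\dT(z_n,x_n)$,
\[
\dT(x_n,z_n)-\dT(b,z_n)\ge\frac12\log\frac{\Ext_{x_n}(\lambda_n)}{\Ext_{z_n}(\lambda_n)}-\frac12\log\frac{1}{\Ext_{z_n}(\lambda_n)}=\frac12\log\Ext_{x_n}(\lambda_n)\to\infty,
\]
which is the paper's argument.
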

  
  \begin{proof}
  
    We prove the contrapositive. Suppose $[\xi],[\eta] \in \PML$ are jointly filling: equivalently, \[ i(\xi,\lambda)+i(\eta,\lambda)>0,  \quad \forall \lambda\in\ML^*.\] Let $x_n, z_n \in \T$ be any sequences with $x_n \to [\xi]$ and $z_n \to [\eta]$. Our goal is to show \[ \limsup_{n \to \infty} \dT(x_n,z_n) - \dT(b,z_n) = \infty.\] 

    Set \[ E_b = \{\lambda\in\ML:\Ext_b(\lambda)=1\}, \] a compact section of $\PML$. By symmetry and Kerckhoff's formula, \[ \dT(b,z_n) = \dT(z_n,b) = \frac12\log \max_{\lambda\in E_b} \frac{1}{\Ext_{z_n}(\lambda)}. \] 
    Choose $\lambda_n\in E_b$ realizing this maximum. Then \[ \Ext_{z_n}(\lambda_n) = e^{-2\dT(b,z_n)}. \] Since $z_n$ converges to a point of the Thurston boundary, it leaves every compact subset of $\T$, and hence $\dT(b,z_n)\to\infty$. Therefore $\Ext_{z_n}(\lambda_n)\to 0.$ 
    
    After passing to a subsequence, say $n_i$, compactness of $E_b$ gives \[ \lambda_{n_i}\to\lambda\in E_b. \] 

    Applying Lemma \ref{lem:ext-hyp} to $z_{n_i}\to[\eta]$ and $\lambda_{n_i}\to\lambda$, we obtain $i(\eta,\lambda)=0.$ Since $\xi$ and $\eta$ are jointly filling, it follows that  $i(\xi,\lambda)>0.$ Applying Lemma \ref{lem:ext-hyp} again, now to $x_{n_i}\to[\xi]$, gives $\Ext_{x_{n_i}}(\lambda_{n_i})\to \infty.$ 
    Hence 
    \[ 
    \begin{aligned} 
      \dT(x_{n_i},z_{n_i})-\dT(b,z_{n_i}) &\ge \frac12 \log \frac{\Ext_{x_{n_i}}(\lambda_{n_i})} {\Ext_{z_{n_i}}(\lambda_{n_i})} - \frac12 \log \frac{1}{\Ext_{z_{n_i}}(\lambda_{n_i})} \\ &= \frac12\log\Ext_{x_{n_i}}(\lambda_{n_i}) \longrightarrow\infty. 
    \end{aligned} 
    \] 
    
    Thus \[ \limsup_{n\to\infty} \bigl(\dT(x_n,z_n)-\dT(b,z_n)\bigr) = \infty. \] 

     Since the sequences $x_n\to[\xi]$ and $z_n\to[\eta]$ were arbitrary, Lemma \ref{lem:necc_star} implies that  \[ \star^T([\xi],C)\subset Z^{(2)}([\xi]), \quad \text{for every }C\ge0. \] Since $Z^{(2)}([\xi])$ is closed, we obtain \[ \star^T \big( [\xi] \big) = \overline{\bigcup_C \star^T \big( [\xi], C \big)}  \subset \overline{Z^{(2)} \big( [\xi] \big)} = Z^{(2)} \big( [\xi] \big). \qedhere\]
    
\end{proof}

 
  \bibliographystyle{alpha}
  \bibliography{main}
  \end{document}